\newtheorem{thm}{Theorem}[section]
\newtheorem{lem}[thm]{Lemma}
\newtheorem{prop}[thm]{Proposition}
\newtheorem{cor}[thm]{Corollary}
\newtheorem{rem}{Remark}
\newenvironment{proof}{\noindent\emph{Proof.}}{\hfill$\square$\medskip}
\newcommand{\Ls}{\Lambda_{\mathrm{sph}}}
\newcommand{\D}{\Delta}
\newcommand{\R}{\mathbb{R}}
\newcommand{\ve}{\varepsilon}
\renewcommand{\(}{\left(}
\renewcommand{\)}{\right)}
\newcommand{\la}{\lambda}
\newcommand{\La}{\Lambda}
\title{Normal conformal metrics on $\R^4$ with $Q$-curvature having power-like growth}
\author{Ali Hyder\thanks{Supported by the Swiss National Science Foundation Grant No. P400P2-183866.}\\ {\small Johns Hopkins University}\\ {\small \texttt{ahyder4@jhu.edu}} \and Luca Martinazzi\\  {\small Universit\`a di Padova} \\ {\small \texttt{luca.martinazzi@math.unipd.it}}}
\begin{document}
\maketitle
 
\begin{abstract}
Answering a question by M. Struwe \cite{StrLiou} related to the blow-up behaviour in the Nirenberg problem, we show that the prescribed $Q$-curvature equation
$$\Delta^2 u=(1-|x|^p)e^{4u}\text{ in }\R^4,\quad \Lambda:=\int_{\R^4}(1-|x|^p)e^{4u}dx<\infty$$
has normal solutions (namely solutions which can be written in integral form, and hence satisfy $\Delta u(x) =O(|x|^{-2})$ as $|x|\to \infty$) if and only if $p\in (0,4)$ and
$$\(1+\frac{p}{4}\)8\pi^2\le \Lambda <16\pi^2.$$

We also prove existence and non-existence results for the positive curvature case, namely for $\Delta^2 u=(1+|x|^p)e^{4u}$ in $\R^4$, and discuss some open questions.
\end{abstract}
 
\section{Introduction}
Given the prescribed $Q$-curvature equation on $\R^4$
\begin{equation}\label{eqQ}
\Delta^2 u = K e^{4u}\quad \text{in }\R^4,
\end{equation}
where $K\in L^\infty_{loc}(\R^4)$ is a given function, we say that $u$ is a normal solution to \eqref{eqQ} if $Ke^{4u}\in L^1(\R^4)$ and $u$ solves the integral equation
\begin{equation}\label{eqI}
u(x)=\frac{1}{8\pi^2}\int_{\R^4}\log\(\frac{|y|}{|x-y|}\) K(y)e^{4u(y)}dy +c,
\end{equation}
where $c\in \R$. 
It is well known that \eqref{eqI} implies \eqref{eqQ}, while the converse is not true, see e.g. \cite{CC,Lin}. 

If the right-hand side of \eqref{eqQ} is slightly more integrable, more precisely if $\log(|\cdot|)Ke^{4u}\in L^1(\R^4)$, then \eqref{eqI} is equivalent to 
\begin{equation}\label{eqIbis}
u(x)=\frac{1}{8\pi^2}\int_{\R^4}\log\(\frac{1}{|x-y|}\) K(y)e^{4u(y)}dy +c'.
\end{equation}
We will often use this second version for convenience.

A solution $u$ to \eqref{eqQ} has the geometric property that the conformal metric $e^{2u}|dx|^2$ on $\R^4$ has $Q$-curvature equal to $K$. For this reason equation \eqref{eqQ} has received a lot of attention in the last decades, including lower and higher order analogs, both when $K$ is constant and non-constant, see e.g. \cite{CK,CY, MarClass} and the references therein.

Part of the interest in solutions to \eqref{eqQ} arises from the Nirenberg problem, i.e. the problem of finding whether a given function on a given smooth Riemannian manifold (M, g), usually compact and without boundary, and of dimension $4$ in our case (similar considerations hold in any dimension), can be the Q-curvature of a conformal metric $e^{2u}g$. The variational methods or geometric flows used to study such problems, usually lead to lack of compactness issues, and upon suitable scaling at a blow-up point one often obtain as solution of \eqref{eqQ}. Moreover, because of a priori gradient and volume bounds, usually such solutions are normal, in the sense of \eqref{eqI}, with $Ke^{4u}\in L^1(\R^4)$, and if the prescribed curvature function is always positive and continuous, a blow-up argument will lead to a normal solution to \eqref{eqQ} with $K$ constant. Such solutions have been studied by \cite{Lin} (in other dimensions by \cite{CC, H-odd, H-clas, JMMX, MarClass, WX} and others), and always take the form, when $K=6$,
\begin{equation}\label{uspherical}
u(x)=\log\(\frac{2\lambda}{1+\lambda^2|x-x_0|^2}\),\quad \lambda>0,\quad x_0\in \R^4.
\end{equation}

More recently, though, Borer, Galimberti and Struwe \cite{BGS} studied a \emph{sign-changing} prescribed Gaussian curvature problem in dimension $2$, and, under the generic assumption that the prescribed curvature has only non-degenerate maxima, their blow-up analysis led possibly to either normal solutions to \eqref{eqQ} with $K>0$ constant, or to normal solutions to
\begin{equation}\label{eqG}
-\Delta u(z)=\(1+ A(x,x)\)e^{2u(x)},\quad \text{in }\R^2,
\end{equation}
where $A$ is a negative definite bilinear map. Later Struwe \cite{StrJEMS} showed that in fact \eqref{eqG} admits no \emph{normal} solutions.

A similar analysis was done by Galimberti \cite{Gal} and Ng\^o-Zhang \cite{NZ} in dimension $4$, which led to normal solution to \eqref{eqQ} with $K(x)=(1+A(x,x))$, again with $A(x,x)$ a negative definite bilinear form, assuming that the prescribed curvature $f$ has non-degenerate maxima. In this case, it was not possible to use the ideas of \cite{StrJEMS} to rule out the existence of such normal solutions. On the other hand in case the prescribed curvature function $f$ has derivatives vanishing up to order $3$, and $4$-th order derivative negative definite, blow-up leads to normal solutions to \eqref{eqQ} with $K(x)=(1+A(x,x,x,x))$ and $A$ a negative definite symmetric $4$-linear map, and in this case Struwe \cite{StrLiou} has recently proven that such solutions do not exist.

Then the non-degenerate case remained open, namely whether solutions to \eqref{eqQ} with $K(x)=(1+A(x,x))$,  $A$ bilinear and negative definite, do exist. In this paper we answer this question in the affirmative. In fact, focusing on the case $K(x)=(1-|x|^p)$ for any $p\in (0,4)$, we shall see that \eqref{eqQ} has a normal solution with prescribed total curvature $\Lambda$, if and only if $\Lambda$ lies in a certain range.

More precisely, for {$p>0$} we set
\begin{equation}\label{defLambda*}
\Ls:=16\pi^2,\quad \Lambda_{*,p}:=(4+p)2\pi^2=\(1+\frac{p}{4}\)\frac{\Ls}{2}. 
\end{equation}
The constant $\Ls= 6|S^4|$ is the total $Q$-curvature of  the sphere $S^4$.

We start with a non-existence result. 

\begin{thm}\label{thm0} Fix $p>0$. For any  $\Lambda\in ({-\infty},\Lambda_{*,p})\cup[\Ls,\infty)$ the problem
\begin{align}\label{eq-0}
\D^2 u=(1-|x|^p)e^{4u}\quad\text{in }\R^4,\quad \Lambda=\int_{\R^4}(1-|x|^p)e^{4u} dx,
\end{align}
admits no normal solutions. In particular, for $p\ge 4$, Problem \eqref{eq-0} admits no normal solutions.
\end{thm}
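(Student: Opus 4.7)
Let $u$ be a normal solution of \eqref{eq-0} with total curvature $\Lambda$ and set $\alpha := \Lambda/(8\pi^2)$. I plan to rule out the two intervals $(-\infty,\Lambda_{*,p})$ and $[\Ls,\infty)$ separately, using the integral representation \eqref{eqI}. The ``in particular'' statement for $p\ge 4$ is then immediate, since $\Lambda_{*,p}=2\pi^2(p+4)\ge 16\pi^2=\Ls$ in that case, so the union $(-\infty,\Lambda_{*,p})\cup[\Ls,\infty)=\R$.

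The common preliminary step is to extract from \eqref{eqI} an asymptotic of the form
$$\lim_{|x|\to\infty}\frac{u(x)}{\log|x|}=-\alpha,$$
so that $e^{4u(x)}\ge c_\ve|x|^{-4\alpha-\ve}$ for every $\ve>0$ and all $|x|$ sufficiently large. To carry this out I would first observe that the hypothesis $(1-|x|^p)e^{4u}\in L^1(\R^4)$ forces $|x|^pe^{4u}\in L^1(\R^4)$, since $|1-|x|^p|\ge |x|^p-1$ for $|x|\ge 1$, and hence $|K|e^{4u}\in L^1$. Splitting the integral in \eqref{eqI} over $\{|y|\le |x|/2\}$, $\{|x|/2<|y|<2|x|\}$ and $\{|y|\ge 2|x|\}$, and using the bounds $\log(|y|/|x-y|)=\log|y|-\log|x|+O(1)$ on the first region, $|\log(|y|/|x-y|)|\le C|x|/|y|$ on the last, and local integrability of $\log|x-y|$ in the middle, one verifies the above limit.

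For $\Lambda\in(-\infty,\Lambda_{*,p})$ the inequality $\Lambda<\Lambda_{*,p}$ is equivalent to $4\alpha<p+4$, so one may pick $\ve>0$ with $4\alpha+\ve<p+4$ and obtain
$$\int_{|x|>1}|x|^pe^{4u}\,dx\ge c_\ve\int_1^\infty r^{p+3-4\alpha-\ve}\,dr=+\infty,$$
contradicting the integrability already established and ruling out normal solutions in this range.

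For $\Lambda\ge\Ls$ the decay of $e^{4u}$ is now strong enough that every relevant integral converges, and a separate, global obstruction is required. My plan is to apply a Pohozaev-type identity obtained by multiplying $\D^2 u=(1-|x|^p)e^{4u}$ by $x\cdot\nabla u+c_0$ (for a suitable constant $c_0$) and integrating over $B_R$. The interior of the identity produces
$$p\int_{B_R}|x|^pe^{4u}\,dx-4\int_{B_R}(1-|x|^p)e^{4u}\,dx=I(R),$$
where $I(R)$ is a boundary integral on $\partial B_R$ involving $u$ and its derivatives up to order three. Differentiating the integral representation yields matching asymptotic expansions for $\nabla u$, $\D u$ and $\nabla\D u$ at infinity, and I expect that the resulting value $\lim_{R\to\infty}I(R)$ is an explicit function of $\alpha$ having a sign that rules out $\alpha\ge 2$. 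Pinning down this sign, and thereby the contradiction with $\Lambda\ge\Ls$, is the main obstacle of the proof.
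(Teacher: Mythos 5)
Your overall strategy (asymptotics at infinity to kill $\Lambda<\Lambda_{*,p}$, a Pohozaev identity to kill $\Lambda\ge\Ls$) is exactly the paper's, but there are three genuine gaps. First, the pointwise limit $u(x)/\log|x|\to-\Lambda/8\pi^2$ does \emph{not} follow from the naive three-region splitting. The problem is the middle region: there the integrand contains $\log\bigl(\frac{1}{|x-y|}\bigr)\,|y|^p e^{4u(y)}$ on $B_1(x)$, and ``local integrability of $\log|x-y|$'' is useless without a bound on $|y|^pe^{4u(y)}$ near $y=x$; since $K<0$ there, this term pushes $u(x)$ \emph{down}, so it threatens precisely the lower bound you need. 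The available upper bound $u\le-\frac{\Lambda}{8\pi^2}\log|x|+O(1)$ controls it only when $\Lambda>2\pi^2 p$, which is not known a priori in the regime $\Lambda<\Lambda_{*,p}$ you are trying to exclude. The paper's Lemma \ref{lemasymp2} devotes a separate contradiction argument (Jensen plus H\"older on dyadic annuli) to establishing exactly the claim $\Lambda>2\pi^2p$ before concluding; without that step, or an averaged substitute (e.g. bounding $\fint_{B_{2R}\setminus B_R}u$ and applying Jensen to get $\int_{B_{2R}\setminus B_R}|x|^pe^{4u}\gtrsim R^{p+4-4\alpha-\ve}$, which would in fact suffice for your purpose), your first half is not a proof.

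Second, for $\Lambda\ge\Ls$ you explicitly leave the sign of the boundary term as ``the main obstacle''. The identity does close up correctly — it is Proposition \ref{poho-3} of the paper (following Prop.\ A.1 of \cite{HMM}), giving $\frac{\Lambda}{\Ls}(\Lambda-\Ls)=-\frac p4\int_{\R^4}|x|^pe^{4u}\,dx<0$ and hence $\Lambda<\Ls$ at once — but it is valid only under the decay hypothesis $R^{4+p}\max_{|x|=R}e^{4u}\to0$, which you must verify. From the upper bound $e^{4u}\le C|x|^{-\Lambda/2\pi^2}$ this holds when $\Lambda>\Lambda_{*,p}$ (and in particular when $\Lambda\ge\Ls$ and $p<4$), but at the borderline value $\Lambda=\Lambda_{*,p}$ with $p\ge4$ — a case that falls inside your interval $[\Ls,\infty)$ and is covered by neither of your two arguments — one only gets $R^{4+p}e^{4u}=O(1)$. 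The paper needs an extra Kelvin-transform argument (Lemma \ref{lemmatildeu}, showing $u(x)+(1+\frac p4)\log|x|\to-\infty$) to upgrade this to $o(1)$ and run the Pohozaev identity there. Your reduction of the case $p\ge4$ to ``the union is all of $\R$'' is correct as bookkeeping, but it silently relies on this borderline case being handled.
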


Theorem \ref{thm0} is based on a Pohozaev identity (Proposition \ref{poho-3}) and some asymptotic estimates at infinity.

\medskip

Based on a variational approach of A. Chang and W. Cheng \cite{CC}, together with a blow-up argument, we shall then prove the following existence result.

\begin{thm}\label{thm1} Let $p\in (0,4)$ be fixed. Then for every $\Lambda \in (\Lambda_{*,p},\Ls)$ there exists a (radially symmetric) \emph{normal} solution to Problem \eqref{eq-0}.
Such solutions (in fact, every normal solution to \eqref{eq-0}) have the asymptotic behavior
\begin{equation}\label{asymp}
u(x)= -\frac{\Lambda}{8\pi^2}\log|x| + C +O(|x|^{-\alpha}), \quad\text{as }|x|\to\infty,
\end{equation}
for every $\alpha\in [0,1]$ such that $\alpha<\frac{\Lambda-\Lambda_{*,p}}{2\pi^2}$, and
\begin{equation}\label{asymp3}
|\nabla^\ell u(x)|=O(|x|^{-\ell}), \quad \text{as }|x|\to\infty, \quad \ell=1,2,3.
\end{equation}
\end{thm}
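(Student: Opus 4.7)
The plan is to first establish existence of a radial normal solution by a variational argument in the spirit of Chang--Cheng \cite{CC}, implemented on a sequence of expanding balls and combined with a blow-up analysis to pass to the limit, and then to derive the asymptotics \eqref{asymp}--\eqref{asymp3} for \emph{any} normal solution directly from the integral representation \eqref{eqIbis}.

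For each large $R>0$ I would look for radial critical points in $H^2_0(B_R)$ of the functional
\begin{equation*}
J_R(u) = \frac{1}{2}\int_{B_R}(\D u)^2\, dx - \frac{\La}{4}\log\int_{B_R}(1-|x|^p)e^{4u}\,dx,
\end{equation*}
on the class $\{\int_{B_R}(1-|x|^p)e^{4u}\,dx>0\}$, yielding radial solutions $u_R$ of \eqref{eq-0} on $B_R$ with Navier boundary data and prescribed total curvature $\La$; coercivity from Adams' inequality is available in the sharp subcritical range $\La<\Ls=16\pi^2$. The main task is the limit $R\to\infty$, where both \emph{local concentration} and \emph{mass loss at infinity} must be excluded. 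A blow-up around any concentration point $x_0$ (necessarily with $K(x_0)=1-|x_0|^p>0$, i.e.\ $|x_0|<1$, otherwise the sign of $K$ prevents concentration), after rescaling $v_n(x)=u_{R_n}(x_n+\la_n x)+\log\la_n$ with $\la_n\to 0$, would produce a limit $v$ solving $\D^2 v=(1-|x_0|^p)e^{4v}$ on $\R^4$ whose total curvature, by the classification of \cite{Lin}, is at least $\Ls$, contradicting $\La<\Ls$. Mass loss at infinity is controlled via a Pohozaev-type identity (analogous to the one underlying Theorem \ref{thm0}) together with $\La>\La_{*,p}$, which ensures the necessary tail integrability. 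Consequently $u_{R_n}$ converges smoothly on compact sets to a radial normal solution of \eqref{eq-0} with $\int(1-|x|^p)e^{4u}\,dx=\La$.

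For the asymptotic behavior of any normal solution, decompose \eqref{eqIbis} as
\begin{equation*}
u(x) = -\frac{\La}{8\pi^2}\log|x| + C + \frac{1}{8\pi^2}\int_{\R^4}\log\frac{|x|}{|x-y|}(1-|y|^p)e^{4u(y)}\,dy.
\end{equation*}
A soft upper bound $u(x)\le -(\La-\ve)(8\pi^2)^{-1}\log|x|$ for $|x|\to\infty$ (obtained by a direct analysis of \eqref{eqIbis} exploiting the sign of $K$ outside $B_1$) yields $(1-|y|^p)e^{4u(y)}=O(|y|^{p-\La/(2\pi^2)})$, and hence $|y|^\alpha(1-|y|^p)e^{4u(y)}\in L^1(\R^4)$ precisely for $\alpha<(\La-\La_{*,p})/(2\pi^2)$. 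Splitting the remaining integral into $\{|y|\le |x|/2\}$ (where $|\log(|x|/|x-y|)|\lesssim |y|/|x|$) and $\{|y|>|x|/2\}$ (where the decay of $Ke^{4u}$ directly carries the estimate) yields the $O(|x|^{-\alpha})$ remainder in \eqref{asymp}. The gradient bounds \eqref{asymp3} follow by differentiating \eqref{eqIbis} and applying the same two-region decomposition with the kernels $|\nabla_x^\ell \log|x-y|^{-1}|\lesssim |x-y|^{-\ell}$.
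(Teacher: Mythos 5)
Your treatment of the asymptotics is essentially sound: the upper bound $u(x)\le-\frac{\La}{8\pi^2}\log|x|+O(1)$ (Lemma \ref{lemasymp} in the paper) combined with your splitting of the kernel at $|y|=|x|/2$ does yield \eqref{asymp}, since $|y|^{\alpha}(1+|y|^p)e^{4u}\in L^1(\R^4)$ exactly for $\alpha<(\La-\La_{*,p})/(2\pi^2)$; the paper instead passes to the Kelvin transform and uses elliptic regularity plus Morrey--Sobolev embedding, but the two arguments are of comparable difficulty and your proof of \eqref{asymp3} coincides with the paper's.

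The existence part, however, contains a genuine gap at the concentration step. You assert that a blow-up limit $v$ of $\D^2 v=(1-|x_0|^p)e^{4v}$ with $e^{4v}\in L^1(\R^4)$ has total curvature at least $\Ls$ ``by the classification of \cite{Lin}''. Lin's theorem gives the \emph{opposite} inequality: such a solution has total curvature at most $\Ls$, with equality if and only if it is spherical, and there exist non-spherical solutions of arbitrarily small total curvature (those with $\D v\to -c_0<0$ at infinity). Hence no contradiction with $\La<\Ls$ follows unless you first show the blow-up limit is spherical, which requires the quantization machinery of \cite{Rob,MarOpen}; its hypotheses demand a uniform bound such as $\int_{B_1(x_0)}|\D u_{R_n}|\,dx\le C$, which for your Navier problems on $B_{R_n}$ must be extracted from the Green representation and is not automatic. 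Relatedly, the constraint $\int_{B_R}(1-|x|^p)e^{4u_R}\,dx=\La$ alone is compatible with $\int_{B_R}e^{4u_R}\,dx$ and $\int_{B_R}|x|^p e^{4u_R}\,dx$ both diverging, so you need a uniform bound on the negative part of the curvature; the Pohozaev identity you invoke for this carries boundary terms on $\partial B_R$ that are not obviously negligible, and the normality of the limit also has to be proved rather than inherited. This is precisely why the paper does not exhaust $\R^4$ by balls: it first produces genuinely \emph{normal} radial solutions $u_\la$ of the Gaussian-damped problem \eqref{eq-1} via the Chang--Chen argument on $S^4$ (Proposition \ref{prop1}), for which the monotonicity of $\D u_\la$ and of $t\mapsto\int_{B_t}\D^2u_\la\,dx$ yields the uniform decay $u_\la\le C-\frac{4+p+\delta}{4}\log|x|$ controlling the tail, and then carries out the rescaled limit $\la\downarrow0$; the heart of that argument (Lemma \ref{lem-mu}, showing that the rescaled limit still sees the term $-|x|^p$) is exactly the difficulty your outline does not engage with.
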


Theorems \ref{thm0} and \ref{thm1} leave open the case $\Lambda =\Lambda_{*,p}$, which is borderline from the point of view of integrability, in the sense that \eqref{asymp} is compatible with the integrability condition in $\eqref{eq-0}$ if $\Lambda>\Lambda_{*,p}$, but for $\Lambda=\Lambda_{*,p}$, \eqref{asymp} degenerates to
$$-\frac{\Lambda+o(1)}{8\pi^2}\log|x| +O(1)\le u(x)\le  -\frac{\Lambda}{8\pi^2}\log|x| + O(1), \quad\text{as }|x|\to\infty,$$
(see Lemma \ref{lemasymp} and Lemma \ref{lemasymp2}), which is not incompatible the integrability of $(1-|x|)e^{4u}$.

We shall study the case $\Lambda=\Lambda_{*,p}$ from the point of view of compactness: while solutions of Theorem \ref{thm1} must necessarily blow up as $\Lambda\uparrow \Ls$ (see Theorem \ref{thm2}), we find that such solutions remain compact as $\Lambda\downarrow\Lambda_{*,p}$.

\begin{thm}\label{thm1b}
Fix $p\in (0,4)$. Given any sequence $(u_k)$ of radial normal solutions to \eqref{eq-0} with $\Lambda=\Lambda_k\in [\Lambda_{*,p}, \Ls)$ and $\Lambda_k\to \bar\Lambda \in [\Lambda_{*,p}, \Ls)$, up to a subsequence $u_k\to \bar u$ locally uniformly, where $\bar u$ is a normal (and radial) solution to \eqref{eq-0} with $\Lambda=\bar\Lambda$.

In particular, choosing $\Lambda_k\downarrow \Lambda_{*,p}$ and $u_k$ given by Theorem \ref{thm1}, we obtain that \eqref{eq-0} has a normal solution $u$ also for $\Lambda=\Lambda_{*,p}$.
Moreover such $u$ satisfies
\begin{equation}\label{asymp2}
u(x) {\le}-\frac{\Lambda_{*,p}}{8\pi^2}\log|x| -\(\frac12+o(1)\)\log\log |x|, \quad \text{as }|x|\to\infty,
\end{equation}
and \eqref{asymp3}.
\end{thm}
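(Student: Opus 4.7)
The plan is to first establish $C^3_{\mathrm{loc}}$ compactness for $(u_k)$, identify the limit as a normal solution through dominated convergence in \eqref{eqI}, and then extract the sharp borderline asymptotic \eqref{asymp2}.

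\textbf{Compactness and identification of the limit.} I would first show $(u_k)$ does not blow up. Since each $u_k$ is radial and normal with $\Lambda_k$ bounded strictly below $\Ls$, standard mass quantization for $\D^2 u = Ke^{4u}$ rules out concentration: any blow-up at a point $x_0$ would produce, after rescaling, a Liouville bubble contributing mass $\Ls$ to $\int(1-|x|^p)e^{4u_k}dx$, which is possible only at a point with $K(x_0)>0$ (as no Liouville profile exists for $\D^2 v = -ce^{4v}$) and contradicts $\Lambda_k\to\bar\Lambda < \Ls$. Radiality and the sign structure of $K$ then give $\sup_k u_k(0)$ bounded (read off from $u_k(0) = c_k$ via \eqref{eqI}). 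Because $\Lambda_k$ remains bounded below by $\Lambda_{*,p}$, the asymptotic \eqref{asymp} of Theorem \ref{thm1} holds with $k$-uniform constants and yields an integrable majorant for $(1-|y|^p)e^{4u_k(y)}$. Elliptic regularity for the integral equation \eqref{eqI} then gives $\|u_k\|_{C^3(B_R)}\le C_R$; Arzel\`a--Ascoli combined with dominated convergence in \eqref{eqI} identifies the subsequential limit $\bar u$ as a radial normal solution of \eqref{eq-0} with $\Lambda=\bar\Lambda$.

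\textbf{Borderline asymptotic \eqref{asymp2}.} Taking $\Lambda_k\downarrow\Lambda_{*,p}$ with $u_k$ given by Theorem \ref{thm1}, the compactness step produces $\bar u$ normal at $\Lambda=\Lambda_{*,p}$. The preliminary bound $\bar u(x)\le -\frac{\Lambda_{*,p}}{8\pi^2}\log|x|+O(1)$ from Lemma \ref{lemasymp2} is inconsistent with $(1-|y|^p)e^{4\bar u}\in L^1(\R^4)$, since under it $|y|^p e^{4\bar u}\sim|y|^{-4}$ is log-divergent; extra decay must therefore be present. Setting $\phi(r)=\bar u(r)+\frac{4+p}{4}\log r$ and $\psi(s)=\phi(e^s)$, the integrability reads $\int^\infty e^{4\psi}\,ds<\infty$ and \eqref{asymp3} gives $|\psi'|\le C$, which by a mean-value argument yields the preliminary pointwise decay $\phi(r)\le-(\tfrac14+\delta)\log\log r$ for some $\delta>0$. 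The sharp coefficient $\tfrac12$ then comes from the biLaplacian structure: since $\D^2(-\tfrac{4+p}{4}\log r) = 0$ on $\R^4\setminus\{0\}$, the equation reduces asymptotically to $\D^2\phi\sim -r^{-4}e^{4\phi}$, and the explicit identity $\D^2(-\tfrac12\log\log r)\sim -2\,r^{-4}(\log r)^{-2}$ makes $-\tfrac12\log\log r + c_0$ (with $e^{4c_0}=2$) an asymptotic supersolution. A bootstrap on the integral representation \eqref{eqI}, or equivalently a comparison through the Poisson equation for $-\D\bar u$ (which is positive for large $|x|$ by integration of \eqref{eq-0} from infinity), then upgrades the preliminary bound to \eqref{asymp2}. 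The gradient bounds \eqref{asymp3} for $\bar u$ are inherited from $C^3_{\mathrm{loc}}$ convergence.

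\textbf{Main obstacle.} The crux is upgrading the $\tfrac14$ coefficient (from integrability alone) to the sharp $\tfrac12$ (which is dictated by the biLaplacian). Since $\D^2$ admits no direct maximum principle, the argument must either iterate on \eqref{eqI}, carefully tracking the multiplicative constant $e^{4c_0}=2$, or proceed through the positive second-order quantity $-\D\bar u$; ensuring the iteration stabilizes precisely at the correct fixed point without constant degradation is the delicate point.
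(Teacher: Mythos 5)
Your overall architecture (compactness, identification of the limit via the integral equation, then the borderline asymptotic) matches the paper's, but there are three genuine gaps, two of them at the heart of the matter.

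First, ruling out blow-up. You claim that a spherical bubble carrying mass $\Ls$ at the origin ``contradicts $\Lambda_k\to\bar\Lambda<\Ls$''. It does not, at least not directly: $\Lambda_k=\int_{\R^4}(1-|x|^p)e^{4u_k}dx$ is a \emph{signed} quantity, and the concentrated mass $\Ls$ near the origin could in principle be compensated by the negative contribution $-\int_{\{|x|>1\}}(|x|^p-1)e^{4u_k}dx$, leaving $\Lambda_k<\Ls$. The paper closes this loophole by combining quantization with the Pohozaev identity: \eqref{general Poho} yields the volume identity \eqref{V1}, $\int_{\R^4}e^{4u_k}dx=\Lambda_k+\frac{4\Lambda_k}{p\Ls}(\Ls-\Lambda_k)$, while blow-up plus radial monotonicity forces $\int_{\R^4}e^{4u_k}dx\to\Ls$; comparing the two gives $1=\frac{4\bar\Lambda}{p\Ls}\ge\frac{4+p}{2p}>1$ precisely because $p<4$. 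Without some such quantitative input your argument does not exclude concentration. Second, you assert that \eqref{asymp} holds ``with $k$-uniform constants'' and yields an integrable majorant for $(1-|y|^p)e^{4u_k}$. This is unjustified and is in fact the delicate point when $\Lambda_k\downarrow\Lambda_{*,p}$: the decay rate $|x|^{-\Lambda_k/2\pi^2}$ tends to the exactly borderline rate $|x|^{-(4+p)}$ for which $|x|^pe^{4u}$ is logarithmically divergent, so no uniform integrable majorant is available by these means. Ruling out loss of curvature at infinity is the content of the paper's Lemma \ref{lemLambdabar}, which passes to the Kelvin transform and derives a contradiction from the hypothetical concentration of the transformed mass at the origin; some substitute for that argument is needed.

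Third, the asymptotic \eqref{asymp2}. Your preliminary step is already shaky: integrability of $e^{4\psi}$ together with $|\psi'|\le C$ does not give $\psi(s)\le-(\frac14+\delta)\log s$ for a fixed $\delta>0$ (e.g.\ $\psi(s)=-\frac14\log s-\frac34\log\log s$ satisfies both hypotheses but not the conclusion). More importantly, the passage from the supersolution heuristic to the sharp constant $\frac12$ is exactly what you leave open (``ensuring the iteration stabilizes \dots is the delicate point''), so this part is a plan rather than a proof. The paper's Lemma \ref{lem-4.6} obtains the bound directly and without iteration: writing the Kelvin transform $\tilde u$, using its monotonicity near $0$ (which follows from $\Delta\tilde u\to+\infty$) to pull $e^{4\tilde u(x)}$ out of the integral, one gets $-\tilde u(x)+O(1)\ge\frac{e^{4\tilde u(x)}}{8}(\log(2|x|))^2$, and taking logarithms produces the coefficient $-\frac12$ in one step. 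I would encourage you to adopt the Kelvin-transform viewpoint for both the loss-of-mass-at-infinity step and the $\log\log$ asymptotic, and to bring the Pohozaev identity into the blow-up exclusion.
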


\noindent \textbf{Open problem} The solutions given by Theorems \ref{thm1} and \ref{thm1b} are radially symmetric by construction. It is open whether all normal solutions are radially symmetric (compare to \cite{Lin}), and whether they are unique, for every given $\Lambda\in [\Lambda_{*,p},\Ls)$.  

\medskip

\noindent\textbf{Open problem} Is the inequality in \eqref{asymp2} actually an equality (see Lemma \ref{lem-4.6} for a sharper version of \eqref{asymp2})?

\medskip

The proof of  Theorem \ref{thm1b} relies on blow-up analysis and quantization, as studied in \cite{Rob}, \cite{MarOpen}, which implies that in case of loss of compactness the total $Q$-curvature converges to $\Ls$, which is a contradiction if $\Lambda_k \to \bar \Lambda\in [\Lambda_{*,p},\Ls)$. An important part of this argument is to rule out loss of curvature at infinity, see Lemma \ref{lemLambdabar}.

On the other hand, as $\Lambda\uparrow \Ls$, the non-existence result of Theorem \ref{thm0} leaves open only two possibilities: loss of curvature at infinity, or loss of compactness. In the next theorem we show that the second case occurs.



\begin{thm}\label{thm2} Fix $p\in (0,4)$ and let $(u_k)$ be a sequence of radial normal solutions of \eqref{eq-0} with $\Lambda=\Lambda_k\uparrow \Ls$ (compare to Theorem \ref{thm1}) as $k\to\infty$. Then
\begin{equation}\label{concentration}
(1-|x|^p)e^{4u_k}\rightharpoonup \Ls \delta_0 \quad \text{as }k\to\infty,
\end{equation}
weakly in the sense of measures, and setting
$$\eta_k(x):= u_k(r_k x) -u_k(0)+\log 2,\quad r_k:=12 e^{-u_k(0)},$$ we have
\begin{equation}\label{convetak}
\eta_k(x)\to \log\(\frac{2}{1+|x|^2}\)\quad \text{locally uniformly in } \R^4.
\end{equation}
\end{thm}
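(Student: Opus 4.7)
The strategy combines the non-existence result Theorem~\ref{thm0} with a blow-up and quantization analysis in the spirit of \cite{MarOpen, Rob}. Since \eqref{eq-0} admits no normal solution at $\Lambda=\Ls$, the sequence $(u_k)$ cannot remain locally uniformly bounded as $\Lambda_k\uparrow\Ls$; blow-up is forced, and we identify the limiting bubble as the standard spherical one.

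\textbf{Step 1 (blow-up at the origin).} First I would establish $u_k(0)\to+\infty$. If $u_k$ were locally uniformly bounded along a subsequence, then \eqref{eqI} combined with the uniform mass bound $\Lambda_k\le\Ls$ and the sharp expansion \eqref{asymp} (whose parameters can be taken uniform in $k$) would yield local $C^4$-bounds on $u_k$, and a $C^4_{loc}$-subsequential limit would be a normal solution of \eqref{eq-0} at $\Lambda=\Ls$, contradicting Theorem~\ref{thm0}. Hence blow-up must occur. By radiality of $u_k$, the center of any standard bubble must lie at the origin: a bubble at a point $x_\infty$ with $|x_\infty|=R>0$ would, by $SO(4)$-invariance, come as an $S^3$-family of bubbles, carrying infinite total mass by Lin's quantization \cite{Lin} and thus contradicting $\Lambda_k\le\Ls$. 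Therefore $u_k(0)\to\infty$.

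\textbf{Step 2 (blow-up profile).} Set $r_k:=12\,e^{-u_k(0)}\to 0$ and $\eta_k(x):=u_k(r_kx)-u_k(0)+\log 2$, so that $\eta_k(0)=\log 2$ and
\[
\Delta^2\eta_k(x)=C_0\,(1-r_k^p|x|^p)\,e^{4\eta_k(x)},
\]
for a fixed constant $C_0>0$ determined by the choice of $r_k$. Since $r_k\to 0$, the prefactor tends to $1$ locally uniformly. The integral representation applied to $\eta_k$, together with the uniform $L^1$-bound on the rescaled curvature, yields local $C^4$-bounds on $(\eta_k)$. A subsequential $C^4_{loc}$-limit $\eta_\infty$ is then a radial normal solution of $\Delta^2\eta_\infty=C_0\,e^{4\eta_\infty}$ on $\R^4$ with $\eta_\infty(0)=\log 2$ and total mass at most $\Ls$. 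Lin's classification \cite{Lin} then forces $\eta_\infty(x)=\log\bigl(2/(1+|x|^2)\bigr)$, which gives \eqref{convetak}.

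\textbf{Step 3 (concentration of the mass).} A change of variables gives
\[
\int_{\{|x|<Rr_k\}}(1-|x|^p)\,e^{4u_k}\,dx=\int_{\{|y|<R\}}C_0\,(1-r_k^p|y|^p)\,e^{4\eta_k(y)}\,dy\xrightarrow[k\to\infty]{}\int_{\{|y|<R\}}C_0\,e^{4\eta_\infty(y)}\,dy,
\]
which in turn tends to $\Ls$ as $R\to\infty$, since the standard bubble carries total curvature $\Ls$. Combined with $\Lambda_k\to\Ls$, this yields $\int_{\{|x|\ge Rr_k\}}(1-|x|^p)\,e^{4u_k}\,dx\to 0$ first as $k\to\infty$ and then as $R\to\infty$. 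To upgrade this to the vague convergence $(1-|x|^p)e^{4u_k}\rightharpoonup\Ls\delta_0$ of \eqref{concentration}, I would use the uniform asymptotic expansion \eqref{asymp} to show that for every $\varepsilon>0$ there is $R_0>0$ with $\int_{\{|x|\ge R_0\}}|1-|x|^p|\,e^{4u_k}\,dx<\varepsilon$ uniformly in $k$, ruling out any mass loss at infinity.

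\textbf{Main obstacle.} The most delicate point is the mass bookkeeping: simultaneously excluding secondary concentration on spheres $|x|=R>0$ (which relies crucially on radiality of $u_k$ together with the mass quantum $\Ls$) and excluding mass escape to infinity (handled by a uniform version of the expansion \eqref{asymp} in Theorem~\ref{thm1}). Both steps rest on sharp asymptotic control of normal solutions, and should reduce, after some work, to standard blow-up/quantization arguments as developed in \cite{MarOpen, Rob}.
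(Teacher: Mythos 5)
Your overall strategy is the one the paper uses: blow-up at the origin is forced by the non-existence result of Theorem~\ref{thm0} (via Lemma \ref{ukgeqC} and the compactness argument of Lemma \ref{lemconv}), and the profile and quantization then come from \cite[Theorem 2]{MarOpen}, for which the key hypothesis is the uniform gradient bound \eqref{nablauk} obtained by differentiating the integral representation. Two remarks on Step 1: when you pass to the limit along a bounded subsequence you cannot assert directly that the limit is a normal solution ``at $\Lambda=\Ls$'' --- mass may escape to infinity --- but since $1-|x|^p<0$ there, Fatou's lemma gives a limit with total curvature $\tilde\Lambda\ge\Ls$, which still contradicts Theorem~\ref{thm0} (this is exactly how Lemma \ref{lemconv} is used in Lemma \ref{ukbdd}); and the digression about bubbles on spheres $|x|=R>0$ is unnecessary, since the $u_k$ are radially decreasing, so any blow-up point is the origin.

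The genuine gap is in Step 3. You propose to rule out loss of mass at infinity by a ``uniform version of \eqref{asymp}.'' This is not available: the constant $C$ in \eqref{asymp} depends on the solution and degenerates as $u_k$ blows up, and nothing in Theorem~\ref{thm1} gives uniformity of the tail estimate along a blowing-up sequence. The correct tool is the Pohozaev identity \eqref{poho}: it yields
\begin{equation*}
\int_{\R^4}|x|^p e^{4u_k}\,dx=\frac{4\Lambda_k}{p\,\Ls}\,(\Ls-\Lambda_k)\longrightarrow 0,
\end{equation*}
so the entire negative part of the curvature measure vanishes in the limit, and by \eqref{V1} one also gets $\int_{\R^4}e^{4u_k}dx\to\Ls$. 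Combined with the quantization $\int_{B_{Rr_k}}(1-|x|^p)e^{4u_k}dx\to\Ls$ and the monotonicity of $u_k$ (which forces $u_k\to-\infty$ locally uniformly away from the origin), this closes the mass bookkeeping and gives \eqref{concentration} without any uniform pointwise decay. (Note also that for weak-$*$ convergence tested against compactly supported functions no control at infinity is needed; the Pohozaev identity is what you need if you want convergence against bounded test functions.) A smaller point in Step 2: to conclude $\eta_\infty=\log\bigl(2/(1+|x|^2)\bigr)$ from Lin's classification you must exclude the non-spherical solutions with $\Delta\eta_\infty\to-c_0<0$, i.e.\ you must show the limit is \emph{normal}; this is precisely what \cite[Theorem 2]{MarOpen} packages (compare Lemma \ref{lem-mu} in the proof of Theorem \ref{thm1}), and it does not follow from the $C^4_{loc}$ convergence alone.
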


Having addressed the case $K(x)=1-|x|^p$, we now analize the case $K(x)=1+|x|^p$.

Similar to Theorems \ref{thm0}, \ref{thm1} and \ref{thm1b}, one can ask for which values of $\Lambda$ problem 
\begin{align}\label{eq-positive}
\D^2u=(1+|x|^p)e^{4u}\quad\text{in }\R^4,\quad  \Lambda:=\int_{\R^4} (1+|x|^p)e^{4u}dx<\infty
\end{align}
admits a normal solution. The following result gives a complete answer for $p\in (0,4]$ and a partial answer for $p>4$. Let $\Lambda_{*,p}$ be as in \eqref{defLambda*}.

\begin{thm}\label{thm-positive2}
For $p\in (0,4]$, Problem \eqref{eq-positive} has a normal solution if and only if 
\begin{align} \label{nece0}
\Ls <\Lambda<2\Lambda_{*,p}.
\end{align}
For $p>4$
\begin{align} \label{nece}
\Lambda_{*,p} <\Lambda<2\Lambda_{*,p}
\end{align}
is a necessary condition for the existence of normal solutions to \eqref{eq-positive}, and there exists $\ve_p>0$ such that
\begin{align} \label{nece2}
\Lambda_{*,p}+\ve_p <\Lambda<2\Lambda_{*,p}
\end{align}
is a necessary condition for the existence of \emph{radial} normal solutions to \eqref{eq-positive}. Finally, for $p>4$ and for every
\begin{align}\label{suffi}
\frac{p\Ls}4 <\Lambda<2\Lambda_{*,p}
\end{align}
there exists a radially symmetric normal solution to \eqref{eq-positive}.
\end{thm}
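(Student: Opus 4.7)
Every normal solution obeys the asymptotic expansion $u(x)=-\frac{\Lambda}{8\pi^2}\log|x|+O(1)$, so that $e^{4u(x)}\sim|x|^{-\Lambda/(2\pi^2)}$ at infinity, and integrability of $|x|^p e^{4u}$ there forces $\Lambda>\Lambda_{*,p}$. The Pohozaev identity of Proposition~\ref{poho-3} reads, for any normal solution of $\D^2 u=Ke^{4u}$,
$$\int_{\R^4}x\cdot\nabla K\cdot e^{4u}\,dx=\frac{\Lambda(\Lambda-\Ls)}{4\pi^2}.$$
Applied to $K=1+|x|^p$, for which $x\cdot\nabla K=p|x|^p$, this becomes
$$p\int_{\R^4}|x|^p e^{4u}\,dx=\frac{\Lambda(\Lambda-\Ls)}{4\pi^2}.$$
The left-hand side is strictly positive, hence $\Lambda>\Ls$, and strictly less than $p\Lambda$ (because $\int e^{4u}dx>0$), hence $\Lambda<\Ls+4\pi^2 p=2\Lambda_{*,p}$. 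Intersecting with $\Lambda>\Lambda_{*,p}$ yields \eqref{nece0} when $p\in(0,4]$ (as then $\Lambda_{*,p}\le\Ls$) and \eqref{nece} when $p>4$ (as then $\Lambda_{*,p}>\Ls$).

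\textbf{Refined necessity for radial solutions.} For $p>4$ and $u$ radial, the integral representation yields a sharper expansion $u(r)=-\frac{\Lambda}{8\pi^2}\log r+C+O(r^{-(\Lambda-\Lambda_{*,p})/(2\pi^2)})$. When $\Lambda$ is close to $\Lambda_{*,p}$ the tail $\int_r^\infty s^{p+3}e^{4u(s)}\,ds$ decays too slowly to be consistent with smoothness at the origin combined with the Pohozaev identity. Quantifying this clash via an iterated Pohozaev-type identity---e.g.\ testing the equation against a higher-order radial multiplier such as $|x|^2(x\cdot\nabla u)$, which produces an additional $|x|^{p+2}e^{4u}$ moment---should furnish a uniform gap $\varepsilon_p>0$ below which no radial normal solution exists.

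\textbf{Existence.} For $\Lambda$ in the range \eqref{nece0} (when $p\le 4$) or \eqref{suffi} (when $p>4$), I would construct a radial normal solution by adapting the variational argument of Theorem~\ref{thm1}, in the spirit of Chang--Cheng~\cite{CC}. Decomposing $u=v-\frac{\Lambda}{8\pi^2}\log(1+|x|^2)$, one minimizes an Adams--Moser--Trudinger-type functional
$$F_\Lambda(v)=\int_{\R^4}(\D v)^2\,dx-\frac{\Lambda}{4}\log\int_{\R^4}(1+|x|^p)e^{4u}\,dx$$
over radial $v$ in an appropriate weighted Sobolev space. Coercivity follows from $\Lambda<2\Lambda_{*,p}$ via a sharp Adams inequality tailored to the polynomial weight. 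The lower threshold is the concentration level of minimizing sequences: for $p\le 4$ concentration occurs at a finite point with mass $\Ls$, whereas for $p>4$ a minimizing sequence can spread at infinity via the rescaling $u_n(x)=v(x/r_n)-(1+p/4)\log r_n$, producing a limit equation $\D^2 v=|y|^p e^{4v}$ whose total mass equals $p\Ls/4$ by the Pohozaev identity for this scale-invariant problem. Showing that the infimum of $F_\Lambda$ lies strictly below both thresholds then yields a minimizer, hence a solution.

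\textbf{Main obstacle.} The most delicate step is deriving the quantitative gap $\varepsilon_p$ in the refined radial necessity: sharp ODE asymptotics and a second-order Pohozaev analysis must combine to produce a uniform bound. A genuinely open problem left open is to close the gap between \eqref{nece2} and \eqref{suffi} when $p>4$, where the variational method breaks down precisely because of the spreading-at-infinity scenario identified above.
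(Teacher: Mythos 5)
Your necessity argument for \eqref{nece0}--\eqref{nece} follows the paper's route (lower asymptotic bound plus integrability of $|x|^pe^{4u}$ gives $\La>\La_{*,p}$; the Pohozaev identity with $0<\int_{\R^4}|x|^pe^{4u}dx<\La$ gives $\Ls<\La<2\La_{*,p}$), but it rests on the unproved assertion that $u=-\frac{\La}{8\pi^2}\log|x|+O(1)$. For $K=1+|x|^p$ the integral representation only yields the \emph{lower} bound for free (Lemma \ref{T6L1}); the \emph{upper} bound is exactly what is needed to verify the decay hypothesis \eqref{cond-poho} before Proposition \ref{poho-3} may be invoked, and in the paper it costs a Lin-type pointwise estimate combined with Jensen and H\"older (Lemma \ref{T6L2}). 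This step cannot be skipped: without an upper bound on $u$ the boundary terms in the Pohozaev identity are not controlled.

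The other two parts contain genuine gaps. For the refined radial necessity \eqref{nece2}, your proposed ``iterated Pohozaev identity'' with multiplier $|x|^2(x\cdot\nabla u)$ is speculation: you give no mechanism producing a uniform $\ve_p$, and the paper uses nothing of the sort. Instead, \eqref{nece2} falls out of the existence machinery: radial normal solutions are uniquely parametrized by $\rho=u(0)$ (Theorem \ref{thm-positive}), the map $\rho\mapsto\La_\rho$ is continuous (Lemma \ref{ukcontinuous}) with $\La_\rho\to 2\La_{*,p}$ as $\rho\to-\infty$ and $\La_\rho\to\max\{\Ls,\tfrac{p}{4}\Ls\}$ as $\rho\to+\infty$; if radial solutions existed with $\La_k\downarrow\La_{*,p}$, then for $p>4$ both endpoint limits exceed $\La_{*,p}$, so $u_k(0)$ stays bounded, compactness yields a normal solution with $\La=\La_{*,p}$, contradicting the strict inequality in \eqref{nece}. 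For existence, the paper does not minimize a weighted Adams functional with the unbounded weight $1+|x|^p$ (Chang--Cheng is used only for the bounded curvature $(\la-|x|^p)e^{-|x|^2}$ in Proposition \ref{prop1}); it runs the shooting/continuity argument just described and then applies the intermediate value theorem between the two endpoint limits. The heart of the matter, which your sketch does not address, is the compactness Lemma \ref{ukcontinuous}: one must rule out escape of curvature to infinity, i.e.\ concentration of the Kelvin transform at the origin, and this is done via the blow-up classification of normal solutions to $\Delta^2\eta=|x|^{p_\infty}e^{4\eta}$ from \cite{HMM}, whose quantized mass $\(1+\tfrac{p_\infty}{4}\)\Ls$ forces a non-integrable singularity of the limit, a contradiction. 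Your claimed coercivity ``via a sharp Adams inequality tailored to the polynomial weight'' and your identification of the concentration thresholds are asserted rather than proven, so the existence statement and \eqref{nece2} remain unestablished in the proposal.
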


While the necessary condition \eqref{nece0}-\eqref{nece} follow from the Pohozaev identity, the existence part and the more restrictive condition \eqref{nece2} are based on blow-up analysis. To study blow-up at the origin we use again the methods of \cite{Rob} and \cite{MarOpen}, and to avoid vanishing of curvature at infinity, which can be seen as a form of blow-up at infinity or, equivalently, as blow-up of the Kelvin transform at the origin, we will use the blow-up analysis and classification result of \cite{HMM} for normal solutions of \eqref{eqQ} with $K(x)=|x|^p$.

\medskip

\noindent\textbf{Open problems} In the case $p>4$ it is not know whether the condition \eqref{suffi} is also necessary. This is also related to the problem of uniqueness/multiplicity  of solutions to \eqref{eq-positive} for a given $\Lambda$ (open also in the case $p\in (0,4]$), and to the problem of the existence of a minimal value of $\Lambda$ for which \eqref{eq-positive} admits a solution, in analogy with Theorem \ref{thm1b}.

\medskip

\noindent\textbf{Open problem} Every \emph{radial} solution to \eqref{eqQ} with $K(x)=1- |x|^p$, $p>0$ must have finite total curvature, namely $Ke^{4u}\in L^1(\R^4)$, see Proposition \ref{propfinite}. The same happens when $K=const>0$, but in this case Albesino \cite{Alb} has recently proven the existence of non-radial solutions $u$ with $Ke^{4u}\not\in L^1(\R^4)$. It would be interesting to see whether there exist non-radial solutions to \eqref{eqQ} with infinite total $Q$-curvature also in the case $K(x)=1-|x|^p$.

\medskip

\noindent\textbf{Open problem} Inspired by \cite{HD, HVol,MarVol}, can one find (non-normal) solutions to \eqref{eqQ} with $K=(1-|x|^p)$ and arbitrarily large but finite total $Q$-curvature $\Lambda=\int_{\R^4}Ke^{4u}dx$? In the case of $K=(1+|x|^p)$, using the methods from \cite{HMM} it should be possible to prove the upper bound $\Lambda<2\Lambda_{*,p}$ for (not necessarily normal) \emph{radial} solutions.

\section{Some preliminary results and proof of Theorem \ref{thm0}}

We start with a Pohozaev-type identity that will be used several times.

\begin{prop}[Pohozaev identity]\label{poho-3}
{Let $K(x)=(1\pm |x|^p)$} and let $u$ be a solution to the integral equation 
\begin{equation*}
u(x)= \frac{1}{8\pi^2}\int_{\R^4}  \log\left(\frac{|y|}{|x-y|}\right)K(y) e^{4u(y)} dy + c 
\end{equation*}
for some $c\in \R$, with $Ke^{4 u}\in L^1(\R^4)$ and $ |\cdot|^p e^{4 u} \in L^1(\R^4)$. If 
\begin{align}\label{cond-poho}
\lim_{R\to\infty} R^{4+p} \max_{|x|=R} e^{4u(x)}=0,
\end{align}
then, denoting $\Lambda:=\int_{\R^4}(1\pm|x|^p)e^{4u(x)} dx,$ we have
\begin{equation}\label{general Poho}
\frac{\Lambda}{\Ls}\left(\Lambda-\Ls\right) =\pm \frac{p}{4}\int_{\R^4}|x|^p e^{4u(x)}dx.
\end{equation}
\end{prop}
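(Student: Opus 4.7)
The plan is to test the equation $\Delta^2 u = Ke^{4u}$ against the conformal vector field $x\cdot\nabla u$, integrate over $B_R$, and let $R\to\infty$.

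On the biharmonic side, two integrations by parts combined with $\Delta(x\cdot\nabla u) = 2\Delta u + x\cdot\nabla\Delta u$ and the divergence identity $\int_{B_R} x\cdot\nabla f\,dx = R\int_{\partial B_R}f\,dS - n\int_{B_R}f\,dx$ (applied to $f=(\Delta u)^2$) produce a bulk remainder $(2-n/2)\int_{B_R}(\Delta u)^2\,dx$ that \emph{vanishes} precisely in dimension $n=4$. Hence
\begin{equation*}
\int_{B_R}\Delta^2 u\,(x\cdot\nabla u)\,dx = \int_{\partial B_R}\Big[(\partial_\nu\Delta u)(x\cdot\nabla u) - \Delta u\,\partial_\nu(x\cdot\nabla u) + \tfrac{R}{2}(\Delta u)^2\Big]dS.
\end{equation*}
Symmetrically, rewriting $e^{4u}(x\cdot\nabla u)=\tfrac14 x\cdot\nabla e^{4u}$ and integrating by parts once yields
\begin{equation*}
\int_{B_R}Ke^{4u}(x\cdot\nabla u)\,dx = -\int_{B_R}Ke^{4u}dx - \tfrac14\int_{B_R}(x\cdot\nabla K)\,e^{4u}dx + \tfrac{R}{4}\int_{\partial B_R}Ke^{4u}dS,
\end{equation*}
where $x\cdot\nabla K = \pm p\,|x|^p$, and hypothesis \eqref{cond-poho} ensures the boundary term is $O(R^{4+p}\max_{\partial B_R}e^{4u})=o(1)$. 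As $R\to\infty$ this side tends to $-\Lambda\mp\tfrac{p}{4}I$, with $I:=\int_{\R^4}|x|^pe^{4u}dx$.

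The remaining task is to compute the limit of the three biharmonic boundary integrals. I would first extract from the integral equation the asymptotic leading profile $u_0(x):=-\tfrac{\Lambda}{8\pi^2}\log|x|$, with matching bounds on $\nabla^\ell u - \nabla^\ell u_0$ for $\ell\le 3$, obtained by differentiating under the integral and splitting the integration into $\{|y|\le|x|/2\}$ and its complement (invoking the $L^1$ hypotheses on $Ke^{4u}$ and $|x|^p e^{4u}$). A key simplification is that $x\cdot\nabla u_0\equiv -\tfrac{\Lambda}{8\pi^2}$ is constant, so $\partial_\nu(x\cdot\nabla u_0)\equiv 0$ identically. Since $|\partial B_R|=2\pi^2 R^3$, a direct substitution of $u_0$ gives $\int_{\partial B_R}(\partial_\nu\Delta u_0)(x\cdot\nabla u_0)\,dS\to -\Lambda^2/(8\pi^2)$ and $\tfrac{R}{2}\int_{\partial B_R}(\Delta u_0)^2\,dS\to\Lambda^2/(16\pi^2)$, summing to $-\Lambda^2/\Ls$. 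Equating this with $-\Lambda\mp\tfrac{p}{4}I$ and rearranging delivers the identity $\frac{\Lambda}{\Ls}(\Lambda - \Ls) = \pm\frac{p}{4}I$.

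The main technical obstacle will be translating the integrability hypotheses and \eqref{cond-poho} into uniform enough pointwise decay estimates on $\nabla^\ell u$ for $\ell=1,2,3$ to ensure that the errors $\nabla^\ell(u-u_0)$ contribute negligibly to each of the three biharmonic boundary integrands in the $R\to\infty$ limit. This step is standard from the integral representation but must be tracked carefully derivative by derivative; in particular, $o(|x|^{-1})$ control on $\nabla u - \nabla u_0$ and $o(|x|^{-2})$ control on $\Delta u - \Delta u_0$ will be required.
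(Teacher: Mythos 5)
Your computation is correct and would yield \eqref{general Poho}, but it follows the classical ``differential'' Pohozaev route, whereas the paper proceeds quite differently: it invokes the integral-equation version of the identity from Proposition A.1 of \cite{HMM}, in which one substitutes the representation formula for $x\cdot\nabla u$ directly into $\int_{B_R}(x\cdot\nabla u)Ke^{4u}\,dx$ and symmetrizes the resulting double integral in $x$ and $y$ (using $\frac{x\cdot(x-y)+y\cdot(y-x)}{|x-y|^2}=1$). With that route the only things to check are that $R\int_{\partial B_R}|x|^pe^{4u}\,d\sigma\to 0$ and that the cross term $\int_{|x|\le R}\int_{|y|\ge R}\frac{|x+y|}{|x-y|}|x|^pe^{4u(x)}|y|^pe^{4u(y)}\,dy\,dx\to 0$, both of which follow from \eqref{cond-poho} and the $L^1$ hypotheses alone --- no asymptotics for $\nabla^\ell u$ are ever needed. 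Your approach instead requires the expansions $\nabla^\ell u=\nabla^\ell u_0+o(|x|^{-\ell})$, $\ell=1,2,3$, to evaluate the three biharmonic boundary integrals; this is genuinely more work, but it is doable under the stated hypotheses, since \eqref{cond-poho} gives $|K(y)|e^{4u(y)}=o(|y|^{-4})$ pointwise, which controls the contribution of $\{|x-y|\le |x|/2\}$ to the differentiated integral representation, while the $L^1$ hypotheses handle the regions $\{|y|\le |x|/2\}$ and $\{|y|,|x-y|\ge |x|/2\}$ (for the $\{|y|\le|x|/2\}$ piece one should split further at $|y|=\sqrt{|x|}$, since $|y|\,|K(y)|e^{4u}$ need not be integrable). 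Your boundary-term arithmetic ($-\Lambda^2/(8\pi^2)+\Lambda^2/(16\pi^2)=-\Lambda^2/\Ls$, with the middle term vanishing because $x\cdot\nabla u_0$ is constant) is right, as is the sign bookkeeping on the curvature side. In short: no gap, but the paper's symmetrization argument buys a shorter proof under weaker effective hypotheses, while yours is the self-contained classical argument.
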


\begin{proof} Following the proof of Proposition A.1 in \cite{HMM}, we need to show that as $R\to\infty $ $$R\int_{\partial B_R}|x|^pe^{4u}d\sigma\to0\quad\text{and }\int_{|x|\leq R}\int_{|y|\geq R}\frac{|x+y|}{|x-y|} |x|^pe^{4u(x)}|y|^pe^{4u(y)}dydx\to0.$$
By \eqref{cond-poho} the boundary term goes to $0$ as $R\to\infty$. For the double integral term we divide the domain of $B_R^c$ into   $B_{2R}^c$ and $B_{2R}\setminus B_R$. Now using $|\cdot|^pe^{4u}\in L^1(\R^4)$ and \eqref{cond-poho} we estimate 
\begin{align*}
\int_{|x|\leq R}\int_{R\leq|y|\leq 2R}\frac{|x+y|}{|x-y|} |x|^pe^{4u(x)}|y|^pe^{4u(y)}dydx &=  o\(\frac{1}{R^3}\int_{|x|\leq R}  |x|^pe^{4u(x)} \int_{R\leq |y| \leq2 R}\frac{ dy}{|x-y|} dx \)\\
 &=o(1),\quad \text{as }R\to\infty
\end{align*}
and
\begin{align*} \int_{|x|\leq R}\int_{|y|\geq 2R}\frac{|x+y|}{|x-y|} |x|^pe^{4u(x)}|y|^pe^{4u(y)}dydx &\leq  C \int_{|x|\leq R}  |x|^pe^{4u(x)}  dx \int_{ |y| \geq2 R} |y |^p e^{4u(y)}dy \\ & =o(1),\quad \text{as }R\to\infty.
\end{align*}
\end{proof}

Another basic tool often used is the Kelvin transform.

\begin{prop}\label{PKelvin} Let $u$ be a normal solution to \eqref{eqQ} with $K\in L^\infty_{loc}(\R^4)$ and $Ke^{4u}\in L^1(\R^4)$. Then the function 
\begin{equation}\label{defKelvin}
\tilde u(x)= u\(\frac{x}{|x|^2}\)-\alpha \log|x|,\quad \text{for }x\ne 0,\quad \alpha:=\frac{1}{8\pi^2}\int_{\R^4}Ke^{2u}dx,
\end{equation}
satisfies
$$\tilde u(x)=\frac{1}{8\pi^2}\int_{\R^4}\log\(\frac{1}{|x-y|}\)K\(\frac{y}{|y|^2}\)\frac{e^{4\tilde u(y)}}{|y|^{8-4\alpha}} dy +c,$$
namely $\tilde u$ is a normal solution to
$$\Delta^2\tilde u(x) =K\(\frac{x}{|x|^2}\)\frac{e^{4\tilde u}}{|x|^{8-4\alpha}}.$$
\end{prop}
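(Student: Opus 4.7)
The plan is to prove the claim by pure computation starting from the integral representation \eqref{eqI} satisfied by $u$, substituting $x\mapsto x/|x|^2$ on both sides, and then performing the change of variables $y=z/|z|^2$ on the right-hand side. I will need two elementary identities: the conformal identity
$$\Bigl|\tfrac{x}{|x|^2}-\tfrac{y}{|y|^2}\Bigr|=\frac{|x-y|}{|x|\,|y|},\qquad x,y\ne 0,$$
and the Jacobian $dy=|z|^{-8}\,dz$ under $y=z/|z|^2$.

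First I would write out $u(x/|x|^2)$ using \eqref{eqI} and then apply the change of variables $y=z/|z|^2$. The logarithmic kernel transforms as
$$\log\frac{|y|}{|x/|x|^2-y|}=\log\frac{1/|z|}{|x-z|/(|x|\,|z|)}=\log|x|-\log|x-z|,$$
while the definition of $\tilde u$ gives $u(z/|z|^2)=\tilde u(z)+\alpha\log|z|$, and hence
$$K(y)e^{4u(y)}\,dy = K\!\bigl(z/|z|^2\bigr)\,e^{4\tilde u(z)}\,|z|^{4\alpha-8}\,dz.$$
Putting these substitutions together yields
$$u\!\bigl(x/|x|^2\bigr) = \frac{\log|x|}{8\pi^2}\int_{\R^4}K\!\bigl(z/|z|^2\bigr)\frac{e^{4\tilde u(z)}}{|z|^{8-4\alpha}}\,dz \;-\;\frac{1}{8\pi^2}\int_{\R^4}\log|x-z|\,K\!\bigl(z/|z|^2\bigr)\frac{e^{4\tilde u(z)}}{|z|^{8-4\alpha}}\,dz + c.$$

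Next I would evaluate the first integral: by a second change of variables $w=z/|z|^2$, one gets $K(z/|z|^2)e^{4\tilde u(z)}|z|^{4\alpha-8}\,dz=K(w)e^{4u(w)}\,dw$, so the integral equals $\int_{\R^4}Ke^{4u}\,dx=8\pi^2\alpha$. This reduces the previous display to $u(x/|x|^2)=\alpha\log|x|+\frac{1}{8\pi^2}\int\log(1/|x-z|)K(z/|z|^2)|z|^{4\alpha-8}e^{4\tilde u(z)}\,dz+c$, and subtracting $\alpha\log|x|$ gives exactly the integral representation claimed for $\tilde u$. The conclusion that $\tilde u$ solves the stated biharmonic equation is then immediate from the fact that $-\frac{1}{8\pi^2}\log|x|$ is the fundamental solution of $\Delta^2$ on $\R^4$, applied to the new right-hand side $K(x/|x|^2)|x|^{4\alpha-8}e^{4\tilde u(x)}$.

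The computation is essentially routine; the only point requiring care is Fubini/absolute convergence when manipulating the logarithmic kernel and performing the two changes of variables. Here the hypothesis $Ke^{4u}\in L^1(\R^4)$ is exactly what guarantees that the transformed integrand $K(z/|z|^2)|z|^{4\alpha-8}e^{4\tilde u(z)}$ lies in $L^1(\R^4)$, as the identity $\int K(z/|z|^2)|z|^{4\alpha-8}e^{4\tilde u(z)}\,dz=8\pi^2\alpha$ shows. So the main (minor) obstacle is merely bookkeeping of the constants and confirming integrability for Fubini; no new idea is required beyond the conformal identity and the change-of-variables formula.
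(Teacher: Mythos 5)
Your proposal is correct and follows essentially the same route as the paper: both proofs substitute $x\mapsto x/|x|^2$ into the integral representation, change variables $y\mapsto y/|y|^2$ with Jacobian $|y|^{-8}$, and invoke the conformal identity $|x|\,|y|\,\bigl|\tfrac{x}{|x|^2}-\tfrac{y}{|y|^2}\bigr|=|x-y|$; the only cosmetic difference is that you split the kernel as $\log|x|-\log|x-z|$ and identify the coefficient of $\log|x|$ as $8\pi^2\alpha$, whereas the paper absorbs the $-\alpha\log|x|$ term into the kernel before transforming. (You also correctly read $\alpha$ as $\frac{1}{8\pi^2}\int_{\R^4}Ke^{4u}\,dx$, which is clearly the intended definition despite the exponent $2u$ in the statement.)
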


\begin{proof} Starting from \eqref{eqI}, with a change of variables, and using that $|x||y|\left|\frac{x}{|x|^2}-\frac{y}{|y|^2}\right|=|x-y|$, we obtain
\begin{equation}\label{Kelvin}
\begin{split}
\tilde u(x)&=\frac{1}{8\pi^2}\int_{\R^4}\log\(\frac{|y|}{|x|\left|\frac{x}{|x|^2}-y\right|}\)K\(y\)e^{4 u(y)}dy+c\\
&=\frac{1}{8\pi^2}\int_{\R^4}\log\(\frac{1}{|x||y|\left|\frac{x}{|x|^2}-\frac{y}{|y|^2}\right|}\)K\(\frac{y}{|y|^2}\)\frac{e^{4\tilde u(y)}}{|y|^{8-4\alpha}}dy+c\\
&=\frac{1}{8\pi^2}\int_{\R^4}\log\(\frac{1}{|x-y|}\)K\(\frac{y}{|y|^2}\)\frac{e^{4\tilde u(y)}}{|y|^{8-4\alpha}}dy+c.
\end{split}
\end{equation}
\end{proof}

We now start studying the asymptotic behavior of normal solutions to \eqref{eqQ} under various assumptions.

\begin{lem}\label{lemasymp} Let $u$ solve the integral equation
\begin{equation}\label{eqI2}
u(x)=\frac{1}{8\pi^2}\int_{\R^4}\log\(\frac{|y|}{|x-y|}\) K(y)e^{4u(y)}dy +c,
\end{equation}
where $K(y)\le 0$ for $|y|\ge R_0$, for a given $R_0\ge 0$, and $Ke^{4u}\in L^1$. Then we have
\begin{equation}\label{asymp4}
u(x)\le -\frac{\Lambda}{8\pi^2}\log|x| +O(1),\quad \text{as }|x|\to\infty,
\end{equation}
where
$$\Lambda=\int_{\R^4} K(y)e^{4u(y)}dy\in \R  .$$
\end{lem}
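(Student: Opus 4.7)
The plan is to match $u(x)$ against the prospective leading term $-\frac{\Lambda}{8\pi^2}\log|x|$ directly inside the integral representation. Since $\Lambda=\int_{\R^4} K e^{4u}\,dy$, equation \eqref{eqI2} can be rewritten as
$$u(x)+\frac{\Lambda}{8\pi^2}\log|x|-c=\frac{1}{8\pi^2}\int_{\R^4}\log\(\frac{|x||y|}{|x-y|}\)K(y)e^{4u(y)}\,dy,$$
so it is enough to show that the integral on the right is bounded from above as $|x|\to\infty$.

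I would then fix $R\ge\max(R_0,1)$ and split the integration over $B_R$ and $B_R^c$, exploiting the sign of $K$ differently on the two pieces. On $B_R$, for $|x|\ge 2R$ the triangle inequality gives $|x|/2\le|x-y|\le 2|x|$, so $\log\tfrac{|x|}{|x-y|}$ is uniformly bounded in $x$; this reduces the $B_R$-contribution to $\int_{B_R}\log|y|\,K(y)e^{4u(y)}\,dy+O(1)$, which is a finite constant by local boundedness of $Ke^{4u}$ (inherited from $K\in L^\infty_{loc}$ and the regularity of normal solutions) together with $\log|\cdot|\in L^1_{loc}(\R^4)$.

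On $B_R^c$ the assumption forces $Ke^{4u}\le 0$. Since $|x-y|\le|x|+|y|$ and $\min(|x|,|y|)\ge R\ge 1$,
$$\log\(\frac{|x||y|}{|x-y|}\)\ge\log\(\frac{|x||y|}{|x|+|y|}\)\ge\log\frac{R}{2},$$
i.e.\ the logarithm is bounded \emph{below} by a constant. Multiplying by the nonpositive $Ke^{4u}$ reverses the inequality, so the integrand is bounded above by $C|K(y)|e^{4u(y)}$; integrating against $Ke^{4u}\in L^1(\R^4)$ produces a uniform bound on the $B_R^c$-contribution. Summing the two pieces gives the desired $O(1)$ upper bound.

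The main subtlety is the logarithmic singularity at $y=x$, which lies inside $B_R^c$ once $|x|\ge 2R$. The only mechanism for taming it here is exactly the sign condition on $K$ outside $B_{R_0}$: it converts the problematic \emph{upper} bound on $\log\frac{|x||y|}{|x-y|}$ (which does not hold uniformly in $x,y$) into the benign \emph{lower} bound displayed above, after which everything is routine integration.
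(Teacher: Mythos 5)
Your proof is correct and follows essentially the same strategy as the paper's: both arguments hinge on the sign condition $K\le 0$ outside $B_{R_0}$ to neutralize the logarithmic singularity at $y=x$, use the elementary bound $|x-y|\le|x|+|y|$ in the far region, and treat a fixed ball around the origin separately (where $\log(|x|/|x-y|)=O(1)$). The only difference is cosmetic: you absorb $\frac{\Lambda}{8\pi^2}\log|x|$ into the kernel and use two regions where the paper uses three ($B_{|x|/2}(x)$, $B_{R_0}(0)$, and the rest), which slightly streamlines the bookkeeping without changing the substance.
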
 

\begin{proof} Choose $x$ such that $|x|\ge 2R_0$. Without loss of generality we can assume $R_0\ge 2$. Split $\R^4= A_1\cup A_2\cup A_3$ where
$$A_1=B_{\frac{|x|}{2}}(x),\quad A_2= B_{R_0}(0), \quad A_3=\R^4\setminus(A_1\cup A_2).$$
Using that
$$\log\(\frac{|y|}{|x-y|}\)\ge 0, \quad K(y)\le 0\quad \text{in }A_1,$$
we get
$$\int_{A_1}\log\(\frac{|y|}{|x-y|}\) K(y)e^{4u(y)}dy \le 0.$$
For $y\in A_2$ we have $\log\(\frac{|y|}{|x-y|}\)=-\log|x|+O(1)$ as $|x|\to\infty$, so that
\begin{align*}
\int_{A_2}\log\(\frac{|y|}{|x-y|}\) K(y)e^{4u(y)}dy& =(-\log|x|+O(1))\int_{A_2}Ke^{4u}dy\\
&= -\log|x|\int_{A_2}Ke^{4u}dy+O(1).
\end{align*}
For $|y|\ge R_0\ge 2$ and $|x-y|>\frac{|x|}{2}$ we have $|x-y|\le |x|+|y|\le |x||y|$ so that
$$\int_{A_3}\log\(\frac{|y|}{|x-y|}\) K(y)e^{4u(y)}dy \le - \log |x| \int_{A_3} K(y)e^{4u(y)}dy.$$
Summing up
\begin{align*}
u(x)&\le -\frac{1}{8\pi^2} \log|x|\int_{A_2\cup A_3} K(y)e^{4u(y)}dy+O(1)\\
&\le -\frac{\Lambda}{8\pi^2} \log|x|+O(1),
\end{align*}
where again we used that $K\le 0$ in $A_1$.
\end{proof}

\begin{cor}\label{cornonex} Given $p\in (0,4)$ there is no normal solution to \eqref{eq-0} for $\Lambda\ge \Ls=16\pi^2$.
\end{cor}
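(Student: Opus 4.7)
The plan is to combine the upper bound from Lemma \ref{lemasymp} with the Pohozaev identity of Proposition \ref{poho-3} to produce a sign contradiction. First I would observe that $K(y)=1-|y|^p\le 0$ for $|y|\ge 1$, so Lemma \ref{lemasymp} applies and yields $u(x)\le -\frac{\Lambda}{8\pi^2}\log|x|+O(1)$ as $|x|\to\infty$, whence $e^{4u(x)}\le C|x|^{-\Lambda/(2\pi^2)}$ for $|x|$ large.

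Next I would verify the hypotheses of Proposition \ref{poho-3} in order to apply the Pohozaev identity. The assumption $\Lambda\ge \Ls=16\pi^2$ forces $\Lambda/(2\pi^2)\ge 8$, so
$$R^{4+p}\max_{|x|=R}e^{4u(x)}\le CR^{4+p-\Lambda/(2\pi^2)}\le CR^{p-4}\to 0\quad\text{as }R\to\infty,$$
thanks to $p\in(0,4)$; the very same exponent bound gives $|\cdot|^p e^{4u}\in L^1(\R^4)$. Hence Proposition \ref{poho-3} yields
$$\frac{\Lambda}{\Ls}(\Lambda-\Ls)=-\frac{p}{4}\int_{\R^4}|x|^p e^{4u(x)}dx.$$
For $\Lambda\ge\Ls$ the left-hand side is nonnegative, while the right-hand side is strictly negative (since $p>0$ and $e^{4u}>0$): contradiction, so no normal solution can exist.

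The only delicate step, and really the heart of the argument, will be the verification of the decay condition \eqref{cond-poho}: both hypotheses $\Lambda\ge \Ls$ (pushing $\Lambda/(2\pi^2)$ up to at least $8$) and $p<4$ (making the residual exponent $p-4$ negative) enter precisely here, and if either failed the chain of inequalities would collapse. Once \eqref{cond-poho} is in place, the rest is a direct assembly of already-established tools.
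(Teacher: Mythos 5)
Your proposal is correct and follows exactly the paper's argument: apply Lemma \ref{lemasymp} to get the decay $e^{4u}\le C|x|^{-\Lambda/(2\pi^2)}$, check that this verifies hypothesis \eqref{cond-poho} of Proposition \ref{poho-3} (using $\Lambda\ge\Ls$ and $p<4$), and conclude from the sign of the two sides of \eqref{general Poho}. Your write-up merely makes explicit the exponent computation and the $L^1$ check that the paper leaves implicit.
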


\begin{proof} Assume that $u$ solves \eqref{eq-0} for some $\Lambda\ge \Ls$. Then, by Lemma \ref{lemasymp}, $u$ satisfies \eqref{asymp4}, which implies that assumption \eqref{cond-poho} in Proposition \ref{poho-3} is satisfied. Then \eqref{general Poho} implies $\Lambda <\Ls$, a contradiction.
\end{proof}

\begin{lem}\label{lemasymp2} Given $p>0$ let $u$ be a normal solution to \eqref{eq-0} for some $\Lambda\in \R$. Then $\Lambda\ge \Lambda_{*,p}$ and
\begin{equation}\label{asymp5}
u(x)= -\frac{\Lambda +o(1)}{8\pi^2}\log|x|\quad \text{as }|x|\to \infty.
\end{equation}
\end{lem}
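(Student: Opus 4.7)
The plan is to combine the upper bound of Lemma \ref{lemasymp} with a careful splitting of the integral representation to obtain the matching lower asymptotic, and then use the integrability of $|y|^pe^{4u}$ to deduce the lower bound on $\Lambda$.

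First, since $K(y)=1-|y|^p\le 0$ for $|y|\ge 1$, Lemma \ref{lemasymp} applies (with $R_0=1$) and yields the pointwise upper bound $u(x)\le -\frac{\Lambda}{8\pi^2}\log|x|+O(1)$ as $|x|\to\infty$. In particular $e^{4u(y)}\le C|y|^{-\Lambda/(2\pi^2)}$ for $|y|\ge 1$. Moreover, because $(1-|y|^p)e^{4u}\in L^1(\R^4)$, separating its positive and negative parts on $\{|y|\le 1\}$ and $\{|y|>1\}$ shows that both $e^{4u}$ and $|y|^pe^{4u}$ lie in $L^1(\R^4)$.

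Next, rewriting
$$u(x)-c+\frac{\Lambda}{8\pi^2}\log|x|=\frac{1}{8\pi^2}\int_{\R^4}\log\frac{|x||y|}{|x-y|}K(y)e^{4u(y)}dy=:\frac{J(x)}{8\pi^2},$$
I would show $J(x)=o(\log|x|)$ by splitting $\R^4=A_1\cup A_2\cup A_3$ with $A_1=\{|y|\le|x|/2\}$, $A_2=\{|x|/2<|y|<2|x|\}$, $A_3=\{|y|\ge 2|x|\}$. On $A_1$ one has $\log\frac{|x||y|}{|x-y|}=\log|y|+O(1)$, and on $A_3$ one has $\log\frac{|x||y|}{|x-y|}=\log|x|+O(1)$; using $|K|e^{4u}\in L^1$ and the tail estimates $\int_{A_3}|K|e^{4u}\to 0$, both contributions are $o(\log|x|)$ by dominated convergence. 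On the annulus $A_2$, I decompose $K=1-|y|^p$ and use that both $\int_{A_2}e^{4u}dy\to 0$ and $\int_{A_2}|y|^pe^{4u}dy\to 0$; writing $\log\frac{|x||y|}{|x-y|}=\log|x|+\log|y|-\log|x-y|$ with $\log|y|=\log|x|+O(1)$ on $A_2$, the first two terms contribute $O(\log|x|)\cdot o(1)$, so the only remaining piece is $-\int_{A_2}\log|x-y|K(y)e^{4u}dy$. The part over $\{|x-y|\ge 1\}$ is handled by $|\log|x-y||\le\log(3|x|)$ combined with the small annular mass.

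The main obstacle is the subregion $A_2\cap\{|x-y|<1\}$, where $\log|x-y|$ has a singularity. Here I would exploit the pointwise upper bound from Step 1 to estimate $e^{4u(y)}\le C|x|^{-\Lambda/(2\pi^2)}$ and $|K(y)|\le C|x|^p$, together with the fact that $\int_{|x-y|<1}|\log|x-y||dy$ is finite in $\R^4$, to bound this piece by $C|x|^{p-\Lambda/(2\pi^2)}$; one then shows this is $o(\log|x|)$, either by iterating the asymptotic improvement until the exponent becomes nonpositive, or (if needed) by a separate contradiction argument using that the Kelvin transform $\tilde u$ (Proposition \ref{PKelvin}) satisfies an equation of the same type whose analysis forces $\Lambda>0$ and eventually rules out the bad regime.

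Finally, once the two-sided asymptotic $u(x)=-\frac{\Lambda+o(1)}{8\pi^2}\log|x|$ is proven, for every $\varepsilon>0$ and $|y|$ large one has $|y|^pe^{4u(y)}\ge |y|^{p-\Lambda/(2\pi^2)-4\varepsilon}$, and the integrability $|y|^pe^{4u}\in L^1(\R^4)$ forces $p-\Lambda/(2\pi^2)-4\varepsilon<-4$; letting $\varepsilon\downarrow 0$ yields $\Lambda\ge (4+p)2\pi^2=\Lambda_{*,p}$, concluding the proof.
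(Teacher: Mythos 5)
Your decomposition and the treatment of the regions $A_1$, $A_3$ and the non-singular part of $A_2$ are correct and essentially reproduce the standard far-field computation (this is the statement $u_1(x)=-\frac{\Lambda+o(1)}{8\pi^2}\log|x|$ in the paper, where $u_1=u-u_2$ and $u_2$ is exactly your near-diagonal term). The final deduction $\Lambda\ge\Lambda_{*,p}$ from \eqref{asymp5} and $|\cdot|^pe^{4u}\in L^1$ is also correct. The problem is the step you yourself flag as ``the main obstacle'': controlling $\int_{B_1(x)}\log\frac{1}{|x-y|}\,|y|^pe^{4u(y)}\,dy$. Your bound $C|x|^{p-\Lambda/(2\pi^2)}$ is only useful when $\Lambda\ge 2\pi^2p$, and nothing in your argument establishes this; a priori $\Lambda$ could even be negative, in which case Lemma \ref{lemasymp} gives no decay of $e^{4u}$ at all. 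Neither of your two fallbacks closes this. The ``iteration'' cannot start: the quantity being iterated is a \emph{lower} bound on $u$, which never improves the \emph{upper} bound on $e^{4u}$ entering the singular integral, so the exponent $p-\Lambda/(2\pi^2)$ is fixed, not shrinking. The Kelvin-transform fallback is only asserted, and ``forces $\Lambda>0$'' would in any case be far from the needed $\Lambda>2\pi^2p$.

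This missing inequality is precisely where the paper does its real work. The argument there is measure-theoretic rather than pointwise: writing $-u_2(x)=\int h(R)\log\frac{1}{|x-y|}\chi_{\{|x-y|\le1\}}\,d\mu(y)$ with $d\mu$ a probability measure supported in $B_R^c$ and $h(R)=\frac{1}{8\pi^2}\int_{B_R^c}|y|^pe^{4u}\,dy\to0$, Jensen's inequality and Fubini give $\int_{R+1<|x|<2R}e^{-4u_2}\,dx\le CR^4$; Cauchy--Schwarz then forces $\int_{R+1<|x|<2R}e^{4u_2}\,dx\gtrsim R^4$, i.e.\ $u_2$ cannot dig deep holes on a large portion of the annulus. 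If $\Lambda/(2\pi^2)\le p$ one has $|y|^pe^{4u_1(y)}\ge|y|^{-1}$ for large $|y|$, whence $\int_{R+1<|x|<2R}|x|^pe^{4u}\,dx\gtrsim R^3$, contradicting $|\cdot|^pe^{4u}\in L^1$. You would need to supply this (or an equivalent) argument; as written, your proof has a genuine gap at its central step.
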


\begin{proof}
We start by proving \eqref{asymp5}. We write $u=u_1+u_2$, where
$$u_2(x)=-\frac{1}{8\pi^2}\int_{B_1(x)}\log\frac{1}{|x-y|}|y|^p e^{4u(y)}dy.$$
Then we have
$$u_1(x)=  -\frac{\Lambda +o(1)}{8\pi^2}\log|x|\quad \text{as }|x|\to \infty.$$
We now claim that $\frac{\Lambda}{2\pi^2}>p$. Then we have that $|y|^pe^{4u_1(y)}\leq C$ on $\R^4$. This, and as $u_2\leq 0$, we easily get that  $$|u_2(x)|\leq C\int_{B_1(x)}\log\frac{1}{|x-y|}dy\leq C,$$
hence \eqref{asymp5} is proven. 

In order to prove the claim, given $R\gg 1$ and $|x|\geq R+1$ we write  $$-u_2(x)= \int_{B_R^c}h(R)\log\frac{1}{|x-y|}\chi_{|x-y|\leq 1}d\mu(y),\quad d\mu(y)=\frac{|y|^pe^{4u}}{\int_{B_R^c}|y|^pe^{4u}dy} dy,$$ where $$h(R)=\frac{1}{8\pi^2}\int_{B_R^c}|y|^pe^{4u}dy=o_R(1)\xrightarrow{R\to\infty}0.$$
By Jensen's inequality and Fubini's theorem we obtain
\begin{align*} \int_{R+1<|x|<2R}e^{-4u_2}dx\leq \int_{B_R^c}   \int_{R+1<|x|<2R} \left(1+\frac{1}{|x-y|^{4h(R)}}\right) dx d\mu(y)\leq CR^4. \end{align*}
Therefore, by H\"older inequality $$R^4\approx  \int_{R+1<|x|<2R}e^{2u_2}e^{-2u_2}dx\leq CR^2\left( \int_{R+1<|x|<2R}e^{4u_2}dx\right)^\frac12.$$ If $\frac{\Lambda}{2\pi^2}\leq p$, then we have that $|y|^pe^{4u_1(y)}\geq \frac{1}{|y|} $ for $|y|$ large. 
Hence, $$o_R(1)= \int_{R+1<|x|<2R}|x|^pe^{4u_1}e^{4u_2}dx\gtrsim \frac1R  \int_{R+1<|x|<2R}  e^{4u_2}dx,$$ a contradiction.

Now that \eqref{asymp5} is proven, we have that $\Lambda< \Lambda_{*,p}$ contradicts $(1-|x|^p)e^{4u}\in L^1(\R^4)$, hence we must have $\Lambda\ge \Lambda_{*,p}$.
\end{proof}

\begin{lem}\label{lem-gen}
Let $w\in C^0(B_1\setminus \{0\})$ be given by
$$w(x)=\int_{B_1}\log\(\frac{1}{|x-y|}\) f(y)dy,$$
for some nonnegative $f\in L^1(B_1)$.   If $w(x_k)=O(1)$ for some $x_k\to 0$ then
$$\int_{B_1}\log\(\frac{1}{|y|}\)f(y)dy<\infty.$$
\end{lem}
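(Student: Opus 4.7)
The plan is to use Fatou's lemma on a nonnegative rearrangement of the integrand. The key observation is that although $\log(1/|x_k-y|)$ can have either sign, it is uniformly bounded below on the relevant domain, so we can subtract off a constant-times-$f$ piece (which is integrable) and reduce to a pointwise nonnegative statement.

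First, I would note that for $k$ large enough, say $|x_k|\le 1$, and for all $y\in B_1$, we have $|x_k-y|<2$, and hence
$$\log\!\left(\frac{2}{|x_k-y|}\right)\ge 0.$$
Rewrite
$$w(x_k)+(\log 2)\int_{B_1}f(y)\,dy=\int_{B_1}\log\!\left(\frac{2}{|x_k-y|}\right)f(y)\,dy.$$
Since $f\in L^1(B_1)$ and $w(x_k)=O(1)$, the left-hand side is bounded as $k\to\infty$, so the nonnegative integrals on the right-hand side are uniformly bounded in $k$.

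Next, I would apply Fatou's lemma to the nonnegative integrands $\log(2/|x_k-y|)f(y)$: since $x_k\to 0$, the pointwise liminf (for $y\ne 0$) equals $\log(2/|y|)f(y)$, yielding
$$\int_{B_1}\log\!\left(\frac{2}{|y|}\right)f(y)\,dy\;\le\;\liminf_{k\to\infty}\int_{B_1}\log\!\left(\frac{2}{|x_k-y|}\right)f(y)\,dy\;<\;\infty.$$
Subtracting the finite term $(\log 2)\int_{B_1}f\,dy$ gives $\int_{B_1}\log(1/|y|)f(y)\,dy<\infty$, as desired.

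There is no real obstacle here: the proof is a one-line application of Fatou once one observes that the kernel is bounded below because $y$ ranges over a bounded set. The only point requiring minor care is the sign of $\log(1/|x_k-y|)$, which is handled by the constant shift $\log 2$.
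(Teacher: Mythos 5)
Your proof is correct and rests on the same two facts as the paper's: the kernel $\log(1/|x_k-y|)$ is uniformly bounded below on $B_1\times B_1$ (since $|x_k-y|<2$), and it converges pointwise to $\log(1/|y|)$ as $x_k\to 0$. The paper packages the limit passage via an explicit splitting of $B_1$ into $\{|y|<2|x_k|\}$ and $\{|y|\ge 2|x_k|\}$, whereas you use Fatou's lemma after shifting by $\log 2$; this is a minor, and if anything cleaner, variant of the same argument.
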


\begin{proof}
Let $x_k\to 0$ be such that $w(x_k)=O(1)$ as $k\to\infty$. Then we have
\begin{align*}
O(1)=\int_{B_1} \log\(\frac{1}{|x_k-y|}\) f(y)dy&\geq O(1)+\int_{2|x_k|\leq |y|\leq 1}\log\(\frac{1}{|x_k-y|}\)f(y)dy  \\&=O(1)+\int_{2|x_k|\leq |y|\leq 1}\log\(\frac{1}{|y|}\) f(y)dy.
\end{align*}
The lemma follows by taking $k\to\infty$. 
\end{proof}

\begin{lem}\label{lemmatildeu} Let $u$ be a normal solution to \eqref{eq-0} with $\Lambda =\Lambda_{*,p}$, and let $\tilde u$ as in \eqref{defKelvin} be its Kelvin transform, namely  
\begin{equation}\label{defutilde}
\tilde u(x)=u\(\frac{x}{|x|^2}\)-\(1+\frac p4\)\log|x|,\quad x\neq 0.
\end{equation}
Then
\begin{align}\label{lim-0}
\lim_{x\to 0}\tilde u(x)=-\infty, \\
\lim_{x\to 0}\Delta \tilde u(x)= +\infty. \label{lim-1}
\end{align}
\end{lem}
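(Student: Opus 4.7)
The plan is to exploit the Kelvin-transformed integral representation of $\tilde u$. By Proposition~\ref{PKelvin} with $\alpha=\Lambda_{*,p}/(8\pi^2)=1+p/4$, we have
\begin{equation*}
\tilde u(x)=\frac{1}{8\pi^2}\int_{\R^4}\log\frac{1}{|x-y|}\tilde K(y)e^{4\tilde u(y)}\,dy+c,\qquad \tilde K(y)=\frac{|y|^p-1}{|y|^4},
\end{equation*}
with $\tilde K\ge 0$ on $B_1^c$ and $\tilde K\le 0$ on $B_1$. Since $u$ is continuous at $0$, identity~\eqref{defutilde} yields $\tilde u(y)=-(1+p/4)\log|y|+O(1)$ at infinity, whence $\tilde K(y)e^{4\tilde u(y)}=O(|y|^{-8})$; dominated convergence then shows that the contributions of the $B_1^c$-part of $\tilde K$ both to $\tilde u(x)$ and to $\Delta\tilde u(x)=-\frac{1}{4\pi^2}\int\tilde K(y)e^{4\tilde u(y)}|x-y|^{-2}\,dy$ admit finite limits as $x\to 0$. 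The singular behavior therefore has to come entirely from the $B_1$-part, which has constant sign.

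For~\eqref{lim-0}, Lemma~\ref{lem-gen} reduces matters to proving $\int_{B_1}\log(1/|y|)|\tilde K(y)|e^{4\tilde u(y)}\,dy=+\infty$, and the substitution $z=y/|y|^2$ converts this, via~\eqref{defutilde}, into
\begin{equation*}
\int_{|z|\ge 1}\log|z|\,(|z|^p-1)\,e^{4u(z)}\,dz=+\infty.
\end{equation*}
I would argue this by contradiction. Were the integral finite, then $\log(|\cdot|)Ke^{4u}\in L^1(\R^4)$, so $u$ would satisfy~\eqref{eqIbis}. A standard dominated-convergence argument, splitting $\R^4$ into $\{|y|\le|x|/2\}$, $\{|x|/2<|y|<2|x|\}$, $\{|y|\ge 2|x|\}$—and using the tail estimate $\int_{|y|\ge|x|/2}|K|e^{4u}\,dy=o(1/\log|x|)$ coming from $\log(|\cdot|)Ke^{4u}\in L^1$ to absorb the $\log|x|$ loss in the middle annulus—would upgrade Lemma~\ref{lemasymp2} to $u(x)=-(1+p/4)\log|x|+C+o(1)$, and therefore $|x|^p e^{4u(x)}\sim c'|x|^{-4}$. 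This fails to be integrable at infinity, contradicting $(1-|x|^p)e^{4u}\in L^1(\R^4)$.

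For~\eqref{lim-1}, a direct analogue of Lemma~\ref{lem-gen} with kernel $|x-y|^{-2}$ (identical proof, from $|x_k-y|\le 3|y|/2$ when $|y|\ge 2|x_k|$) reduces the claim to $\int_{B_1}|\tilde K(y)|e^{4\tilde u(y)}/|y|^2\,dy=+\infty$, equivalent after the same change of variable to $\int_{|z|\ge 1}|z|^2(|z|^p-1)e^{4u(z)}\,dz=+\infty$. This is immediate from the weak asymptotic of Lemma~\ref{lemasymp2}: $e^{4u(z)}\ge|z|^{-4-p-\ve}$ for any $\ve>0$ and $|z|$ large, so the integrand is $\ge c|z|^{-2-\ve}$, which is not in $L^1(\R^4)$ for $\ve<2$.

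The genuinely delicate step is the $O(1)$ upgrade of the $o(\log|x|)$ asymptotic from Lemma~\ref{lemasymp2} used in proving~\eqref{lim-0}; the hypothesis $\log(|\cdot|)Ke^{4u}\in L^1$ is essential precisely to control the annular region $|y|\sim|x|$, where $|\log|1-y/x||$ may be large but the $L^1$-mass of $Ke^{4u}$ is only $o(1/\log|x|)$. In contrast, the corresponding step for~\eqref{lim-1} uses a kernel $|x-y|^{-2}$ that is strong enough to make the weak Lemma~\ref{lemasymp2} asymptotic sufficient.
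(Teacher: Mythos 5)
Your argument is correct. For \eqref{lim-0} it is essentially the paper's proof: assume $\tilde u(x_k)=O(1)$, reduce to the $B_1$-part of the representation (the paper's \eqref{36}), apply Lemma \ref{lem-gen}, undo the Kelvin transform to get $\int_{B_1^c}\log|y|\,|y|^pe^{4u}dy<\infty$, and use this through \eqref{eqIbis} to obtain a lower bound on $u$ contradicting $|\cdot|^pe^{4u}\in L^1(\R^4)$. Two small remarks on that step. First, you should not claim the full expansion $u(x)=-(1+\frac p4)\log|x|+C+o(1)$: the annular and near-diagonal contributions are only $O(1)$, not convergent, and in any case the one-sided bound $u(x)\ge -(1+\frac p4)\log|x|-C$ already gives $|x|^pe^{4u}\gtrsim |x|^{-4}\notin L^1$, which is all the contradiction needs. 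Second, the $o(1/\log|x|)$ tail-mass estimate only controls the region where the kernel is $O(\log|x|)$; on $B_1(x)$ the kernel $\log\frac{1}{|x-y|}$ is unbounded and the mass estimate alone does not close the argument — there you need the pointwise bound $|y|^pe^{4u(y)}\le C$ established in the proof of Lemma \ref{lemasymp2} (using $\frac{\Lambda_{*,p}}{2\pi^2}=4+p>p$ and $|u_2|\le C$). The paper is equally terse on this point, so it is not a gap, but it is the place to cite that lemma rather than the tail estimate.

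For \eqref{lim-1} your route is genuinely different from the paper's. The paper simply bounds $\frac{1}{|x-y|^2}\ge c\log\frac{1}{|x-y|}+O(1)$ on $B_1$ and concludes $\Delta\tilde u(x)\ge -C\tilde u(x)+O(1)\to+\infty$, piggybacking on the already-proven \eqref{lim-0}. You instead prove a $|x-y|^{-2}$-kernel analogue of Lemma \ref{lem-gen} (which works, since $|x_k-y|\le \frac32|y|$ on $\{|y|\ge 2|x_k|\}$ and the kernel is positive on $\{|y|\le 2|x_k|\}$) and then verify \emph{unconditionally} that $\int_{|z|\ge1}|z|^2(|z|^p-1)e^{4u}dz=+\infty$ from the crude asymptotic of Lemma \ref{lemasymp2}; the exponent count $|z|^{2+p}\cdot|z|^{-4-p-\ve}=|z|^{-2-\ve}$, non-integrable on $\R^4$ for $\ve<2$, is right. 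Your version is slightly longer but has the merit of being logically independent of \eqref{lim-0}, and it makes transparent why the stronger kernel tolerates the weaker asymptotic. Both arguments are valid.
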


\begin{proof}
Observe that $\sup_{B_1}\tilde u<\infty$ by Lemma \ref{lemasymp}. 
To prove \eqref{lim-0} we assume by contradiction that $\tilde u(x_k)=O(1)$ for a sequence $x_k\to 0$.
By Proposition \ref{PKelvin} we have
\begin{equation}\label{Kelvin2}
\tilde u(x)=\frac{1}{8\pi^2}\int_{\R^4}{\log\(\frac{1}{|x-y|}\)}\(1-\frac{1}{|y|^p}\)\frac{e^{4\tilde u(y)}}{|y|^{4-p}}dy+c,\quad x\ne 0.
\end{equation}
Since $\tilde u\leq C$ in $B_1$,  from \eqref{defutilde} and the continuity of $u$ it follows that
\begin{equation}\label{35}
\(1+\frac{1}{|y|^p}\)\frac{e^{4\tilde u(y)}}{|y|^{4-p}}\leq \frac{C}{|y|^8}\quad \text{in } B_1^c.
\end{equation}
Then from \eqref{Kelvin2} we obtain
\begin{align}\label{36}
\tilde u(x)= -\frac{1}{8\pi^2}\int_{B_1}\log\left(\frac{1}{|x-y|}\right)\frac{e^{4\tilde u(y)}}{|y|^{4}}dy+O(1),\quad 0<|x|<1.
\end{align}
Then by Lemma \ref{lem-gen} applied to \eqref{36}, and with a change of variables, we get
$$\int_{B_1^c}\log(|y|)|y|^pe^{4u(y)}dy=\int_{B_1}\log\(\frac{1}{|y|}\)\frac{e^{4\tilde u(y)}}{|y|^4}dy<\infty.$$
Then, as $|x|\to\infty$ we obtain
\begin{equation}\label{}
\begin{split}
\int_{|y|\le \sqrt{x}}(1-|y|^p)e^{4u(y)}dy&=\Lambda_{*,p}-\int_{|y|> \sqrt{x}}(1-|y|^p)e^{4u(y)}dy\\
&=\Lambda_{*,p}+O\(\frac{1}{\log(|x|)}\int_{|y|> \sqrt{x}}\log(|y|)(1-|y|^p)e^{4u(y)}dy\)\\
&=\Lambda_{*,p}+O\(\frac{1}{\log(|x|)}\).
\end{split}
\end{equation}
Moreover, $u$ can be given by \eqref{eqIbis} with $K=1-|x|^p$. Hence, for $|x|>>1$
\begin{align*}
u(x)&=O(1)+\frac{1}{8\pi^2}\left(  \int_{|y|\leq \sqrt{|x|}} +\int_{\sqrt{|x|}\leq|y|\leq 2|x|}+\int_{|y|\geq 2|x|} \right) \log\(\frac{1}{|x-y|}\) (1-|y|^p)e^{4 u(y)}dy \\
&\geq O(1)-\frac{\Lambda_{*,p}}{8\pi^2}\(1+O\(\frac{1}{\log|x|}\)\)\log|x|\\
& =-\frac{\Lambda_{*,p}}{8\pi^2}\log|x|+O(1),
\end{align*}
a contradiction to $|\cdot|^pe^{4u}\in L^1(\R^4)$, which completes the proof of \eqref{lim-0}.

To prove \eqref{lim-1} we differentiate into \eqref{Kelvin2} and obtain
$$\Delta \tilde u(x)=\frac{1}{2\pi^2} \int_{\R^4}\frac{1}{|x-y|^2}\(1-\frac{1}{|y|^p}\)\frac{e^{4\tilde u(y)}}{|y|^{4-p}}dy,$$
and as before, we use \eqref{35} to get
\begin{align*}
\Delta \tilde u(x)&=\frac{1}{2\pi^2} \int_{B_1}\frac{1}{|x-y|^2}\frac{e^{4\tilde u(y)}}{|y|^4}dy+O(1)\\
&\ge C \int_{B_1}\log\(\frac{1}{|x-y|}\)\frac{e^{4\tilde u(y)}}{|y|^4}dy +O(1)\\
&= -C8\pi^2 \tilde u(x)+O(1)  \to \infty  \quad \text{as }x\to 0.
\end{align*}
\end{proof}

\begin{prop}\label{propnonex} There exists no normal solution to \eqref{eq-0} for $p\geq4$.
 \end{prop}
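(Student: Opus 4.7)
The plan is to derive a contradiction by combining the lower bound $\Lambda\ge \Lambda_{*,p}$ from Lemma \ref{lemasymp2} with the Pohozaev identity of Proposition \ref{poho-3}. Indeed, since $p\ge 4$ forces $\Lambda_{*,p}=(4+p)2\pi^2\ge 16\pi^2=\Ls$, any normal solution satisfies $\Lambda\ge \Ls$; but the Pohozaev identity applied with $K=1-|x|^p$ yields
\[\frac{\Lambda(\Lambda-\Ls)}{\Ls}=-\frac{p}{4}\int_{\R^4}|x|^p e^{4u}dx<0,\]
hence $\Lambda<\Ls$, contradicting $\Lambda\ge \Ls$. The entire task is therefore to verify the decay hypothesis \eqref{cond-poho} of Proposition \ref{poho-3}, i.e.\ $R^{4+p}\max_{|x|=R}e^{4u(x)}\to 0$, for both $\Lambda>\Lambda_{*,p}$ and $\Lambda=\Lambda_{*,p}$ (the only possibilities permitted by Lemma \ref{lemasymp2}).

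The case $\Lambda>\Lambda_{*,p}$ is the easy one: the asymptotic $u(x)=-\frac{\Lambda+o(1)}{8\pi^2}\log|x|$ from Lemma \ref{lemasymp2} immediately gives $|x|^{4+p}e^{4u(x)}\le |x|^{4+p-\Lambda/(2\pi^2)+\varepsilon}$ for any $\varepsilon>0$ and $|x|$ large, and since $\Lambda/(2\pi^2)>4+p$ one may choose $\varepsilon$ small enough that the exponent is negative, so \eqref{cond-poho} holds and the Pohozaev contradiction closes this case.

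The borderline case $\Lambda=\Lambda_{*,p}$ is the main obstacle, because the asymptotic from Lemma \ref{lemasymp2} now only yields $|x|^{4+p}e^{4u(x)}=O(1)$, which is not enough. Here I would invoke Lemma \ref{lemmatildeu}, which is exactly tailored for this borderline situation: recalling $\tilde u(x)=u(x/|x|^2)-\alpha\log|x|$ with $\alpha=(4+p)/4$, the fact that $\tilde u(x)\to-\infty$ as $x\to 0$ translates to $u(y)+\alpha\log|y|\to-\infty$ uniformly as $|y|\to\infty$, which is precisely $|y|^{4+p}e^{4u(y)}\to 0$. Once \eqref{cond-poho} is secured, Proposition \ref{poho-3} applies and the same computation gives $\Lambda<\Ls$, contradicting $\Lambda=\Lambda_{*,p}\ge \Ls$. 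The whole argument is really a corollary of Lemma \ref{lemmatildeu}, which is exactly why the paper places that lemma immediately before this proposition.
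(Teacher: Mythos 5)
Your proposal is correct and follows essentially the same route as the paper: both split into the cases $\Lambda>\Lambda_{*,p}$ (where the logarithmic decay of $u$ gives \eqref{cond-poho}) and $\Lambda=\Lambda_{*,p}$ (where Lemma \ref{lemmatildeu} on the Kelvin transform gives \eqref{cond-poho}), and then conclude from the Pohozaev identity that $\Lambda<\Ls\le\Lambda_{*,p}\le\Lambda$, a contradiction. The only cosmetic difference is that the paper quotes Lemma \ref{lemasymp} instead of Lemma \ref{lemasymp2} for the decay in the first case.
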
 
\begin{proof} Assume by contradiction that there exists a normal solution $u$ to \eqref{eq-0} for some $p\geq 4$. Then necessarily we have that $\Lambda\geq\Lambda_{*,p}$, thanks to \eqref{asymp5}. Now we distinguish the following two cases. 

\noindent\textbf{Case 1} $\Lambda>\Lambda_{*,p}$. 

Since $u\leq -\frac{\Lambda}{8\pi^2}\log|x|+C$ for $|x|\geq 1$ by Lemma \ref{lemasymp}, we see that $u$ satisfies \eqref{cond-poho}. Hence, by \eqref{general Poho}, $$\Ls\leq \Lambda_{*,p}<\Lambda<\Ls,$$ a contradiction. 

\noindent\textbf{Case 2}  $\Lambda=\Lambda_{*,p}$. By Lemma \ref{lemmatildeu} we see that \eqref{cond-poho} is satisfied, and we arrive at a contradiction as in Case 1.
\end{proof}

\noindent\emph{Proof of Theorem \ref{thm0}} Combine Corollary \ref{cornonex}, Lemma \ref{lemasymp2} and Proposition \ref{propnonex}.
\endproof

For $\Lambda=\Lambda_{*,p}$ we obtain a sharper version of \eqref{lim-0} if $u$ is radial.

\begin{lem}\label{lem-4.6}
Given $p\in (0,4)$, let $u$ be a radially symmetric normal solution to \eqref{eq-0} with $\Lambda=\Lambda_{*,p}$. Then
$$\limsup_{|x|\to\infty}\frac{u(x)+(1+\frac p4)\log |x|}{\log\log|x|}=-\frac{1}{2}$$
\end{lem}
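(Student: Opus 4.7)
Setting $w(r):=u(r)+\bigl(1+\tfrac{p}{4}\bigr)\log r$, Proposition \ref{PKelvin} together with radiality gives $w(r)=\tilde u(1/r)$, so Lemma \ref{lemmatildeu} yields $w(r)\to-\infty$ as $r\to\infty$. The claim rewrites as $\limsup_{r\to\infty}w(r)/\log\log r=-\tfrac12$. The key ingredient is an explicit formula for $-\Delta w$. Starting from $-\Delta u(x)=\tfrac{1}{4\pi^2}\int_{\R^4}|x-y|^{-2}(1-|y|^p)e^{4u(y)}dy$ and applying Newton's spherical averaging identity $\tfrac{1}{|S^3|}\int_{S^3}|x-s\omega|^{-2}d\sigma(\omega)=\max(|x|,s)^{-2}$ in $\R^4$, for radial $u$ one finds $-\Delta u(r)=\sigma(r)/(4\pi^2r^2)+\tau(r)/2$ with $\sigma(r)=\int_{B_r}(1-|y|^p)e^{4u}dy$ and $\tau(r)=\int_r^\infty(1-s^p)se^{4u(s)}ds$; subtracting $\Delta\bigl((1+\tfrac p4)\log r\bigr)=\Lambda_{*,p}/(4\pi^2r^2)$ and using $\sigma(\infty)=\Lambda_{*,p}$ yields
\begin{equation}\label{plandw}
-\Delta w(r)=\frac{1}{2r^2}\int_r^\infty(s^p-1)e^{4u(s)}s(s^2-r^2)\,ds\ge 0\quad(r\ge 1).
\end{equation}
Substituting $e^{4u(s)}=e^{4w(s)}s^{-4-p}$ and $(s^p-1)s^{-4-p}\le s^{-4}$ in \eqref{plandw} gives the clean upper bound
\[
0\le -\Delta w(r)\le \frac{I(r)}{2r^2},\qquad I(r):=\int_r^\infty \frac{e^{4w(s)}}{s}\,ds,
\]
and $I(1)<\infty$ because, via the Kelvin change of variables, $I(1)$ equals (up to constants) the integral $\int_{B_1}|y|^{-4}e^{4\tilde u(y)}dy$, whose finiteness is proved in Lemma \ref{lemmatildeu}.

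To prove $\limsup\ge-\tfrac12$ I argue by contradiction: assume $w(r)\le-(\tfrac12+\delta)\log\log r$ for some $\delta>0$ and all $r\ge R_0$. Then $e^{4w(s)}\le (\log s)^{-2-4\delta}$, hence $I(r)\le C_\delta(\log r)^{-1-4\delta}$ and the above bound gives $-\Delta w(r)\le C_\delta(\log r)^{-1-4\delta}/r^2$. Passing to radial form, $(r^3 w'(r))'=r^3\Delta w(r)\ge -C_\delta r(\log r)^{-1-4\delta}$; integrating from some $R_1\ge R_0$ and using $\int^r\rho(\log\rho)^{-1-4\delta}d\rho\sim\tfrac12 r^2(\log r)^{-1-4\delta}$, I get $r^3 w'(r)\ge R_1^3 w'(R_1)-C'_\delta r^2(\log r)^{-1-4\delta}$. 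Dividing by $r^3$ and integrating once more,
\[
w(r)-w(R_1)\ge R_1^3 w'(R_1)\int_{R_1}^r s^{-3}\,ds-C'_\delta\int_{R_1}^r\frac{ds}{s(\log s)^{1+4\delta}},
\]
and since $4\delta>0$ both tail integrals on the right converge absolutely as $r\to\infty$. Hence $w(r)\ge w(R_1)-M$ for all $r\ge R_1$, contradicting $w\to-\infty$.

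The reverse inequality $\limsup\le-\tfrac12$ follows from the companion \emph{lower} bound $-\Delta w(r)\ge\tfrac{3}{16}I(2r)/r^2$, obtained by restricting integration in \eqref{plandw} to $s\ge 2r$ and using $s^p-1\ge\tfrac12 s^p$, $s^2-r^2\ge\tfrac34 s^2$. The same two successive ODE integrations as above, now producing an upper bound for $w$, combined with the self-consistent ansatz $e^{4w(r)}\sim C(\log r)^{-2}$ (in which matching the leading coefficients in $w'\sim -C/(4r\log r)$ with $w\sim -\tfrac C4\log\log r$ forces $C=2$), give $w(r)\le-\tfrac12\log\log r+O(1)$. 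The main obstacle I anticipate is precisely this matching step: the differential inequality alone admits a spurious candidate exponent $\alpha=\tfrac14$ (which appears if one forgets the constant prefactor of $e^{4w(r)}\sim C(\log r)^{-2}$), so one must invoke the integrability $I(1)<\infty$ in an essential way, rather than close the loop purely through the ODE, to pin down $\alpha=\tfrac12$.
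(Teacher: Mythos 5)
Your identity
$$-\Delta w(r)=\frac{1}{2r^2}\int_r^\infty(s^p-1)e^{4u(s)}\,s\,(s^2-r^2)\,ds$$
is correct (it is the radial form of $\Delta u=-\frac{1}{4\pi^2}\int|x-y|^{-2}Ke^{4u}\,dy$ via Newton's theorem), and your contradiction argument for $\limsup\ge-\frac12$ is complete: it is the outward ODE counterpart of the paper's argument, which instead assumes $\tilde u(x)\le-(\frac12+\frac{\ve}{4})\log\log\frac{1}{|x|}$, inserts this into \eqref{36}, and splits the integral over $B_{|x|/2}$, $B_{2|x|}\setminus B_{|x|/2}$, $B_1\setminus B_{2|x|}$ to conclude $\tilde u=O(1)$, contradicting \eqref{lim-0}. (Small correction: $I(1)<\infty$ is simply $\int_{B_1^c}|y|^pe^{4u}\,dy<\infty$, because $e^{4w(s)}/s=s^{3+p}e^{4u(s)}$; Lemma \ref{lemmatildeu} only establishes the \emph{log-weighted} version of that integral, and only under a contradiction hypothesis, so it is not the right citation.)

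The other direction, $\limsup\le-\frac12$, is a genuine gap in your proposal: a ``self-consistent ansatz'' with coefficient matching is not a proof, and, as you observe yourself, monotonicity plus $I(1)<\infty$ alone only give $e^{4w(r)}\log r\to0$, i.e.\ the exponent $\frac14$. The missing step is a pointwise inequality of the form $-w(r)+O(1)\ge c\,e^{4w(r)}(\log r)^2$. Given this, if $w(r_k)\ge-(\frac12-\delta)\log\log r_k$ along a sequence, the right-hand side is $\ge c(\log r_k)^{4\delta}$ while the left-hand side is $O(\log\log r_k)$, a contradiction; the constant $c$ is irrelevant, only the exponent $2$ on $\log r$ matters, so no coefficient matching is needed (and this route does not yield $w\le-\frac12\log\log r+O(1)$, which the paper lists as an open problem, only the $\limsup$ statement). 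The paper obtains this key inequality by first noting that $\tilde u$ is monotone near $0$ (a consequence of \eqref{lim-1}), so that $e^{4\tilde u(y)}\ge e^{4\tilde u(x)}$ on $2|x|\le|y|\le1$ in \eqref{36}, and then computing $\int_{2|x|}^1t^{-1}\log(1/t)\,dt=\frac12(\log 2|x|)^2$. Your framework contains the same ingredients: since $-\Delta w\ge0$ for $r\ge1$, the function $r^3w'$ is nonincreasing, which together with $w\to-\infty$ forces $w'<0$ for large $r$; then $I(2s)\ge e^{4w(r)}\log\frac{r}{2s}$ for $R_0\le2s\le r$, and two integrations of $-(s^3w'(s))'\ge\frac{3}{16}\,s\,I(2s)$ produce exactly the $(\log r)^2$ factor. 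So your scheme is completable along these lines, but as written the second half is an unproved heuristic rather than an argument.
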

\begin{proof}   
We set $\tilde u$ as in \eqref{defutilde}, so that it satisfies \eqref{Kelvin2}.
By Lemma \ref{lemmatildeu} we get 
$$\lim_{r\to0}\tilde u(r)=-\infty,\quad \lim_{r\to0}\D\tilde  u(r)=+\infty.$$
In particular, $\tilde u$ is monotone increasing in a small neighborhood of the origin. Using this and \eqref{36} we estimate for $|x|\to 0$
\begin{align*} -\tilde u(x)&\geq \frac{1}{8\pi^2}\int_{2|x|\leq |y|<1}\log\left(\frac{1}{|x-y|}\right)\frac{e^{4\tilde u(y)}}{|y|^{4}}dy +O(1)\\
 & =  \frac{1}{8\pi^2}\int_{2|x|\leq |y|<1}\log\left(\frac{1}{|y|}\right)\frac{e^{4\tilde u(y)}}{|y|^{4}}dy +O(1)  \\ 
& \geq  \frac{e^{4\tilde u(x)}}{8\pi^2}\int_{2|x|\leq |y|<1}\log\(\frac{1}{|y|}\)\frac{ dy}{|y|^{4}} +O(1)  \\  &=\frac{e^{4\tilde u(x)}}{8\pi^2}|S^3|\int_{2|x|}^1\frac{\log\frac1t}{t}dt +O(1).
\end{align*}
{Computing the integral and considering that $|S^3|=2\pi^2$ we get
$$-\tilde u(x)+O(1)\ge \frac{e^{4\tilde u(x)}}{8} (\log(2|x|))^2,\quad \text{as }x\to 0.$$
Taking the logarithm and rearranging we finally get}
$$\limsup_{x \to0}\frac{\tilde u(x)}{\log\log\(\frac{1}{|x|}\)}\leq -\frac12.$$ Next we show that the above  limsup  is actually $-\frac12$. 

We assume by contradiction that the above $\limsup$ is less than $-\frac12$. Then there exists $\ve>0$ such that  for $|x|$ small we have $$\tilde u(x)\leq -\(\frac12+\frac\ve4\)\log\log\frac {1}{|x|}.$$ Hence, from \eqref{36} we obtain for $|x|$ small \begin{align*} -\tilde u(x)&\leq  C\int_{B_1}\log\left(\frac{1}{|x-y|}\right)\frac{dy}{|y|^4|\log|y||^{2+\ve}}+O(1) \\ &=C(I_1+I_2+I_3)+O(1), \end{align*} where $$I_i=\int_{A_i}\log\frac{1}{|x-y|}\frac{dy}{|y|^4|\log|y||^{2+\ve}},$$ $$A_1=B_{\frac{|x|}{2}}, \quad A_2=B_{2|x|}\setminus B_\frac{|x|}{2},\quad A_3=B_1\setminus B_{2|x|}.$$ One easily gets $$I_1\leq \frac{C}{|\log|x||^\ve},\quad I_2\leq \frac{C}{|\log|x||^{1+\ve}},\quad I_3\leq C,$$ a contradiction to $\tilde u(x)\to-\infty$ as $|x|\to0.$
\end{proof}

\begin{prop}\label{propfinite} Let $u$ be a radial solution to
\begin{equation}\label{eqpm}
\D^2 u=(1-|x|^p)e^{4u}\quad\text{in }\R^4,
\end{equation}
for some $p>0$.
Then
\begin{equation}\label{intfinite}
\int_{\R^4}(1+|x|^p)e^{4u}dx<\infty.
\end{equation}
\end{prop}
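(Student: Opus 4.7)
The plan is to analyze the radial ODE associated to the equation and rule out $\int_{\R^4}|x|^p e^{4u}\,dx=\infty$ via a trichotomy on the asymptotic behavior of $\Delta u$. Writing the equation radially as $(r^3(\Delta u)')'=r^3(1-r^p)e^{4u}$ and integrating, I would define $Q(r):=r^3(\Delta u)'(r)=\int_0^r s^3(1-s^p)e^{4u(s)}\,ds$. Since the integrand is positive on $(0,1)$ and negative on $(1,\infty)$, $Q$ is increasing then strictly decreasing past $r=1$, so it admits a limit $L\in[-\infty,Q(1))$. If $L>-\infty$, the sign-definiteness of the integrand on $[1,\infty)$ upgrades conditional convergence of $Q$ to absolute convergence; combined with $|1-s^p|\sim s^p$ at infinity this gives $\int_1^\infty s^{3+p}e^{4u}\,ds<\infty$, which together with the obvious local bound proves the proposition.

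The real work is to rule out $L=-\infty$. Assuming this, pick $r_0>1$ with $Q(r_0)<0$; monotonicity gives $(\Delta u)'(r)=Q(r)/r^3<0$ for $r\ge r_0$, so $\Delta u$ is strictly decreasing on $[r_0,\infty)$ and admits a limit $L_2\in[-\infty,\Delta u(r_0))$. The key tool is the companion identity $r^3u'(r)=\int_0^r s^3\Delta u(s)\,ds$, which extracts the asymptotics of $u$ from those of $\Delta u$. If $L_2<0$ (including $L_2=-\infty$), then $u(r)\sim (L_2/8)r^2\to-\infty$ (or faster), so $e^{4u}$ decays super-exponentially and $\int_1^\infty s^{3+p}e^{4u}\,ds<\infty$, forcing $L>-\infty$, a contradiction. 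If $L_2>0$, then $u(r)\sim (L_2/8)r^2\to+\infty$, so $(1-r^p)e^{4u}\sim-r^pe^{L_2r^2/2}$; a Laplace-type estimate yields $Q(r)\lesssim -r^{2+p}e^{L_2r^2/2}$, which upon integration drives $\Delta u$ to $-\infty$ super-exponentially, contradicting $\Delta u\to L_2>0$.

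The delicate case is the borderline $L_2=0$. Since $\Delta u$ decreases to $0$ from above, $\Delta u>0$ for $r$ large, hence $M(r):=r^3u'(r)$ has eventually positive derivative $r^3\Delta u(r)$ and tends to some $M_\infty\in(-\infty,+\infty]$. If $M_\infty\in\R$, then $u'(r)\sim M_\infty/r^3$ is integrable at infinity and $u\to u_\infty\in\R$; if $M_\infty=+\infty$, a short sub-analysis distinguishing the growth rate of $M$ shows that $u$ still admits a limit in $\R\cup\{+\infty\}$. In every sub-case $e^{4u}$ is bounded below by a positive constant (or larger), so $(1-r^p)e^{4u}$ dominates a polynomial in absolute value at infinity; this forces $Q(r)\lesssim-r^{4+p}$ and, integrating once more, $\Delta u(r)\lesssim-r^{2+p}/(2+p)\to-\infty$, contradicting $\Delta u\to 0$.

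The main obstacle I expect is precisely this borderline subcase $L_2=0$, where the leading asymptotics of $u$ are not directly dictated by those of $\Delta u$; a careful bootstrap simultaneously tracking the three quantities $u$, $\Delta u$, and $Q$ is needed to close the contradiction and complete the proof.
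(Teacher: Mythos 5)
Your proposal is correct and follows essentially the same route as the paper: assume the integral diverges, observe that $\Delta u$ is then eventually strictly monotone decreasing, and run a case analysis on $\lim_{r\to\infty}\Delta u(r)$ using the radial identities $r^3(\Delta u)'(r)=\int_0^r s^3\Delta^2u\,ds$ and $r^3u'(r)=\int_0^r s^3\Delta u\,ds$. The only organizational difference is that the paper merges your sub-cases $L_2>0$ and $L_2=0$ into the single case $\lim\Delta u\ge 0$ (then $\Delta u>0$ for large $r$ gives $u\ge -C$ directly, after which $\Delta u\lesssim -r^{2+p}$ yields the contradiction), so your separate Laplace-type estimate and the borderline analysis via $M(r)$ are correct but not needed.
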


\begin{proof} If \eqref{intfinite} is false then there exists $R>0$ such that
$$\int_{B_R}(1-|x|^p)e^{4u}dx<0.$$ 
In particular, $(\D u)'(r)<0$ for $r\geq R$. We can consider the following two cases:\\

\noindent \textbf{Case 1:} $\lim_{r\to\infty}\D u(r)\geq 0$.  Then as $\D u$ is monotone decreasing on $(R,\infty)$, we see that $\D u>0$ on $(R,\infty)$. Therefore, by \eqref{identity} we get that $u\geq -C$ in $\R^4$. This, \eqref{eqpm} and \eqref{identity} imply that $\D u(r)\lesssim -r^{p+2}$ as $r\to\infty$, a contradiction. \\

\noindent \textbf{Case 2:}  $\lim_{r\to\infty}\D u(r)<0$. 
In this case, by \eqref{identity}, we have that $u(r)\lesssim -r^2$ as $r\to\infty$, a contradiction to the assumption $(1+|x|^p)e^{4u}\not\in L^1(\R^4)$.
\end{proof}

\section{Proof of Theorem \ref{thm1}}

\subsection{Existence}

The existence part in Theorem \ref{thm1} will be based on the following result, which will be proven in Section \ref{Sec:Prop2.1} using methods from \cite{CC} and \cite{Lin}.
 
\begin{prop}\label{prop1} For  every $0<\Lambda<\Ls$ and  $\lambda>0$, there exists a radial solution $u_\lambda$ to
 \begin{align}
& \D^2 u_\lambda=(\lambda-|x|^p)e^{-|x|^2}e^{4u_\lambda}\quad\text{in }\R^4,\label{eq-1}\\
 &\int_{\R^4}(\lambda-|x|^p)e^{-|x|^2}e^{4u_\lambda}dx=\Lambda,\label{eqLambda1}
\end{align} which is normal, namely $u_\lambda$ solves the integral equation
\begin{equation}\label{prop1eq1}
u_\lambda(x)=\frac{1}{8\pi^2}\int_{\R^4}\log\(\frac{1}{|x-y|}\)\(\lambda-|y|^p\)e^{-|y|^2}e^{4u_\lambda(y)}dy+c_\lambda,
\end{equation}
for some constant $c_\lambda\in \R$.
\end{prop}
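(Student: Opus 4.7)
The plan is to find $u_\lambda$ as a critical point of a suitable variational problem in a space of radial functions, following the Chang--Chen scheme, and then to verify that the critical point is normal.

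\textbf{Variational setup.} Set $K_\lambda(x) := (\lambda-|x|^p)e^{-|x|^2}$ and consider, on the space
$$X := \{u \in W^{2,2}_{\mathrm{loc}}(\R^4) : u\text{ radial},\ \Delta u \in L^2(\R^4),\ \int_{\R^4} K_\lambda e^{4u}\,dx > 0\},$$
the functional
$$I(u) := \frac12 \int_{\R^4} (\Delta u)^2\,dx - \frac{\Lambda}{4}\log\int_{\R^4} K_\lambda e^{4u}\,dx.$$
A critical point $u^*$ of $I$ solves $\Delta^2 u^* = \Lambda K_\lambda e^{4u^*}/\int K_\lambda e^{4u^*}\,dx$, and then $u_\lambda := u^* + \tfrac14\log\bigl(\Lambda/\int K_\lambda e^{4u^*}\,dx\bigr)$ satisfies \eqref{eq-1}--\eqref{eqLambda1}.

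\textbf{Coercivity via a weighted Adams inequality.} The main ingredient is a radial Adams--Moser--Trudinger estimate of the form
$$\log \int_{\R^4} e^{4u} e^{-|x|^2}\,dx \;\le\; \frac{\|\Delta u\|_2^2}{2\pi^2} + 4\bar u + C,$$
where $\bar u$ is an appropriate average of $u$ (fixed by an affine normalization, e.g.\ $\int u\,e^{-|x|^2}dx=0$, which removes translation by constants). Since $|K_\lambda(x)| \le (\lambda+|x|^p)e^{-|x|^2}$, this yields
$$\tfrac{\Lambda}{4}\log \int_{\R^4}|K_\lambda| e^{4u}\,dx \;\le\; \frac{\Lambda}{16\pi^2}\|\Delta u\|_2^2 + C,$$
so that $I(u) \ge \bigl(\tfrac12 - \tfrac{\Lambda}{16\pi^2}\bigr)\|\Delta u\|_2^2 - C$. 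Since $\Lambda<\Ls = 16\pi^2$ the coefficient is strictly positive, giving coercivity and boundedness below, modulo the restriction $\int K_\lambda e^{4u}>0$. This subcriticality is the reason the hypothesis $\Lambda < \Ls$ appears in the statement.

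\textbf{Existence of a minimizer.} Pick a radial test function supported in $\{|x|<\lambda^{1/p}\}$, where $K_\lambda>0$, and with any prescribed positive value of $\int K_\lambda e^{4u}$; this shows $\inf_X I > -\infty$ and that the infimum is not obtained on the boundary $\{\int K_\lambda e^{4u}=0\}$. For a minimizing sequence $(u_k)$, coercivity bounds $\|\Delta u_k\|_2$; after passing to a subsequence and using radial symmetry together with the Gaussian decay $e^{-|x|^2}$ to get compactness of the weighted embeddings, we extract a weak limit $u^*\in X$. The Gaussian factor kills any potential loss of mass at infinity for the sequence $K_\lambda e^{4u_k}$, while the strict inequality $\Lambda<16\pi^2$ rules out concentration (this is exactly where Chang--Chen's argument is invoked). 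Hence $\int K_\lambda e^{4u_k}\to \int K_\lambda e^{4u^*}$ and $u^*$ is a minimizer, thus a critical point.

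\textbf{Normality.} Once $u_\lambda$ is obtained, set
$$v(x) := \frac{1}{8\pi^2}\int_{\R^4}\log\tfrac{1}{|x-y|}\,K_\lambda(y) e^{4u_\lambda(y)}\,dy.$$
The integrability $K_\lambda e^{4u_\lambda}\in L^1(\R^4)$ makes $v$ well-defined, and $\Delta^2 v = K_\lambda e^{4u_\lambda} = \Delta^2 u_\lambda$, so $h := u_\lambda - v$ is biharmonic on $\R^4$. Using $\Delta u_\lambda \in L^2$ together with the explicit decay of $\Delta v$ (obtained by differentiating under the integral as in Proposition~\ref{PKelvin}), one shows $\Delta h \in L^2$ and $\Delta h \to 0$ at infinity, forcing $\Delta h\equiv 0$; a standard Liouville-type argument on radial harmonic functions in $\R^4$ with polynomial growth then gives $h\equiv c_\lambda$, proving \eqref{prop1eq1}.

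\textbf{Main obstacles.} The essential point is to push the radial Adams inequality on $\R^4$ through in the presence of the Gaussian weight and with the \emph{correct} sharp constant $\Ls=16\pi^2$, so that the subcritical regime $\Lambda<\Ls$ already gives coercivity and compactness. A second technical point is to verify that minimizing sequences do not escape to $\int K_\lambda e^{4u_k}\downarrow 0$, which I expect to follow from a concentration-compactness analysis: any such degeneration would, by subcriticality, be incompatible with $I(u_k)$ being bounded above.
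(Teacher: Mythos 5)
Your overall strategy (a variational argument with a sharp Adams-type inequality giving coercivity precisely for $\Lambda<\Ls$, followed by a Liouville argument for normality) is the right one in spirit, and it is what \cite{CC} does — but your concrete setup has a fatal flaw in the choice of function space. Any normal solution with total curvature $\Lambda>0$ satisfies
$$\Delta u_\lambda(x)=-\frac{1}{4\pi^2}\int_{\R^4}\frac{K_\lambda(y) e^{4u_\lambda(y)}}{|x-y|^2}\,dy=-\frac{\Lambda}{4\pi^2|x|^2}\(1+o(1)\)\quad\text{as }|x|\to\infty,$$
and $|x|^{-4}$ is not integrable at infinity in $\R^4$, so $\int_{\R^4}(\Delta u_\lambda)^2\,dx=+\infty$. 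Hence the solutions you are trying to produce do not belong to your space $X=\{\Delta u\in L^2(\R^4)\}$, and the functional $I$ is identically $+\infty$ on them; in fact a mean-value argument shows that \emph{no} solution of the Euler--Lagrange equation with positive total curvature can have $\Delta u\in L^2(\R^4)$. So the infimum of $I$ over $X$ is not attained and the scheme breaks down before compactness is even discussed. The same defect reappears in your normality step: $\Delta v\sim -\Lambda/(4\pi^2|x|^2)$ is not in $L^2(\R^4)$, so the claim "$\Delta h\in L^2$" cannot be used as stated (although that particular step is repairable: $\Delta h$ is an entire harmonic function whose averages over large balls tend to $0$, hence $\Delta h\equiv 0$, and a radial entire harmonic function is constant).

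The paper avoids this by not working on $\R^4$ at all: following \cite[Theorem 2.1]{CC}, one writes $u_\lambda=w\circ\Pi^{-1}+(1-\mu)\eta_0$ with $\eta_0=\log\frac{2}{1+|x|^2}$, $\mu=1-\Lambda/\Ls$, and minimizes over $w\in H^2(S^4)$ the functional associated with the Paneitz operator $P^4_{g_0}=\Delta_{g_0}(\Delta_{g_0}-2)$. The logarithmic profile (and the non-$L^2$ tail of the Laplacian) is carried by the explicit background $\eta_0$, and coercivity for $\Lambda<\Ls$ comes from Beckner's sharp inequality on the compact sphere — this also disposes of your second unproven ingredient, the sharp Gaussian-weighted Adams inequality on $\R^4$, which is exactly the nontrivial statement the transplantation to $S^4$ replaces by a known one. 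Normality is then obtained from elliptic regularity of $w$ on $S^4$ (giving $u_\lambda=\frac{-\Lambda}{8\pi^2}\log|x|+O(1)$, hence $h_\lambda=u_\lambda-v_\lambda=O(\log|x|)$) plus the Liouville theorem for biharmonic functions of logarithmic growth. If you want to stay on $\R^4$, you must minimize over perturbations $u=(1-\mu)\eta_0+w$ of the fixed profile, not over $\{\Delta u\in L^2(\R^4)\}$.
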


We will often use the identity
\begin{align}\label{identity} w(R)-w(r)=\int_{r}^R\frac{1}{\omega_3t^3}\int_{B_t}\D w dxdt,\quad 0\leq r< R,\, w\in C^2 _{rad}(\R^4), \quad \omega_3=2\pi^2,\end{align}
which follows at once from the divergence theorem and the fundamental theorem of calculus.

Let $u_\lambda$ be given as in Proposition \ref{prop1}.
 
\begin{lem}\label{lem1} For every $\lambda>0$ we have $u_\lambda(x)\downarrow -\infty$  and $\Delta u_\lambda(x)\uparrow 0$ as $|x|\to \infty$.
\end{lem}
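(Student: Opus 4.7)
Set $v := \Delta u_\lambda$ and $f(y) := (\lambda-|y|^p)e^{-|y|^2}e^{4u_\lambda(y)}$, so that $\Delta v = f$ and $\int_{\R^4} f\, dx = \Lambda>0$ by \eqref{eqLambda1}. The plan is to control $v$ first, and then deduce the corresponding monotonicity for $u_\lambda$ itself.

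\emph{Step 1 (strict monotonicity of $v$).} By radial symmetry the divergence theorem in $\R^4$ applied to $\Delta v=f$ gives
$$v'(r) = \frac{F(r)}{\omega_3 r^3}, \qquad F(r) := \int_{B_r} f\, dx, \qquad r>0.$$
A direct differentiation shows $F'(r) = \omega_3 r^3(\lambda-r^p)e^{-r^2}e^{4u_\lambda(r)}$, so $F$ is strictly increasing on $(0,\lambda^{1/p})$ from $F(0)=0$ and strictly decreasing on $(\lambda^{1/p},\infty)$ down to $F(\infty)=\Lambda>0$. Hence $F>0$ on $(0,\infty)$, so $v$ is strictly radially increasing. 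The crucial observation is that, even though $f$ changes sign at $|y|=\lambda^{1/p}$, the positivity of the total mass $\Lambda$ is exactly what prevents $F$ from becoming negative on $(\lambda^{1/p},\infty)$.

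\emph{Step 2 ($v\uparrow 0$ at infinity).} Applying $\Delta_x$ to \eqref{prop1eq1} and using $\Delta_x\log(1/|x-y|)=-2/|x-y|^2$ in $\R^4$ gives
$$v(x)= -\frac{1}{4\pi^2}\int_{\R^4}\frac{f(y)}{|x-y|^2}\,dy.$$
Lemma \ref{lemasymp} applies to \eqref{prop1eq1}: the kernel $K(y):=(\lambda-|y|^p)e^{-|y|^2}$ is nonpositive for $|y|\ge\lambda^{1/p}$, and \eqref{prop1eq1} differs from the form in \eqref{eqI2} only by the finite additive constant $(8\pi^2)^{-1}\int\log|y|\,K(y)e^{4u_\lambda(y)}\,dy$, whose convergence is guaranteed by the Gaussian factor. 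Hence $u_\lambda(y)\le -\tfrac{\Lambda}{8\pi^2}\log|y|+O(1)$, and combined with $e^{-|y|^2}$ this gives $|f(y)|\le Ce^{-|y|^2/2}$ for $|y|$ large. Splitting the Riesz-type integral into $\{|y|\le|x|/2\}$, where $|x-y|^{-2}\le 4/|x|^{2}$, and $\{|y|>|x|/2\}$, where $|f|$ is super-exponentially small and $|x-y|^{-2}$ is locally integrable in $\R^4$, yields $v(x)\to 0$. Together with Step 1, this forces $v<0$ throughout $\R^4$ and $v\uparrow 0$ as $|x|\to\infty$.

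\emph{Step 3 ($u_\lambda\downarrow -\infty$).} The same divergence-theorem identity applied to $u_\lambda$ itself gives $u_\lambda'(r) = (\omega_3 r^3)^{-1}\int_{B_r} v\, dx < 0$ by Step 2, so $u_\lambda$ is strictly radially decreasing. The bound $u_\lambda(x)\le -\tfrac{\Lambda}{8\pi^2}\log|x|+O(1)$ from Lemma \ref{lemasymp}, together with $\Lambda>0$, then gives $u_\lambda(x)\to-\infty$.

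\emph{Main obstacle.} The pivotal point is Step 1: one must exploit that, although $f$ is sign-changing, the one-sign-change structure of $(\lambda-|y|^p)$ combined with $\Lambda>0$ pins $F(r)>0$ for every $r>0$. Once this is secured, Steps 2--3 are routine Riesz-potential asymptotics built on Lemma \ref{lemasymp}.
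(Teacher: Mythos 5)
Your proof is correct and follows essentially the same route as the paper: positivity of $F(r)=\int_{B_r}\Delta^2u_\lambda\,dx$ (increasing up to $\lambda^{1/p}$, then decreasing to $\Lambda>0$) gives monotonicity of $\Delta u_\lambda$ via the radial identity, differentiating the integral representation gives $\Delta u_\lambda\to0$, and combining the two yields $\Delta u_\lambda<0$, hence $u_\lambda$ decreasing, with $u_\lambda\to-\infty$ from Lemma \ref{lemasymp}. The only (immaterial) difference is that the paper bounds the Riesz integral using merely $u_\lambda\le C$ rather than the sharper logarithmic decay you invoke.
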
 

\begin{proof} The function
$$r\mapsto \int_{B_r}\Delta^2 u_\lambda(x) dx=\int_{B_r} (\lambda-|x|^p)e^{-|x|^2}e^{4u_\lambda(x)}dx$$
is increasing on $[0,\lambda^\frac 1p]$, and decreasing to $\Lambda$ on $[\lambda^\frac 1p,\infty)$. In particular it is positive for every $r>0$. Then, by \eqref{identity} (applied with $w=\Delta u_\lambda$) we infer that $\Delta u_\lambda(x)$ is an increasing function of $|x|$.

Differentiating under the integral sign from \eqref{prop1eq1} we obtain $$|\D u_\lambda(x)|\leq C\int_{\R^4}\frac{1}{|x-y|^2}(1+|y|^p)e^{-|y|^2}dy\xrightarrow{|x|\to\infty}0,$$ where in the first inequality we have used that $u_\lambda\leq C$ on $\R^4$, thanks to Lemma \ref{lemasymp}. 

This in turn implies that $\Delta u_\lambda <0$, hence $u_\lambda$ is decreasing by \eqref{identity}. Finally $u_\lambda \to -\infty$ as $|x|\to\infty$ follows from  Lemma \ref{lemasymp}.
\end{proof}

\begin{lem}\label{lem2} We have $\lambda e^{4u_\lambda (0)}\to\infty$ as $\la\downarrow 0$.   \end{lem}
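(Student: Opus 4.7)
The plan is to obtain a direct lower bound on $\lambda e^{4u_\lambda(0)}$ by exploiting the fact that the integrand in \eqref{eqLambda1} has positive part supported on the shrinking ball $B_{\lambda^{1/p}}$, while the integral over $\R^4$ is the fixed positive constant $\Lambda$. This is a pigeonhole-type argument: a fixed amount of positive curvature must be produced from an integrand whose amplitude per unit $\lambda$ is controlled and whose support has volume $O(\lambda^{4/p})$, forcing $e^{4u_\lambda(0)}$ to explode faster than $\lambda^{-1}$.

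Concretely, by Lemma \ref{lem1} the function $u_\lambda$ is radially nonincreasing, so $e^{4u_\lambda(x)}\le e^{4u_\lambda(0)}$ for every $x\in \R^4$. Since $(\lambda-|x|^p)e^{-|x|^2}\le 0$ on $B_{\lambda^{1/p}}^c$ and $(\lambda-|x|^p)e^{-|x|^2}\le \lambda$ on $B_{\lambda^{1/p}}$, equation \eqref{eqLambda1} yields
$$\Lambda \;\le\; \int_{B_{\lambda^{1/p}}}(\lambda-|x|^p)e^{-|x|^2}e^{4u_\lambda}\,dx \;\le\; \lambda\, e^{4u_\lambda(0)}\,|B_{\lambda^{1/p}}| \;=\; \frac{\pi^2}{2}\,\lambda^{1+4/p}\,e^{4u_\lambda(0)}.$$
Rearranging gives $\lambda e^{4u_\lambda(0)}\ge \frac{2\Lambda}{\pi^2}\,\lambda^{-4/p}$, which diverges to $+\infty$ as $\lambda\downarrow 0$ because $p>0$.

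I do not foresee any obstacle: the two ingredients we need are already available, namely the radial monotonicity of $u_\lambda$ from Lemma \ref{lem1} and the exact volume scaling $|B_{\lambda^{1/p}}|=\frac{\pi^2}{2}\lambda^{4/p}$ of the ball on which the integrand is nonnegative. No use of the integral representation \eqref{prop1eq1}, of the biharmonic equation, or of any subtler asymptotic information is required.
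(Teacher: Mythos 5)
Your proof is correct and is essentially the paper's own argument: the paper runs the same estimate (drop the negative part of the integrand, bound $e^{4u_\lambda}$ by $e^{4u_\lambda(0)}$ via radial monotonicity, and use that $|B_{\lambda^{1/p}}|\sim\lambda^{4/p}$) in contrapositive form, assuming $\lambda e^{4u_\lambda(0)}\le C$ and deriving $\Lambda\to 0$. Your direct version even yields the quantitative rate $\lambda e^{4u_\lambda(0)}\gtrsim\lambda^{-4/p}$, but the underlying idea is identical.
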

\begin{proof} Assume by contradiction that $\la e^{4u_\la(0)}\leq C$ as $\la\to0$. Then $$\Lambda=\int_{\R^4}(\lambda-|x|^p)e^{-|x|^2}e^{4u_\lambda}dx\leq \int_{B_{\la^\frac1p}} \la e^{-|x|^2}e^{4u_\la (0)}\xrightarrow{\la\to0}0, $$
which is absurd.
\end{proof}

Now we set 
\begin{equation}\label{defetalambda}
\eta_\lambda(x)=u_\lambda(r_\la x)-u_\la(0),\quad \la r_\la^4 e^{4u_\la(0)}:=1.
\end{equation}
Notice that $r_\la\to0$ by Lemma \ref{lem2}. 
By definition and Lemma \ref{lem1} we have that
$$\eta_\la\leq 0= \eta_\la(0),\quad  \D \eta_\la(x)\uparrow 0 \quad\text{as }|x|\to\infty,$$
and by a change of variables in \eqref{prop1eq1} we see that $\eta_\lambda$ is a normal solution to
$$\D^2\eta_\la=\(1-\frac{r_\la^p}{\la}|x|^p\)e^{-r_\la^2|x|^2}e^{4\eta_\la}.$$
With a similar change of variables in \eqref{eqLambda1} we also get
\begin{align}\int_{\R^4}\(1-\frac{r_\la^p}{\la}|x|^p\)e^{-r_\la^2|x|^2}e^{4\eta_\la}dx=\Lambda.\label{relation-4}\end{align}
Since $\eta_\la\leq 0$, we have
$$0<\Lambda <\int_{B_\frac{\lambda^{1/p}}{r_\lambda}} e^{-r_\lambda^2|x|^2}e^{4\eta_\lambda}dx \le \mathrm{meas}\(B_\frac{\lambda^{1/p}}{r_\lambda}\),$$
which implies that
\begin{equation}\label{rlala}
\limsup_{\la\to0}\frac{r_\la^p}{\la}<\infty.
\end{equation}
 \begin{lem}\label{lem3} We have
 \begin{equation}\label{limsupDeta}
 \limsup_{\la\to0}|\D \eta_\la(0)|<\infty.
 \end{equation}
 \end{lem}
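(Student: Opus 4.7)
The starting point is the integral representation. From \eqref{prop1eq1}, rescaling by $r_\lambda$ and using $\eta_\lambda(0)=0$ (which fixes the free constant to be $0$), I would first write
$$\eta_\lambda(x)=\frac{1}{8\pi^2}\int_{\R^4}\log\frac{|y|}{|x-y|}f_\lambda(y)\,dy,\qquad f_\lambda(y):=\bigl(1-\beta_\lambda|y|^p\bigr)e^{-r_\lambda^2|y|^2}e^{4\eta_\lambda(y)},$$
with $\beta_\lambda:=r_\lambda^p/\lambda$, which is uniformly bounded by \eqref{rlala}. Applying $\Delta_x$ (which equals $-2/|x-y|^2$ inside the kernel in $\R^4$) at $x=0$ gives
$$-\Delta\eta_\lambda(0)=\frac{1}{4\pi^2}\int_{\R^4}\frac{f_\lambda(y)}{|y|^2}\,dy.$$
Using \eqref{identity} applied to $w=\Delta\eta_\lambda$, and the fact $\Delta\eta_\lambda(x)\to 0$ at infinity from Lemma \ref{lem1}, this can equivalently be expressed as
$$-\Delta\eta_\lambda(0)=\frac{1}{2\pi^2}\int_0^\infty\frac{h_\lambda(s)}{s^3}\,ds,\qquad h_\lambda(s):=\int_{B_s}f_\lambda(y)\,dy,$$
and the monotonicity of $\Delta\eta_\lambda$ from Lemma \ref{lem1} forces $h_\lambda(s)\ge 0$ for every $s$.

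I would then split the integral at $s=1$. For $s\in(0,1)$, the crude estimate $e^{4\eta_\lambda}\le 1$, $|1-\beta_\lambda|y|^p|\le 1+\beta_\lambda$, $e^{-r_\lambda^2|y|^2}\le 1$ yields $h_\lambda(s)\le C s^4$, so $\int_0^1 h_\lambda(s)/s^3\,ds\le C$. For $s\ge 1$, the key observation is that $h_\lambda'(s)$ has the sign of $1-\beta_\lambda s^p$, so $h_\lambda$ reaches its maximum at $s_*:=\beta_\lambda^{-1/p}$ and then decreases to $h_\lambda(\infty)=\Lambda$. Consequently $h_\lambda(s)\le h_\lambda(s_*)$ for all $s$, whence
$$\int_1^\infty\frac{h_\lambda(s)}{s^3}\,ds\le\frac{h_\lambda(s_*)}{2},$$
and the lemma reduces to showing $h_\lambda(s_*)$ stays bounded as $\lambda\to 0$. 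Dropping the nonnegative factors $1-\beta_\lambda|y|^p\le 1$ and $e^{-r_\lambda^2|y|^2}\le 1$ leaves us to bound $\int_{B_{s_*}}e^{4\eta_\lambda}\,dy$ uniformly.

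My plan for this final step is a contradiction argument. Assume $-\Delta\eta_\lambda(0)=:M_\lambda\to\infty$ along a subsequence. The radial Taylor expansion at the origin, using $\eta_\lambda(0)=0$, $\nabla\eta_\lambda(0)=0$, and $\Delta^2\eta_\lambda(0)=1$ (directly from the equation evaluated at $x=0$), yields $\eta_\lambda(x)=-\frac{M_\lambda}{8}|x|^2+\frac{|x|^4}{192}+O(|x|^6)$ near $0$. Combined with the radial monotonicity $\eta_\lambda(|y|)\le\eta_\lambda(\varepsilon)$ for $|y|\ge\varepsilon$ from Lemma \ref{lem1}, this gives $\eta_\lambda\to-\infty$ uniformly on every annulus $\{|y|\ge\varepsilon\}$, hence $e^{4\eta_\lambda}\to 0$ there. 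Using the Gaussian cutoff $e^{-r_\lambda^2|y|^2}$ on the tail $|y|\ge s_*$ together with the mass constraint $\int f_\lambda\,dy=\Lambda$ to control the annular integrals uniformly, I would then conclude $\int_{B_{s_*}}e^{4\eta_\lambda}\,dy\to 0$, contradicting $h_\lambda(s_*)\ge\Lambda>0$ which follows from $h_\lambda$ being increasing on $[0,s_*]$ and $h_\lambda(\infty)=\Lambda$.

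The main obstacle is the last step: quantitatively balancing the pointwise decay $e^{4\eta_\lambda(y)}\to 0$ on $\{|y|\ge\varepsilon\}$ against the a priori unbounded Lebesgue measure of $B_{s_*}$ (should $\beta_\lambda\to 0$). This balance is exactly the content of a Brezis--Merle-style concentration lemma applied to the rescaled sequence, and making it work uniformly in $\lambda$ is the delicate point.
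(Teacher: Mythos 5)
Your reduction of \eqref{limsupDeta} to bounding $h_\la(s_*)$ is correct as far as it goes (the identity $-\D\eta_\la(0)=\frac{1}{2\pi^2}\int_0^\infty h_\la(s)s^{-3}\,ds$, the sign and monotonicity structure of $h_\la$, and the splitting at $s=1$ are all fine), but the step you yourself flag as ``the delicate point'' is a genuine gap, and the sketch you give for it does not close. The Taylor expansion $\eta_\la(x)=-\frac{M_\la}{8}|x|^2+\frac{|x|^4}{192}+O(|x|^6)$ is only valid on a ball whose radius is not uniform in $\la$: the remainder involves sixth-order derivatives of $\eta_\la$ at $0$, over which you have no control, and as $M_\la\to\infty$ nothing prevents $\D\eta_\la$ from increasing from $-M_\la$ back to, say, $-1$ on an interval $[0,R_\la]$ with $R_\la\to 0$. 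In that scenario $\eta_\la(\ve)$ need not tend to $-\infty$ for fixed $\ve>0$, so the claimed uniform divergence of $\eta_\la$ on annuli $\{|y|\ge\ve\}$ is unjustified. Even granting it, your $\beta_\la$ may tend to $0$, so $s_*=\beta_\la^{-1/p}$ may tend to $\infty$, and pointwise decay of $e^{4\eta_\la}$ without a rate cannot beat $|B_{s_*}|\to\infty$; you would essentially need the concentration estimate you are trying to prove.

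The paper closes the argument differently and more cheaply, staying at the level of $\D\eta_\la$ and never estimating a mass over $B_{s_*}$. Assume $|\D\eta_\la(0)|\to\infty$. Since $\D\eta_\la\uparrow 0$ at infinity, there is $R_\la$ with $\D\eta_\la(R_\la)=-1$; then $\D\eta_\la\le -1$ on $[0,R_\la]$, and \eqref{identity} gives the pointwise Gaussian bound $\eta_\la(r)\le -\frac{r^2}{8}$ on $[0,R_\la]$ --- no Taylor expansion, only monotonicity. Applying \eqref{identity} once more with $w=\D\eta_\la$ and using \eqref{rlala}, the quantity $\D\eta_\la(R_\la)-\D\eta_\la(0)=|\D\eta_\la(0)|-1$ equals $\int_0^{R_\la}\frac{1}{\omega_3 t^3}\int_{B_t}\D^2\eta_\la\,dx\,dt$, which is $O(1)$ because on $B_{R_\la}$ the integrand is dominated by $C(1+|x|^p)e^{-|x|^2/2}$, yielding an immediate contradiction. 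If you want to keep your framework, you should replace the Taylor-expansion step by this $R_\la$ device; note, however, that the resulting Gaussian bound only controls $e^{4\eta_\la}$ on $B_{R_\la}$ and not on all of $B_{s_*}$, which is precisely why the paper estimates the increment of $\D\eta_\la$ over $[0,R_\la]$ rather than the mass $h_\la(s_*)$.
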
 
 \begin{proof} Assume by contradiction that
\begin{equation}\label{limsupDeta2}
\limsup_{\la\to0}|\D \eta_\la(0)|=\infty.
\end{equation}
Then, using that $\Delta \eta_\lambda(x)\uparrow 0$ as $|x|\to\infty$ for every $\lambda>0$, there exists $R_\la>0$ such that $\D \eta_{\la}(R_\la)=-1$. Then, as $\D\eta_\la\leq -1$ on $[0,R_\la]$, we have that $$\eta_{\la}(r)\leq -\frac {1}{8}r^2\quad \text{for } 0\leq r\leq R_\la.$$
From this, and using \eqref{identity} and \eqref{rlala} one obtains
 \begin{equation*}\begin{split}
 \D\eta_\la(R_\la)-\D \eta_\la(0)&=  O\(\int_0^{R_\la}\frac{1}{t^3}\int_{B_t}\Delta^2 \eta_\lambda dxdt\) \\
 &=O\(\int_0^{R_\la}\frac{1}{t^3}\int_{B_t}(1+|x|^p)e^{4\eta_\la}dxdt\) =O(1),
 \end{split}\end{equation*}
 a contradiction to \eqref{limsupDeta2} and the definition of $R_\lambda$.
 \end{proof}
 
Using that $\eta_\lambda(0)=0$ and Lemma \ref{lem3}, together with ODE theory we get that, up to  a subsequence,
\begin{equation}\label{convetalambda}
\eta_\la\to\eta\quad \text{in } C^4_{loc}(\R^4),\text{ as }\lambda\to 0,
 \end{equation}
where the limit function $\eta$ satisfies
$$\D^2 \eta=(1-\mu |x|^p)e^{4\eta}\quad\text{in }\R^4,\quad \mu:=\lim_{\la\to0}\frac{r_\la^p}{\la}\in [0,\infty).$$ 
Notice that at this stage we do not know whether $\eta$ is a normal solution, $\mu>0$, and $\int_{\R^4}(1-|x|^p)e^{4\eta}dx=\Lambda$. This is what we are going to prove next.

\begin{lem}\label{lem-mu0} If $\mu=0$ then $e^{4\eta}\in L^1(\R^4)$. \end{lem}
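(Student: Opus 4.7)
The plan is to identify the equation satisfied by the limit $\eta$ and then analyze the resulting radial fourth-order ODE. First I would observe that since $\mu=0$ and $r_\lambda\to 0$, the coefficient
\[
\tilde K_\lambda(x):=\Bigl(1-\tfrac{r_\lambda^p}{\lambda}|x|^p\Bigr)e^{-r_\lambda^2|x|^2}
\]
tends to $1$ locally uniformly; combined with \eqref{convetalambda} this gives that $\eta\in C^4(\R^4)$ is radial and solves $\D^2\eta = e^{4\eta}$ on all of $\R^4$. The structural properties of $\eta_\lambda$ pass to the limit: $\eta(0)=0$, $\eta\le 0$ is nonincreasing in $|x|$, $\Delta\eta\le 0$ is nondecreasing in $|x|$ (from Lemma \ref{lem1}), and $\Delta\eta(0)$ is finite (from Lemma \ref{lem3}). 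Hence $L:=\lim_{r\to\infty}\Delta\eta(r)\in[\Delta\eta(0),0]$ exists. Setting $M(R):=\int_{B_R}e^{4\eta}\,dx$ and applying \eqref{identity} to $w=\Delta\eta$ will give the key identity $\omega_3 r^3(\Delta\eta)'(r)=M(r)$.

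I would then split according to the sign of $L$. In the easy case $L<0$, we have $\Delta\eta\le L$ throughout $\R^4$, and inserting this into \eqref{identity} applied to $w=\eta$ yields $\eta(R)\le LR^2/8$, so $e^{4\eta}$ decays like $e^{-|L|R^2/2}$ and $M(\infty)<\infty$ is immediate. In the borderline case $L=0$ I would argue by contradiction, supposing $M(\infty)=+\infty$. For every $A>0$ the monotonicity of $M$ provides $R_A$ with $M(R)\ge A$ on $[R_A,\infty)$; integrating $(\Delta\eta)'(t)=M(t)/(\omega_3 t^3)$ from $R$ to infinity and using $\Delta\eta(\infty)=0$ together with the monotonicity of $M$ yields
\[
|\Delta\eta(R)|=\int_R^\infty \frac{M(t)}{\omega_3 t^3}\,dt\;\ge\;\frac{M(R)}{2\omega_3 R^2}\;\ge\;\frac{A}{2\omega_3 R^2}\qquad\text{for }R\ge R_A.
\]
Consequently $\int_{B_s}|\Delta\eta|\,dx=\omega_3\int_0^s\rho^3|\Delta\eta(\rho)|\,d\rho\gtrsim As^2$ for $s\ge 2R_A$, and a second application of \eqref{identity} with $w=\eta$ gives $-\eta(R)\ge \frac{A}{8\omega_3}\log\bigl(R/(2R_A)\bigr)$ for $R$ large. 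Since $A$ is arbitrary, choosing $A$ sufficiently large makes $e^{4\eta(R)}\le R^{-5}$ eventually, forcing $M(\infty)<\infty$ and contradicting the assumption.

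The hard part is the borderline case $L=0$: one must extract enough polynomial decay for $\eta$ purely from the hypothetical divergence of $M$. The mechanism is a self-feeding loop in which the monotonicity of $M$ and the condition $\Delta\eta(\infty)=0$ convert a lower bound on $M$ into a lower bound on $|\Delta\eta|$ via one integration of the ODE, and then into a logarithmic lower bound on $-\eta$ via a second integration, strong enough to force $M(\infty)<\infty$ and close the argument.
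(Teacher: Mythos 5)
Your argument is correct and follows the same route as the paper: split on the limit $c_0=\lim_{r\to\infty}\Delta\eta(r)$, dispose of $c_0<0$ via the quadratic decay $\eta\lesssim -r^2$, and in the case $c_0=0$ use the divergence of $\int_{B_r}e^{4\eta}\,dx$ together with two integrations of \eqref{identity} to force $\eta(r)\le -C A\log r$ with $A$ arbitrarily large, contradicting non-integrability. Your write-up simply makes explicit the steps the paper leaves to the reader.
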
 
\begin{proof}   It follows  from Lemma \ref{lem1} and \eqref{convetalambda} that  $\Delta \eta$ is increasing, and $\lim_{r\to\infty}\D\eta(r)=:c_0\in[-\infty, 0]$. If $c_0<0$ then $\eta(r)\lesssim-r^2$, and hence $e^{\eta}\in L^1(\R^4)$. Therefore, if the lemma were false then necessarily we have $c_0=0$ and $e^{4\eta}\not\in L^1(\R^4)$. Then using \eqref{identity} one can show that for any $M>0$ large we have $\D\eta(r)\leq -\frac{M}{r^2}$ for $r\gg 1$. This in turn implies that $\eta(r)\leq -2\log r$ for $r\gg 1$, and hence $e^{4\eta}\in L^1(\R^4)$, a contradiction.\end{proof}

\begin{lem} \label{lem-mu} We have $\mu>0$.
\end{lem}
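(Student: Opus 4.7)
Arguing by contradiction, suppose $\mu=0$. Lemma \ref{lem-mu0} then yields $e^{4\eta}\in L^1(\R^4)$, so $\eta\in C^4(\R^4)$ is a radial solution of $\D^2\eta=e^{4\eta}$ with finite volume; in particular, by Lin's classification, $\Lambda':=\int_{\R^4}e^{4\eta}\,dx\leq \Ls$. The strategy is to apply a Pohozaev identity to $u_\lambda$ (which solves $\D^2 u_\lambda=K(x)e^{4u_\lambda}$ with $K(x)=(\lambda-|x|^p)e^{-|x|^2}$) and pass to the limit $\lambda\to 0$. Since the Gaussian factor $e^{-|x|^2}$ makes the decay hypothesis in Proposition \ref{poho-3} automatic for $u_\lambda$, the identity reads
\[
\frac{\Lambda(\Lambda-\Ls)}{\Ls}\;=\;\tfrac14\!\int_{\R^4}\!(x\cdot\nabla K)e^{4u_\lambda}dx\;=\;-\tfrac{p}{4}\!\int|x|^pe^{-|x|^2}e^{4u_\lambda}dx\,-\,\tfrac12\!\int|x|^2K(x)e^{4u_\lambda}dx.
\]

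The change of variables $x=r_\lambda y$ together with $\lambda r_\lambda^4 e^{4u_\lambda(0)}=1$ gives $\int|x|^qe^{-|x|^2}e^{4u_\lambda}dx=(r_\lambda^q/\lambda)\int|y|^qe^{-r_\lambda^2|y|^2}e^{4\eta_\lambda}dy$ for $q\ge 0$. Combining this with \eqref{eqLambda1} one obtains $\int|x|^pe^{-|x|^2}e^{4u_\lambda}dx=\int e^{-r_\lambda^2|y|^2}e^{4\eta_\lambda}dy-\Lambda$, while the second RHS term of the Pohozaev identity carries the prefactors $r_\lambda^2$ (from the $|x|^2$ piece) and $\mu_\lambda r_\lambda^2$ (from the $|x|^{p+2}$ piece), both of which tend to $0$.

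The crux is to pass to the limit in the three rescaled integrals $\int|y|^qe^{-r_\lambda^2|y|^2}e^{4\eta_\lambda}dy$ for $q\in\{0,2,p+2\}$. Using the $C^4_{\rm loc}$-convergence $\eta_\lambda\to\eta$ together with a uniform-in-$\lambda$ polynomial-decay bound $e^{4\eta_\lambda(y)}\lesssim|y|^{-\Lambda/2\pi^2}$ for $|y|\geq 1$ (a uniform version of Lemma \ref{lemasymp} obtained by direct estimates on the integral equation for $\eta_\lambda$), and exploiting that $\Lambda>\Lambda_{*,p}=(4+p)2\pi^2$ gives the decay rate $\Lambda/2\pi^2>4+p$, dominated convergence yields $\int e^{-r_\lambda^2|y|^2}e^{4\eta_\lambda}dy\to\Lambda'$; the $q=2$ and $q=p+2$ integrals may blow up, but their growth is killed by the vanishing prefactors thanks to the algebraic relation $\mu_\lambda r_\lambda^{\Lambda/2\pi^2-4-p}\to 0$ for $p<4$. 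In the limit the Pohozaev identity becomes
\[
\frac{\Lambda(\Lambda-\Ls)}{\Ls}\;=\;-\tfrac{p}{4}(\Lambda'-\Lambda),\qquad\text{i.e.}\qquad \Lambda'\;=\;\Lambda+\frac{4\Lambda(\Ls-\Lambda)}{p\Ls}.
\]

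Combined with Lin's bound $\Lambda'\leq\Ls$, this forces $\Lambda\leq p\Ls/4=4p\pi^2$. But for $p\in(0,4)$ one has $\Lambda_{*,p}=(4+p)2\pi^2>4p\pi^2$, so the assumption $\Lambda>\Lambda_{*,p}$ (the regime relevant for Theorem \ref{thm1}) contradicts this bound. Hence $\mu>0$. The main obstacle is the uniform polynomial-decay estimate on $\eta_\lambda$ required to justify dominated convergence in the three rescaled integrals; this is the principal technical step of the argument.
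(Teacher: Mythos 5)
Your overall strategy---apply a Pohozaev identity to $u_\la$ with $K=(\la-|x|^p)e^{-|x|^2}$ (for which the decay hypothesis is automatic thanks to the Gaussian factor) and pass to the limit along the rescaling---is genuinely different from the paper's proof, which instead invokes Lin's dichotomy for the limit equation $\D^2\eta=e^{4\eta}$ and kills the two alternatives by separate ODE-type arguments. Your endgame algebra is also correct: granting the limiting identity and Lin's bound $\Lambda'\le\Ls$, one gets $\Lambda\le \frac{p}{4}\Ls<\Lambda_{*,p}$ for $p<4$, contradicting $\Lambda>\Lambda_{*,p}$ (a restriction which the paper's own proof also uses, via the choice of $\delta>0$ with $\Lambda_{*,p}+2\delta\pi^2=\Lambda$).

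However, the step you yourself flag as ``the principal technical step'' is a genuine gap, not a routine verification. The uniform bound $e^{4\eta_\la(y)}\lesssim |y|^{-\Lambda/2\pi^2}$ cannot be obtained ``by direct estimates on the integral equation'': the $O(1)$ in Lemma \ref{lemasymp} depends on the radius beyond which the curvature is nonpositive, and for $\eta_\la$ that radius is $\la^{1/p}/r_\la$, which tends to $\infty$ precisely in the case $\mu=0$ you are trying to exclude. What the integral equation gives uniformly is only a decay rate governed by $\Gamma_\la(t)=\int_{B_t}\(1-\frac{r_\la^p}{\la}|y|^p\)e^{-r_\la^2|y|^2}e^{4\eta_\la}dy$ on a fixed ball, and since $\Gamma_\la$ increases up to $\la^{1/p}/r_\la\to\infty$ and only afterwards decreases to $\Lambda$, the needed lower bound $\Gamma_\la\ge\Lambda$ at a fixed radius requires that a fixed ball capture mass at least $\Lambda$ uniformly in $\la$. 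This holds when $\eta$ is spherical ($\int_{\R^4}e^{4\eta}dx=\Ls>\Lambda$), but fails exactly in Lin's other alternative, where $\D\eta\to-c_0<0$ at infinity and $\Lambda'=\int_{\R^4}e^{4\eta}dx$ may well be smaller than $\Lambda$; in that scenario mass necessarily escapes to infinity (note $\int_{\R^4}e^{-r_\la^2|y|^2}e^{4\eta_\la}dy\ge\Lambda>\Lambda'$ for every $\la$), your convergence claim for the $q=0$ integral is false as stated, and the limiting Pohozaev identity cannot be justified. This is precisely the scenario the paper must treat separately (Case 1 of its proof of Lemma \ref{lem-mu}), via the jump of $\D\eta_\la$ between the radii $R_{1,\la}$ and $R_{2,\la}$. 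To complete your argument you would first have to rule out this alternative, which essentially forces you back into the paper's case analysis.
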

\begin{proof} Assume by contradiction that  $\mu=0$. Then $\eta$ is  a radial solution to
$$\D^2 \eta=e^{4\eta}\quad\text{in }\R^4,$$
with $e^{4\eta}\in L^1(\R^4)$ by Lemma \ref{lem-mu0}. By \cite[Theorem 2.1]{Lin}, either $\eta$ is spherical, namely $\eta(x)=\log\(\frac{2\lambda}{1+\lambda^2|x|^2}\)+\frac{\log 6}{4}$, for some $\lambda>0$, or there exists $c_0>0$ such that
\begin{equation}\label{Detac0}
-c_0:=\lim_{|x|\to\infty}\D\eta(x)<0.
\end{equation}
We shall now show that each of these two cases leads to a contradiction. \\
 
\noindent \textbf{Case 1:} \eqref{Detac0} holds.  

By Lemma \ref{lem1}, for every $\lambda>0$ we can find $0<R_{1,\la}< R_{2,\la}$ such that 
$$\eta_\la(R_{1,\la})=-\frac{c_0}{2},\quad \eta_\la(R_{2,\la})=-\frac{c_0}{4}.$$
Moreover \eqref{convetalambda} implies that $R_{1,\lambda},R_{2,\lambda}\to\infty$ as $\lambda\downarrow 0$.

Again by Lemma \ref{lem1} and \eqref{convetalambda} we have $\Delta \eta_\lambda\le - \frac{c_0}{4}$ in $[0,R_{2,\lambda}]$ and \eqref{identity} implies that $$\eta_\la(r)\leq -\frac{c_0}{32}r^2\quad\text{for }0\leq r\leq R_{2,\la}.$$
Applying \eqref{identity} with $w=\D \eta_\la$, and using \eqref{rlala}, we finally get
\begin{align*}
0< \frac{c_0}{4}&=\Delta \eta_\lambda(R_{2,\lambda})-\Delta \eta_\lambda(R_{1,\lambda})\\
&=\int_{R_{1,\la}}^{R_{2,\la}}\frac{1}{\omega_3 t^3}\int_{B_t}\(1-\frac{r_\lambda^p}{\la}|x|^p\)e^{-r_\la^2|x|^2}e^{4\eta_\la}dxdt\\
&=O\(\int_{R_{1,\la}}^{\infty}\frac{1}{t^3}\int_{B_t}(1+|x|^p)e^{-\frac{c_0}{8}|x|^2}dxdt \)\xrightarrow{\la\to0}0,
\end{align*}
which is a contradiction. \\

\noindent\textbf{Case 2:} $\eta$ is spherical, and in particular
$$\int_{\R^4}e^{4\eta}dx=\Ls.$$
Since $\Lambda<\Ls  $,   we can fix $R_0>0$ such that
$$\int_{B_{R_0}}e^{4\eta}dx>\Lambda.$$
Taking into account that $r_\lambda \to 0$ and by assumption $\frac{r_\lambda^p}{\lambda}\to 0$ as $\lambda\downarrow 0$, we can find $\lambda_0=\lambda_0(R_0)$ such that for
\begin{equation}\label{lem-muEq1}
\int_{B_{R_0}} \(1-\frac{r_\la^p}{\la}|x|^p\)e^{-r_\la^2|x|^2}e^{4\eta_\la}dx\geq \Lambda\quad \text{for }0<\lambda<\lambda_0.
\end{equation}
Setting
$$\Gamma_\la(t):=\int_{B_t}\(1-\frac{r_\la^p}{\la}|x|^p\)e^{-r_\la^2|x|^2}e^{4\eta_\la}dx,$$ we see that $\Gamma_\la(0)=0$, $\Gamma_\la$ is monotone increasing on $[0,\frac{\la^{1/p}}{r_\la}]$, and then it decreases to $\Lambda$ on the interval $[\frac{\la^{1/p}}{r_\la},\infty)$. Together with \eqref{lem-muEq1} it follows that 
\begin{equation}\label{lem-mueq2}
\Gamma_\lambda(t)\geq \Lambda\quad\text{for }t\geq R_0, \;0<\lambda<\lambda_0.
\end{equation}
Applying \eqref{identity} with $R=\infty$, $w=\D\eta_\la$, and recalling that $\lim_{|x|\to\infty}\D\eta_\lambda(x)=0$,  we get for $r\geq R_0$
\begin{align*} \D\eta_\la(r)=-\int_r^\infty \frac{\Gamma(t)}{\omega_3 t^3}dt\leq -\frac{\Lambda}{2\omega_3} \frac{1}{r^2}=-(4+p+\delta)\frac{1}{2r^2},\end{align*}
where $\delta>0$ is such that $\Lambda_{*,p}+2\delta\pi^2=\Lambda$. Hence, for $t>R_0$,
$$\int_{B_t}\D\eta_\la dx\leq\int_{B_t\setminus B_{R_0}}\D\eta_\la dx \le -\frac{4+p+\delta}{4} \omega_3(t^2-R_0^2).$$
Again by \eqref{identity}, as $\eta_\la(R_0)=O(1)$ by \eqref{convetalambda}, we have
\begin{equation}\label{uniform0}
\begin{split}
\eta_\la(r)&\leq  \eta_\la(R_0)+\int_{R_0}^r \frac{-(4+p+\delta)(t^2-R_0^2)}{4t^3}dxdt\\
&=C(R_0) -\frac{4+p+\delta}{4} \log r,\quad r\geq R_0.
\end{split}
\end{equation}
This implies that
\begin{align}\label{uniform}
\lim_{R\to\infty}\lim_{\la\to0}\int_{B_R^c}(1+|x|^p)e^{4\eta_\la}dx=0.
\end{align}
It follows from \eqref{uniform} that as $\lambda\downarrow 0$,
$$\Lambda=\int_{\R^4}\(1-\frac{r_\la^p}{\la}|x|^p\)e^{-r_\la^2|x|^2}e^{4\eta_\la}dx\to \int_{\R^4}e^{4\eta}dx=\Ls,$$
a contradiction. 

This completes the proof of the lemma. 
\end{proof}
   
   \begin{rem} The above proof also works without using the fact that $e^{4\eta}\in L^1(\R^4)$. Indeed, trivially one can find $R_0>0$   as in Case 2, and proceed in a similar way.  \end{rem}
   
\medskip 

\noindent\emph{Proof of the existence part (completed).}    
From Lemma \ref{lem-mu}, choosing $R=\(\frac{4}{\mu}\)^{1/p}$ we obtain for $\lambda$ sufficiently small
$$1-\frac{r_\la^p}{\la} |x|^p\le 1-\frac{\mu}{2}|x|^p \le -\frac{\mu}{4}|x|^p,\quad |x|\ge R,$$
hence from \eqref{relation-4} we obtain
\begin{equation}\label{thm1eq1}
\begin{split}
\int_{B_R^c}\frac{\mu}{4}|x|^p e^{-r_\la^2|x|^2}e^{4\eta_\la}dx &\le -\int_{B_R^c} \(1-\frac{r_\la^p}{\la} |x|^p\) e^{-r_\la^2|x|^2}e^{4\eta_\la}dx\\
&= \int_{B_R} \(1-\frac{r_\la^p}{\la} |x|^p\) e^{-r_\la^2|x|^2}e^{4\eta_\la}dx-\Lambda \le C,
\end{split}
\end{equation}
where in the last inequality we also used that $\eta_\lambda\le 0$. Since, the integrand in $B_R$ is uniformly bounded, it follows at once that
\begin{align}\label{volume}
\int_{\R^4}(1+ |x|^p)e^{-r_\la^2|x|^2}e^{4\eta_\la}dx\leq C.
\end{align}
Moreover \eqref{thm1eq1} also implies
$$\int_{B_R^c} e^{-r_\la^2|x|^2}e^{4\eta_\la}dx \le \frac{C}{R^p} \to 0,\quad \text{as }R\to\infty,$$  
uniformly with respect to $\lambda$, which in turn yields
\begin{equation}\label{thm1eq2}
 \lim_{\lambda\to0} \int_{\R^4}e^{-r_\la^2|x|^2}e^{4\eta_\la}dx=\int_{\R^4}e^{4\eta}dx.
 \end{equation}
By Fatou's lemma
\begin{equation}\label{thm1eq3}
\int_{\R^4}|x|^pe^{4\eta}dx\leq \lim_{\lambda\to0}\int_{\R^4}|x|^pe^{-r_\la^2|x|^2}e^{4\eta_\la}dx.
\end{equation}
Then \eqref{thm1eq2} and \eqref{thm1eq3} give
\begin{align}\label{int-limit}
\int_{\R^4}(1-\mu |x|^p)e^{4\eta}dx\geq\Lambda.
\end{align}

We now proceed as in Case 2 of the proof of Lemma \ref{lem-mu} to show that the above inequality is actually an equality and that  $\eta$ is a normal solution.  Since
$$\frac{\lambda^\frac1p}{r_\la}\to\frac{1}{\mu^\frac1p}>0,$$
we have for $R_0 =2\mu^{-\frac1p}$ and $\lambda_0=\lambda_0(R_0)$ sufficiently small that \eqref{lem-mueq2} holds, hence, as before \eqref{uniform0}-\eqref{uniform} follow. In particular \eqref{uniform} implies that
$$\int_{\R^4}(1-\mu |x|^p)e^{4\eta}dx=\Lambda,$$
and by taking the limit using \eqref{convetalambda} and \eqref{uniform0} we obtain  
\begin{equation}\label{etaint}
\begin{split}
\eta(x)\leftarrow \eta_\lambda(x)&= \frac{1}{8\pi^2}\int_{\R^4}\log\left(\frac{1}{|x-y|}\right)\(1-\frac{r_\la^p}{\la} |y|^p\)e^{-r_\la^2|y|^ 2}e^{4\eta_\la(y)}dy+c_\la\\
&\to \frac{1}{8\pi^2}\int_{\R^4}\log\left(\frac{1}{|x-y|}\right)\(1-\mu |y|^p\)e^{4\eta(y)}dy+c,
\end{split}
\end{equation}
where the identity in the first line follows from \eqref{prop1eq1} and \eqref{defetalambda}. In particular we have shown that $\eta$ is a normal solution.

We now set
$$u(x)=\eta(\rho x)+\log\rho,\quad \rho:=\mu^{-\frac{1}{4p}},$$
and with a simple change of variable we get
\begin{equation}\label{uint}
u(x)=\frac{1}{8\pi^2}\int_{\R^4}\log\left(\frac{1}{|x-y|}\right)\(1-|y|^p\)e^{4u(y)}dy+c
\end{equation}
so that $u$ is a normal solution to \eqref{eq-0}.
\endproof
 
\subsection{Asymptotic behaviour}

\noindent\emph{Proof of \eqref{asymp}} 
Consider the Kelvin transform of $u$ given by \eqref{defKelvin}.
By Proposition \ref{PKelvin} $\tilde u$ satisfies
\begin{equation}\label{Kelvin3}
\begin{split}
\tilde u(x)&=\frac{1}{8\pi^2}\int_{\R^4}\log\(\frac{1}{|x-y|}\)\(1-\frac{1}{|y|^p}\)\frac{e^{4\tilde u(y)}}{|y|^{8-\Lambda/2\pi^2}}dy+c.
\end{split}
\end{equation}
In particular
$$\Delta^2 \tilde u(x)= \(1-\frac{1}{|x|^p}\)\frac{e^{4\tilde u(x)}}{|x|^{8-\Lambda/2\pi^2}}=O\(\frac{1}{|x|^{8+p-\Lambda/2\pi^2}}\),\quad \text{as }|x|\to 0.$$
Observing that for $\Lambda>\Lambda_{*,p}$ we have
$$8+p-\frac{\Lambda}{2\pi^2}=4-\frac{\Lambda-\Lambda_{*,p}}{2\pi^2}<4,$$
we get
\begin{equation}\label{Delta^2uLp}
\Delta^2 \tilde u \in L^q_{loc}(\R^4) \quad \text{for }1\le q<\frac{1}{1-\frac{\Lambda-\Lambda_{*,p}}{8\pi^2}},
\end{equation}
hence by elliptic estimates $\tilde u\in W^{4,q}_{loc}(\R^4)$ with $p$ as in \eqref{Delta^2uLp}, and by the Morrey-Sobolev embedding $\tilde u\in C^{0,\alpha}_{loc}(\R^4)$ for $\alpha\in [0,1]$ such that $\alpha<\frac{\Lambda-\Lambda_{*,p}}{2\pi^2}$. 
Then \eqref{asymp} follows.
Alternatively to the elliptic estimates, the same $C^\alpha$ regularity can also be obtained directly from \eqref{Kelvin3}, using the H\"older inequality and the following estimate: For any $r>0$
\begin{align*}
\int_{B_1}\left| \log|z-h|-\log|z| \right|^r dz\leq C(r)\left\{
\begin{array}{ll}|h|^r&\quad\text{for }r< 4\\
|h|^r|\log|h||&\quad\text{for }r= 4\\
|h|^4&\quad\text{for }r>4,
\end{array}\right.
\end{align*}
for $|h|>0$ small. 
\endproof

\noindent\emph{Proof of \eqref{asymp3}} For $\ell=1,2,3$ we differentiate in \eqref{uint} to get
\begin{equation*}
|\nabla^\ell u(x)|=O\(\int_{\R^4}\frac{1}{|x-y|^\ell} (1+|y|^p)e^{4u(y)}dy\)
\end{equation*}
Since $\Lambda>\Lambda_{*,p}$, by \eqref{asymp}  we have that $$(1+|x|^p)e^{4u(x)}\leq \frac{C}{1+|x|^{4+\delta}},$$ for some $\delta>0$. Therefore, for $|x|$ large
\begin{align*} |\nabla^\ell u(x)| &\leq C\left(   \int_{B_\frac{|x|}{2}}+\int_{B_{2|x|}\setminus B_\frac{|x|}{2}}+\int_{B_{2|x|} ^c}\right) \frac{1}{|x-y|^\ell}\frac{dy}{1+|y|^{4+\delta}} \\ &\leq \frac{C}{|x|^\ell} +\frac{C}{|x|^{4+\delta}}\int_{B_{2|x|}\setminus B_\frac{|x|}{2}}\frac{dy}{|x-y|^\ell}\\ &\leq \frac{C}{|x|^\ell}. \end{align*}
\endproof

\section{Proof of Theorem \ref{thm1b}}

Let $(u_k)$ be a sequence of  radial normal solutions to \eqref{eq-0} with $\Lambda=\Lambda_k\in [\Lambda_{*,p},\Ls)$, i.e.
\begin{equation}\label{eqIk}
u_k(x)=\frac{1}{8\pi^2}\int_{\R^4}\log\(\frac{|y|}{|x-y|}\) (1-|y|^p)e^{4u_k(y)}dy +c_k,
\end{equation}
and
\begin{equation}\label{volk}
\Lambda_k=\int_{\R^4}(1-|x|^p)e^{4u_k(x)}dx\to\bar\Lambda \in[\Lambda_{*,p},\Ls).
\end{equation}
We want to prove the following:

\begin{prop}\label{propu*} Up to a subsequence we have $u_k\to \bar u$ uniformly locally in $\R^4$  where $\bar u$ is a normal solution to \eqref{eq-0} with $\Lambda=\bar \Lambda$
\end{prop}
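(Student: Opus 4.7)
\medskip

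\noindent\emph{Proof plan.} The strategy is to obtain uniform bounds on the sequence $(u_k)$, extract a $C^3_{loc}$-convergent subsequence, and then pass to the limit in the integral equation, ensuring no loss of curvature at infinity.

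First I would observe that because $u_k$ is a radial normal solution, differentiating \eqref{eqIk} yields
$$\Delta u_k(x)=\frac{1}{2\pi^2}\int_{\R^4}\frac{1-|y|^p}{|x-y|^2}e^{4u_k(y)}\,dy,$$
which vanishes as $|x|\to\infty$ by dominated convergence, using $(1\pm |y|^p)e^{4u_k(y)}\in L^1(\R^4)$. Combined with the fact that $r\mapsto\int_{B_r}\Delta^2 u_k\,dx$ is positive on $(0,\infty)$ and tends to $\Lambda_k>0$ as $r\to\infty$, applying \eqref{identity} to $w=\Delta u_k$ forces $\Delta u_k<0$ on all of $\R^4$. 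Hence $u_k$ is radially decreasing and $u_k(0)=\sup u_k$; evaluating \eqref{eqIk} at $x=0$ also gives $c_k=u_k(0)$.

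Next I would prove that $|u_k(0)|\le C$ uniformly. If $u_k(0)\to-\infty$, then since $(1-|y|^p)\le 0$ for $|y|\ge 1$,
$$\Lambda_k \le \int_{B_1}(1-|y|^p)e^{4u_k(y)}\,dy \le |B_1|\,e^{4u_k(0)}\to 0,$$
contradicting $\Lambda_k\ge \Lambda_{*,p}>0$. If $u_k(0)\to+\infty$, I rescale as in Theorem \ref{thm2}: $\eta_k(x)=u_k(r_k x)-u_k(0)+\log 2$ with $r_k=12\,e^{-u_k(0)}$. Standard blow-up analysis and the classification in \cite{Lin} identify the limit as the spherical solution $\log(2/(1+|x|^2))$ of total curvature $\Ls$; the quantization theory of \cite{Rob,MarOpen}, together with the absence of curvature escape at infinity (Lemma \ref{lemLambdabar}), then forces $\Lambda_k\to\Ls$, contradicting $\bar\Lambda<\Ls$.

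With $|u_k(0)|\le C$, the uniform upper bound $u_k\le u_k(0)\le C$ implies that $(1-|x|^p)e^{4u_k}$ is locally uniformly bounded. Moreover, Lemma \ref{lemasymp} (applied with $R_0=1$, uniformly in $k$) yields $u_k(x)\le -\frac{\Lambda_k}{8\pi^2}\log|x|+O(1)$ for $|x|\ge 1$, with an $O(1)$ independent of $k$ since $c_k=u_k(0)$ is bounded. The integral representation \eqref{eqIk} then furnishes a matching pointwise lower bound, so $(u_k)$ is bounded in $L^\infty_{loc}(\R^4)$; elliptic estimates applied to $\Delta^2 u_k\in L^\infty_{loc}$ give uniform $C^{3,\alpha}_{loc}$ bounds and, after extraction, $u_k\to\bar u$ in $C^3_{loc}(\R^4)$. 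Passing to the limit in \eqref{eqIk} requires dominated convergence: on compact subsets of $y$ the local uniform convergence suffices, while on the tail, Lemma \ref{lemLambdabar} provides the uniform integrability of $(1-|y|^p)e^{4u_k}$ and simultaneously ensures $\int_{\R^4}(1-|y|^p)e^{4\bar u}\,dy=\bar\Lambda$ rather than a strict inequality.

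I expect the main obstacle to be Lemma \ref{lemLambdabar}. The delicate point is the borderline case $\bar\Lambda=\Lambda_{*,p}$, where the decay $(1-|y|^p)e^{4u_k(y)}\sim |y|^{-4}$ is barely integrable at infinity, so the tail cannot be controlled by a pointwise bound uniform in $k$. A natural approach is to apply Proposition \ref{PKelvin} and reduce the tail mass for $u_k$ to a concentration problem at the origin for the Kelvin transforms $\tilde u_k$; one then excludes such concentration via the blow-up/classification analysis of \cite{Rob,MarOpen} applied to the transformed equation, in the spirit of Lemma \ref{lemmatildeu}.
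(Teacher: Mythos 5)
Your overall architecture (lower bound on $u_k(0)$ from $\Lambda_k\le|B_1|e^{4u_k(0)}$, upper bound by excluding blow-up, then $C^3_{loc}$ convergence and passage to the limit in \eqref{eqIk} with Lemma \ref{lemLambdabar} handling the tail) matches the paper, and your treatment of the borderline case $\bar\Lambda=\Lambda_{*,p}$ via the Kelvin transform is exactly the right picture. However, there is a genuine gap in the step where you rule out $u_k(0)\to+\infty$. You invoke ``quantization together with the absence of curvature escape at infinity (Lemma \ref{lemLambdabar})'' to force $\Lambda_k\to\Ls$. But Lemma \ref{lemLambdabar} is stated and proved only in the compactness regime: its quantity $\rho=\tilde\Lambda-\bar\Lambda$ presupposes a locally uniform limit $\bar u$, and its key estimate \eqref{37} rests on the uniform bound $\sup_{B_1}\tilde u_k\le C$, which comes from Lemma \ref{lemasymp} with a constant controlled by $c_k=u_k(0)$. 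In the blow-up regime $c_k\to+\infty$, so that bound degenerates (the $O(1)$ in \eqref{37} becomes $O(e^{4u_k(0)})$), and the lemma cannot be applied; using it there is circular, since you would be assuming the very compactness you are trying to establish.

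The paper closes this step differently, and this is where the hypothesis $p\in(0,4)$ enters. The Pohozaev identity \eqref{poho} pins down the total \emph{volume} as an explicit function of $\Lambda_k$, namely $\int_{\R^4}e^{4u_k}dx=\Lambda_k+\frac{4\Lambda_k}{p\Ls}(\Ls-\Lambda_k)$ (equation \eqref{V1}; this also yields the uniform bound \eqref{unibound} on $\int(1+|x|^p)e^{4u_k}dx$, which controls the far tail of the volume via $\int_{B_R^c}e^{4u_k}dx\le CR^{-p}$). If $u_k(0)\to\infty$, spherical blow-up plus monotonicity give $u_k\to-\infty$ locally uniformly away from the origin, hence $\int_{\R^4}e^{4u_k}dx\to\Ls$. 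Comparing with \eqref{V1} forces $\frac{4\bar\Lambda}{p\Ls}=1$, while $\bar\Lambda\ge\Lambda_{*,p}=\frac{(4+p)\Ls}{8}$ gives $\frac{4\bar\Lambda}{p\Ls}\ge\frac{4+p}{2p}>1$ for $p<4$ --- a contradiction that requires no tail control on the \emph{curvature} in the blow-up regime. You should replace your appeal to Lemma \ref{lemLambdabar} in this step by this Pohozaev/volume-quantization argument (or supply an independent proof of tail control valid when $u_k(0)\to\infty$); the rest of your plan then goes through as in the paper.
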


In the following we shall use several times that $u_k$ is radially decreasing. This follows with the same proof of Lemma \ref{lem1}.

\begin{lem}\label{ukgeqC} We have $u_k (0)\geq -C$ where $C$ only depends on $\inf_k \Lambda_k$.
\end{lem}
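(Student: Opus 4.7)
The plan is to exploit the radial monotonicity of $u_k$ together with the volume constraint \eqref{volk}; the whole argument is a one-line estimate once monotonicity is in hand.

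First I would establish that $u_k$ is radially non-increasing, by essentially repeating the argument of Lemma \ref{lem1}. The point is that
$$r\mapsto \int_{B_r}(1-|x|^p)e^{4u_k(x)}dx$$
is strictly increasing on $[0,1]$ and then decreases on $[1,\infty)$ to the strictly positive limit $\Lambda_k \geq \Lambda_{*,p}>0$; hence it is positive for every $r>0$. Applying identity \eqref{identity} with $w=\Delta u_k$ shows that $\Delta u_k(r)$ is monotone increasing in $r$. Combined with the fact that $\Delta u_k(x)\to 0$ as $|x|\to\infty$ (obtained by differentiating \eqref{eqIk} under the integral sign and using $u_k\le C$ from Lemma \ref{lemasymp}, as in the proof of Lemma \ref{lem1}), this forces $\Delta u_k\le 0$ on $\R^4$. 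A second use of \eqref{identity}, now with $w=u_k$, then yields that $u_k$ is radially non-increasing, so $u_k(x)\le u_k(0)$ for every $x\in\R^4$.

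Next I would bound $\Lambda_k$ from above by $e^{4u_k(0)}$. Since $1-|x|^p\le 1$ on $B_1$ and $1-|x|^p<0$ on $B_1^c$,
$$\Lambda_k=\int_{\R^4}(1-|x|^p)e^{4u_k}dx \le \int_{B_1}(1-|x|^p)e^{4u_k}dx \le \int_{B_1}e^{4u_k(0)}dx = |B_1|\,e^{4u_k(0)}.$$
Taking logarithms,
$$u_k(0)\ge \frac14\log\frac{\Lambda_k}{|B_1|}\ge \frac14\log\frac{\inf_k \Lambda_k}{|B_1|},$$
which is the desired bound with a constant $C$ depending only on $\inf_k\Lambda_k>0$.

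There is no real obstacle here: the only substantive step is the verification of radial monotonicity, and even that is a direct transcription of the argument already given for Lemma \ref{lem1} in a slightly different setting. The remainder is a two-line computation using only the sign of $1-|x|^p$.
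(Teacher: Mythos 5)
Your proof is correct and is essentially the paper's: the paper records the radial monotonicity of $u_k$ (by the argument of Lemma \ref{lem1}) just before the statement and then uses exactly the chain $\Lambda_k \le \int_{B_1} e^{4u_k}\,dx \le |B_1|e^{4u_k(0)}$. The only difference is that you spell out the intermediate inequality and the final logarithm, which the paper leaves implicit.
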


\begin{proof}
We have
\begin{equation*}
\Lambda_k = \int_{\R^4} (1-|x|^p)e^{4u_k}dx\le \int_{B_1}e^{4u_k(x)}dx\le |B_1|e^{4u_k(0)},
\end{equation*}
where in the last inequality we used that $u_k$ is monotone decreasing.
\end{proof}

Since $\Lambda_k\in [\La_*,\Ls)$, we have the following Pohozaev identity (see Proposition \ref{poho-3}, which can be applied thanks to Lemma \ref{lemasymp} if $\Lambda\in (\Lambda_{*,p},\Ls)$ and thanks to Lemma \ref{lem-4.6} if $\Lambda=\Lambda_{*,p}$):  \begin{align}\label{poho}
 \frac{\Lambda_k}{\Ls}(\La_k-\Ls)=-\frac{p}{4}\int_{\R^4} |x|^pe^{4u_k}dx.
 \end{align}
 Therefore, by \eqref{eq-0} we get that
 \begin{align}\label{V1}
 \int_{\R^4}e^{4u_k}dx= \La_k+\frac{4\La_k}{p\Ls}(\Ls-\La_k).
 \end{align}
This yields
\begin{align}\label{V2}
\lim_{k\to\infty} \int_{\R^4}e^{4u_k}dx = \bar \La+\frac{4\bar\La}{p\Ls}(\Ls-\bar\La).
\end{align}

\begin{lem}\label{limsupuk}
We have
$$\limsup_{k\to\infty} u_k(0)<\infty.$$
\end{lem}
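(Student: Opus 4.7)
The proof is by contradiction: suppose, passing to a subsequence, that $u_k(0)\to+\infty$. Set $r_k:=e^{-u_k(0)}\to 0$ and define the rescaling $\eta_k(x):=u_k(r_kx)-u_k(0)$. Using $r_k^4e^{4u_k(0)}=1$, one checks that $\eta_k$ is a radial normal solution of
\[
\Delta^2\eta_k=(1-r_k^p|x|^p)e^{4\eta_k}\quad\text{in }\R^4,
\]
with $\eta_k(0)=0$, and a change of variables $y=r_kx$ gives the two identities
\[
\int_{\R^4}(1-r_k^p|x|^p)e^{4\eta_k}\,dx=\Lambda_k,\qquad \int_{\R^4}e^{4\eta_k}\,dx=\int_{\R^4}e^{4u_k}\,dy,
\]
the latter being uniformly bounded by \eqref{V2}. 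Arguing as in Lemma \ref{lem1}, $u_k$ (and hence $\eta_k$) is radially decreasing with $\Delta\eta_k\le 0$ and $\Delta\eta_k\uparrow 0$ at infinity; in particular $\eta_k\le 0$.

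The strategy is to extract a spherical bubble $\eta$ as the $C^4_{\mathrm{loc}}$-limit of a subsequence of $(\eta_k)$ and then pass to the limit in the first mass identity to deduce $\Lambda_k\to\int_{\R^4}e^{4\eta}\,dx=\Ls$, contradicting $\Lambda_k\to\bar\Lambda<\Ls$. Compactness of $(\eta_k)$ in $C^4_{\mathrm{loc}}$ follows by reproducing Lemma \ref{lem3} verbatim (the role of the uniformly bounded quantity $r_\lambda^p/\lambda$ is now played by $r_k^p\to 0$): this yields $|\Delta\eta_k(0)|=O(1)$ and, via ODE theory, a subsequential limit $\eta$ that is a radial decreasing solution of $\Delta^2\eta=e^{4\eta}$ with $\eta(0)=0$, $\Delta\eta\le 0$, and $e^{4\eta}\in L^1(\R^4)$ (by Fatou). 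Lin's classification \cite[Theorem 2.1]{Lin} leaves only two cases: $\eta$ spherical with $\int e^{4\eta}=\Ls$, or $\Delta\eta\to-c_0<0$ at infinity. The second is excluded by copying Case 1 of the proof of Lemma \ref{lem-mu}.

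The main technical obstacle is the uniform tail bound
\[
\int_{B_R^c}(1+r_k^p|x|^p)e^{4\eta_k}\,dx\to 0\quad\text{as }R\to\infty,
\]
needed to pass to the limit in the mass identity; its difficulty stems from the fact that in the original variables the same integral is over $B_{Rr_k}^c$ with $Rr_k\to 0$, so it does not naively vanish. This will be handled by repeating Case 2 of the proof of Lemma \ref{lem-mu}: fix $\Lambda^*\in(\Lambda_{*,p},\bar\Lambda)$ (when $\bar\Lambda>\Lambda_{*,p}$; the borderline case $\bar\Lambda=\Lambda_{*,p}$ requires the log-log refinement as in Lemma \ref{lem-4.6}) and $R_0$ such that $\int_{B_{R_0}}e^{4\eta}>\Lambda^*$; monotonicity of $\Gamma_k(t):=\int_{B_t}(1-r_k^p|x|^p)e^{4\eta_k}$ then implies $\Gamma_k\ge\Lambda^*$ on $[R_0,\infty)$ for $k$ large, and \eqref{identity} applied to $\Delta\eta_k$ yields the logarithmic decay $\eta_k(r)\le C(R_0)-\tfrac{\Lambda^*}{8\pi^2}\log r$ on $[R_0,\infty)$ as in \eqref{uniform0}. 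Since $\Lambda^*/2\pi^2>4+p$, this delivers the required uniform tail bound as in \eqref{uniform}, and passing to the limit in the mass identity gives $\Lambda_k\to\Ls$, the desired contradiction.
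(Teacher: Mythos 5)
Your overall strategy (rescale, extract a bubble, and pass to the limit in the mass identity via a uniform tail estimate in the spirit of Case 2 of Lemma \ref{lem-mu}) is sound when $\liminf_k\Lambda_k>\Lambda_{*,p}$, but it has a genuine gap precisely in the borderline case $\bar\Lambda=\Lambda_{*,p}$, which you acknowledge but do not resolve and which is the case this lemma is most needed for (it is how Theorem \ref{thm1b} produces the solution at $\Lambda=\Lambda_{*,p}$). The obstruction is quantitative: your tail bound rests on $\Gamma_k(t)\ge\Lambda^*$ for all $t\ge R_0$ with some $\Lambda^*>\Lambda_{*,p}$, but $\Gamma_k$ only decreases to $\Lambda_k$ on $[r_k^{-1},\infty)$, so on that outer range you can only use $\Gamma_k\ge\Lambda_k$, and if $\Lambda_k\downarrow\Lambda_{*,p}$ the resulting decay $e^{4\eta_k(r)}\lesssim r^{-(4+p)-\delta_k}$ with $\delta_k=\frac{\Lambda_k-\Lambda_{*,p}}{2\pi^2}\to0$ produces a tail of size roughly $r_k^{c}\delta_k^{-1}r_k^{p+\delta_k}$, which need not vanish since $\delta_k$ and $r_k$ are a priori unrelated. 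Invoking ``the log-log refinement as in Lemma \ref{lem-4.6}'' does not close this: that lemma is a non-uniform asymptotic statement for a \emph{single} solution with mass exactly $\Lambda_{*,p}$, proved via the Kelvin-transform degeneration of Lemma \ref{lemmatildeu}, and upgrading it to a uniform bound along a sequence with $\Lambda_k>\Lambda_{*,p}$ is a substantial unproven claim, not a routine adaptation.

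The paper's proof avoids this entirely by never trying to pass to the limit in the $Q$-curvature mass $\int(1-|x|^p)e^{4u_k}\,dx$. Instead it compares two computations of the \emph{volume}: the Pohozaev identity \eqref{V1} pins down $\int_{\R^4}e^{4u_k}\,dx=\Lambda_k+\frac{4\Lambda_k}{p\Ls}(\Ls-\Lambda_k)$, while blow-up plus the quantization result \cite[Theorem 2]{MarOpen} (together with monotonicity, which kills the volume on compact sets away from $0$, and the bound \eqref{unibound}, which gives $\int_{B_R^c}e^{4u_k}dx\le R^{-p}\int|x|^pe^{4u_k}dx\le CR^{-p}$ and hence controls the volume tail without any pointwise decay estimate) forces $\int_{\R^4}e^{4u_k}\,dx\to\Ls$. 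Equating the two limits yields $\frac{4\bar\Lambda}{p\Ls}=1$, impossible since $\bar\Lambda\ge\Lambda_{*,p}$ gives $\frac{4\bar\Lambda}{p\Ls}\ge\frac{4+p}{2p}>1$ for $p<4$. This works uniformly for all $\bar\Lambda\in[\Lambda_{*,p},\Ls)$, borderline included. I recommend you either adopt this volume/Pohozaev comparison, or restrict your argument to $\liminf_k\Lambda_k>\Lambda_{*,p}$ and supply a genuinely uniform tail estimate for the borderline case.
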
 
\begin{proof} It follows from {\eqref{volk}}, \eqref{poho} and \eqref{V1} that
\begin{align}\label{unibound}
\limsup_{k\to \infty}\int_{\R^4}(1+|x|^p)e^{4u_k}dx<\infty.
\end{align}
Then, differentiating in \eqref{eqIk}, integrating over $B_1$ and using Fubini's theorem and \eqref{unibound}, one obtains
\begin{equation}\label{nablauk}
\int_{B_1}|\nabla u_k|dx\leq C \int_{\R^4}\(\int_{B_1}\frac{1}{|x-y|}dx \)(1+|y|^p)e^{4u_k(y)}dy \le C.
\end{equation}
Hence, if (up to a subsequence) $u_k(0)\to\infty$ as $k\to\infty$, by \cite[Theorem 2]{MarOpen} (see also \cite{HydCV} and \cite{Rob}) the blow-up at the origin is spherical, i.e.
$$u_k(r_kx)-u_k(0)+\log(2)=: \eta_k(x)\to \log\frac{2}{1+|x|^2},\quad \text{locally uniformly},$$
where $r_k:= 12e^{-u_k(0)}\to 0$ as $k\to\infty$,
and, we have quantization of mass in the sense that
$$\lim_{k\to\infty}\int_{B_\frac12}(1-|x|^p)e^{4u_k}dx= \Ls.$$
As $u_k$ is monotone decreasing, we  have  that  $u_k\to-\infty$ locally uniformly in $\R^4\setminus\{0\}$. Consequently, using \eqref{unibound} we get
\begin{align}\label{V2b}
\lim_{k\to\infty}\int_{\R^4}e^{4u_k}dx =\Ls.
\end{align}
On the other hand, comparing \eqref{V2b} with \eqref{V2}, and recalling that $\Lambda_{*,p}\le \bar\Lambda<\Ls$ and $\La_{*,p}=\frac18(4+p)\Ls$, we obtain
$$1=\frac{4\bar\Lambda}{p\Ls}\ge \frac{4\Lambda_{*,p}}{p\Ls}=\frac{4+p}{2p}>1$$
for $p\in (0,4)$, a contradiction.
\end{proof}

\begin{lem}\label{lemconv}
We have $u_k\to \bar u$, where $\bar u$ is a normal solution to \eqref{eq-0} for some $\Lambda=\tilde \Lambda \ge \bar\Lambda$.
\end{lem}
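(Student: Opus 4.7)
The plan is to extract a $C^3_{loc}$-convergent subsequence of $(u_k)$, verify that the limit $\bar u$ satisfies the integral equation \eqref{eqI} with $K=1-|x|^p$, and use Fatou's lemma to prove $\tilde\Lambda\ge\bar\Lambda$.

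First I would establish uniform local $L^\infty$ bounds. By the same argument as in Lemma \ref{lem1}, each $u_k$ is radially decreasing, so $u_k\le u_k(0)$; evaluating \eqref{eqIk} at $x=0$ gives $c_k=u_k(0)$, which is uniformly bounded thanks to Lemmas \ref{ukgeqC} and \ref{limsupuk}. The bound from below on $B_R$ follows from
\[
|u_k(x)-c_k|\le\frac{1}{8\pi^2}\int_{\R^4}\left|\log\frac{|y|}{|x-y|}\right|(1+|y|^p)e^{4u_k(y)}dy,
\]
splitting into $|y|\le 2R$ (handled by $e^{4u_k}\le e^{4C}$ together with the local integrability of $\log|\cdot|$) and $|y|\ge 2R$ (handled by $|\log(|y|/|x-y|)|\le\log 2$ and the uniform $L^1$ bound \eqref{unibound}). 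Since $\Delta^2 u_k=(1-|x|^p)e^{4u_k}$ is then uniformly bounded in $L^\infty_{loc}$, standard biharmonic interior estimates yield uniform $C^{3,\alpha}_{loc}$ bounds, and a diagonal argument gives $u_k\to\bar u$ in $C^3_{loc}(\R^4)$ along a subsequence, with $\Delta^2\bar u=(1-|x|^p)e^{4\bar u}$.

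Next, to show $\bar u$ is normal, I invoke Lemma \ref{lemasymp} with $R_0=1$ applied to each $u_k$: the $O(1)$ constants depend only on $|c_k|$ and $\int_{B_1}e^{4u_k}$, which are bounded uniformly, yielding
\[
u_k(x)\le -\frac{\Lambda_k}{8\pi^2}\log|x|+C\le -(4+p)\log|x|+C,\quad |x|\ge 1,
\]
uniformly in $k$; hence $(1+|y|^p)e^{4u_k(y)}\le C|y|^{-4}$ on $\{|y|\ge 1\}$ uniformly in $k$, and by Fatou $(1+|y|^p)e^{4\bar u}\in L^1(\R^4)$. For fixed $x$ I split \eqref{eqIk} into a $B_R$ part and its complement. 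The $B_R$ part converges to the corresponding integral for $\bar u$ by dominated convergence. For $R\ge 2|x|$ the tail is controlled via the cancellation $|\log(|y|/|x-y|)|\le C|x|/|y|$ together with the uniform pointwise decay above, giving
\[
\left|\int_{B_R^c}\log\frac{|y|}{|x-y|}(1-|y|^p)e^{4u_k}dy\right|\le C\frac{|x|}{R},
\]
uniformly in $k$. Letting first $k\to\infty$ and then $R\to\infty$ produces the integral representation for $\bar u$ with constant $\bar c=\lim c_k$, so $\bar u$ is a normal solution.

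Finally, I decompose $\Lambda_k=\int_{B_1}(1-|x|^p)e^{4u_k}dx+\int_{B_1^c}(1-|x|^p)e^{4u_k}dx$. The first summand tends to $\int_{B_1}(1-|x|^p)e^{4\bar u}dx$ by dominated convergence; on $B_1^c$ the integrand $(|x|^p-1)e^{4u_k}$ is nonnegative, so Fatou gives $\int_{B_1^c}(1-|x|^p)e^{4\bar u}dx\ge\limsup_k\int_{B_1^c}(1-|x|^p)e^{4u_k}dx$. Summing proves $\tilde\Lambda\ge\bar\Lambda$. The main obstacle is the passage to the limit in \eqref{eqIk}: since the kernel $\log(|y|/|x-y|)$ does not decay and $K$ changes sign, a bare $L^1$ bound on $(1+|y|^p)e^{4u_k}$ is not enough; what is needed is the \emph{sharp} uniform asymptotic of Lemma \ref{lemasymp} (which is only barely available because $\Lambda_k\ge\Lambda_{*,p}=2\pi^2(4+p)$), combined with the logarithmic cancellation at infinity, to justify exchanging the limits in $k$ and $R$.
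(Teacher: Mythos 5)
Your proposal is correct and follows essentially the same route as the paper: uniform local bounds from $c_k=u_k(0)=O(1)$ and the integral representation, a $C^3_{loc}$ limit by elliptic estimates, the tail estimate $O(|x|/R)$ from the kernel cancellation together with \eqref{unibound} to pass to the limit in \eqref{eqIk}, and Fatou (split at $B_1$ where the sign of $1-|x|^p$ changes) for $\tilde\Lambda\ge\bar\Lambda$. The only inessential difference is your closing remark that the uniform pointwise decay from Lemma \ref{lemasymp} is needed for the limit exchange: as your own display shows, the bound $\bigl|\log(|y|/|x-y|)\bigr|\le C|x|/R$ on $B_R^c$ combined with the uniform $L^1$ bound \eqref{unibound} already suffices, which is exactly what the paper uses.
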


\begin{proof}  Since $u_k\leq u_k(0)=O(1)$ by Lemma \ref{ukgeqC} and Lemma \ref{limsupuk},  we have
$$\D^2 u_k=O_R(1)\quad \text{on }B_R.$$
Differentiating under the integral sign in \eqref{eqIk}, integrating over $B_R$ and using Fubini's theorem, together with \eqref{unibound}, we get
$$\int_{B_R}|\D u_k|dx\leq CR^2,\quad\text{for every }R>1.$$
Hence, by elliptic estimate, up to a subsequence,  $u_k\to \bar u$ in $C^{3}_{loc}(\R^4)$. 


To prove that $\bar u $ is normal, first note that the constant $c_k=u_k(0)$ in \eqref{eqIk}. Moreover For a fixed $x\in \R^4$ we have as $R\to\infty$  
$$\int_{B_R^c}\log\left(\frac{|y|}{|x-y|}\right)(1-|y|^p)e^{4u_k(y)}dy=O\(\frac{|x|}{R}\)\int_{B_R^c} (1+|y|^p)e^{4u_k(y)}dy=O\(\frac{|x|}{R}\),$$
thanks to \eqref{unibound}.  Therefore, using the convergence $u_k\to\bar u$ in $C^0_{loc}(\R^4)$, we conclude from \eqref{eqIk} that 
\begin{equation*}
\begin{split}
\bar u(x)& \xleftarrow{k\to\infty} u_k(x) =\frac{1}{8\pi^2}\int_{B_R}\log\left(\frac{|y|}{|x-y|}\right)(1-|y|^p)e^{4 u_k(y)}dy+O\(\frac{|x|}{R}\)+ u_k(0)\\
&\xrightarrow{k\to\infty} \frac{1}{8\pi^2}\int_{B_R}\log\left(\frac{|y|}{|x-y|}\right)(1-|y|^p)e^{4\bar u(y)}dy+O\(\frac{|x|}{R}\)+ \bar u(0)\\
&\xrightarrow{R\to\infty} \frac{1}{8\pi^2}\int_{\R^4}\log\left(\frac{|y|}{|x-y|}\right)(1-|y|^p)e^{4\bar u(y)}dy+ \bar u(0),
\end{split}
\end{equation*}
where in the last line we used dominated convergence, which is possible since $\log\left(\frac{|y|}{|x-y|}\right)=O(1)$ as $|y|\to\infty$, and $(1+|\cdot|^p)e^{4\bar u}\in L^1(\R^4)$ by \eqref{unibound} and  Fatou's lemma.

Still by Fatou's lemma, we also get
$$\tilde\Lambda:=\int_{\R^4}(1-|x|^p)e^{4\bar u}dx\geq \lim_{k\to\infty}\int_{\R^4}(1-|x|^p)e^{4 u_k}dx = \bar \La.$$
\end{proof}

\begin{lem}\label{lemLambdabar}
We have $\tilde\Lambda=\bar \Lambda$.
\end{lem}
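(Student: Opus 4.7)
The strategy is to rule out loss of curvature at infinity by combining the Pohozaev identities for both $u_k$ and $\bar u$ with a tightness argument that exploits the $|x|^p$ weight.

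First I would establish the Pohozaev identity for $\bar u$. Since $\bar u$ is a normal solution with $\tilde\Lambda \ge \bar\Lambda \ge \Lambda_{*,p}$, and Corollary \ref{cornonex} excludes $\tilde\Lambda \ge \Ls$, we have $\tilde\Lambda \in [\Lambda_{*,p},\Ls)$. Then Lemma \ref{lemasymp} (or Lemma \ref{lem-4.6} in the borderline case $\tilde\Lambda = \Lambda_{*,p}$, using radiality of $\bar u$) verifies \eqref{cond-poho}, so Proposition \ref{poho-3} together with the argument of \eqref{V1} yields
$$\int_{\R^4}|x|^pe^{4\bar u}dx = \frac{4\tilde\Lambda(\Ls-\tilde\Lambda)}{p\Ls}, \qquad \int_{\R^4}e^{4\bar u}dx = \tilde\Lambda + \frac{4\tilde\Lambda(\Ls-\tilde\Lambda)}{p\Ls}.$$

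Next, the $C^3_{loc}$ convergence $u_k\to\bar u$ from Lemma \ref{lemconv} gives, for each fixed $R > 0$, $\int_{B_R}e^{4u_k}dx\to \int_{B_R}e^{4\bar u}dx$ and $\int_{B_R}|x|^pe^{4u_k}dx \to\int_{B_R}|x|^pe^{4\bar u}dx$. Combined with \eqref{V2} and the analogous Pohozaev-based limit $\int_{\R^4}|x|^pe^{4u_k}dx \to \frac{4\bar\Lambda(\Ls-\bar\Lambda)}{p\Ls}$ from \eqref{poho}, the tail integrals
$$M(R):=\lim_{k\to\infty}\int_{B_R^c}e^{4u_k}dx, \qquad M_p(R):=\lim_{k\to\infty}\int_{B_R^c}|x|^pe^{4u_k}dx$$
are well defined, nonincreasing in $R$, and converge as $R\to\infty$ to nonnegative limits $\delta_0$ and $\delta_p$ (nonnegativity by Fatou). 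The trivial inequality $|x|^p\ge R^p$ on $B_R^c$ gives $M_p(R)\ge R^p M(R)$ for all $R\ge 1$, so if $\delta_0>0$ then $M_p(R)\ge R^p\delta_0\to+\infty$, contradicting the finiteness of $\delta_p$. Hence $\delta_0=0$, i.e.\ $\int_{\R^4} e^{4u_k}dx \to \int_{\R^4} e^{4\bar u}dx$.

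Matching this last equality with the two Pohozaev expressions for $\int_{\R^4}e^{4u}dx$ yields $g(\bar\Lambda) = g(\tilde\Lambda)$, where $g(\Lambda):=\Lambda+\frac{4\Lambda(\Ls-\Lambda)}{p\Ls}$. A direct check shows that $g$ attains its maximum exactly at $\Lambda=\Lambda_{*,p}$ and is strictly decreasing on $[\Lambda_{*,p},\Ls)$; combined with $\bar\Lambda,\tilde\Lambda\in[\Lambda_{*,p},\Ls)$ and $\tilde\Lambda\ge\bar\Lambda$ this forces $\tilde\Lambda=\bar\Lambda$. The main delicacy I expect is the verification of \eqref{cond-poho} for $\bar u$ in the borderline case $\tilde\Lambda=\Lambda_{*,p}$, which relies on the sharper radial estimate of Lemma \ref{lem-4.6}; the rest is a straightforward combination of Fatou, the trivial weighted inequality, and the monotonicity of $g$.
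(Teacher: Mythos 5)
Your proof is correct, and it is genuinely different from the one in the paper. The paper argues by contradiction on the lost mass $\rho=\tilde\Lambda-\bar\Lambda>0$: it Kelvin-transforms $u_k$, turns loss at infinity into concentration at the origin, and then uses the integral representation to show the mass $\rho$ can neither stay at positive distance from the origin (the case $\delta_k\not\to0$, where $\tilde u_k$ would stay bounded) nor concentrate in the dyadic annuli $B_{2\ve_k}\setminus B_{\ve_k}$ (where the upper bound \eqref{uLambda0} kills it) --- a purely local concentration analysis. You instead read the Pohozaev identity as a conservation law, $\int_{\R^4}e^{4u}dx=g(\Lambda)$ with $g(\Lambda)=\Lambda+\tfrac{4\Lambda(\Ls-\Lambda)}{p\Ls}$, observe that no \emph{unweighted} volume can escape to infinity (your inequality $M_p(R)\ge R^pM(R)$ is exactly right, since $M$ is nonincreasing and $M_p$ is bounded), and conclude from the injectivity of $g$ on $[\Lambda_{*,p},\Ls)$; the computation that $g$ peaks precisely at $\Lambda_{*,p}$ checks out, and the needed ingredients (the identity \eqref{poho}--\eqref{V2} for $u_k$, the bound $\tilde\Lambda<\Ls$ from Corollary \ref{cornonex}, and \eqref{cond-poho} for $\bar u$ via Lemma \ref{lemasymp} when $\tilde\Lambda>\Lambda_{*,p}$ and via Lemma \ref{lemmatildeu} or \ref{lem-4.6} when $\tilde\Lambda=\Lambda_{*,p}$) are all available at this point with no circularity. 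What your route buys is brevity and a conceptual explanation (the limit must sit on the same Pohozaev parabola as the approximating sequence); what it costs is robustness: it leans on the exact algebraic form of $g$, hence on the homogeneity of the perturbation $|x|^p$, on the coincidence that the vertex of $g$ falls exactly at the endpoint $\Lambda_{*,p}$ of the admissible range, and on having the Pohozaev identity for the limit $\bar u$ in the borderline case --- whereas the paper's local argument would survive in settings where no such identity is at hand.
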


\begin{proof} We assume by contradiction that $\tilde\Lambda>\bar \La$.  Since $u_k\to \bar u$ in $C^0_{loc}(\R^4)$, this is equivalent to
\begin{equation}\label{defrho}
\rho:=\lim_{R\to\infty}\lim_{k\to\infty}\int_{B_R^c}(|x|^p-1)e^{4u_k}dx=\lim_{R\to\infty}\lim_{k\to\infty}\int_{B_R^c}|x|^pe^{4u_k}dx=\tilde\La-\bar \La>0.
\end{equation}
We consider the Kelvin transform
$$\tilde u_k(x)=u_k\(\frac{x}{|x|^2}\)-\frac{\La_k}{8\pi^2} \log |x|, \quad x\ne 0.$$
{By Proposition \ref{PKelvin} we have
$$\tilde u_k(x)=\frac{1}{8\pi^2}\int_{B_1}\log\left(\frac{1}{|x-y|}\right)\(1-\frac{1}{|y|^{p}}\)\frac{e^{4\tilde u_k(y)}}{|y|^{4-p-\delta_k}}dy,\quad \delta_k:=\frac{\La_k-\La_{*,p}}{2\pi^2}.$$
In fact, with the same proof of \eqref{36} we obtain}
\begin{equation}\label{37}
\tilde u_k(x)=-\frac{1}{8\pi^2}\int_{B_1}\log\left(\frac{1}{|x-y|}\right)\frac{e^{4\tilde u_k(y)}}{|y|^{4-\delta_k}}dy+O(1)\quad\text{for } x\in B_1.
\end{equation}
If $\delta_k\not\to 0$ then from \eqref{37} we easily see that $\tilde u_k=O(1)$ in $B_1$, a contradiction to our assumption that $\rho>0$. Let us then assume that $\delta_k\to 0$, i.e. $\Lambda_k\to \Lambda_{*,p}$, and let $\ve_k>0$ be such that
$$\int_{B_{\ve_k}}\frac{e^{4\tilde u_k(y)}}{|y|^{4-\delta_k}}dy=\frac\rho2.$$
Then clearly $\ve_k\to 0$ as $k\to\infty$. Using that $\log\(\frac{1}{|x-y|}\)=\log\(\frac{1}{|x|}\)+O(1)$ for $|y|\leq\ve_k$, $ |x|\geq 2\ve_k$, and that $\log\(\frac{1}{|x-y|}\)$ is lower bounded for $y\in B_1$ and $x\to 0$, we get
\begin{equation}\label{eq64}
\begin{split}
\tilde u_k(x)&=-\frac{\log(1/|x|)}{8\pi^2}\int_{B_{\ve_k}}\frac{e^{4\tilde u_k(y)}}{|y|^{4-\delta_k}}dy -\frac{1}{8\pi^2} \int_{B_1\setminus B_{\ve_k}} \log\(\frac{1}{|x-y|}\)\frac{e^{4\tilde u_k(y)}}{|y|^{4-\delta_k}}dy +O(1)\\
& \leq-\frac{\rho}{16\pi^2}\log\(\frac{1}{|x|}\)+C\quad\text{for }2\ve_k\leq |x|\leq 1,
\end{split}
\end{equation}
which, in particular implies
\begin{equation}\label{uLambda0}
\lim_{r\to0}\lim_{k\to\infty}\sup_{B_r}\tilde u_k=-\infty.
\end{equation}
From \eqref{eq64} we immediately infer
$$\lim_{r\to0}\lim_{k\to\infty}\int_{B_r\setminus B_{2\ve_k}}\frac{e^{4\tilde u_k(y)}}{|y|^{4-\delta_k}}dy=0,$$
hence, also recalling \eqref{defrho},
$$\lim_{k\to\infty}\int_{B_{2\ve_k}}\frac{e^{4\tilde u_k(y)}}{|y|^{4-\delta_k}}dy=\rho.$$
This, and using \eqref{uLambda0} we get
$$\frac\rho2= \lim_{k\to\infty}\int_{B_{2\ve_k}\setminus B_{\ve_k}}\frac{e^{4\tilde u_k(y)}}{|y|^{4-\delta_k}}dy=o(1)\int_{B_{2\ve_k}\setminus B_{\ve_k}}\frac{dy}{|y|^4}=o(1),\quad \text{as }k\to\infty,$$
contradiction.
\end{proof}

\noindent\emph{Proof of Theorem \ref{thm1b}.} With Lemma \ref{lemconv} and Lemma \ref{lemLambdabar} we have the desired convergence (up to a subsequece) of $u_k$ to $\bar u$, a normal solution of \eqref{eq-0} with $\Lambda=\bar\Lambda$. The asymptotic behaviour \eqref{asymp2} follows from Lemma \ref{lem-4.6}, while for \eqref{asymp3}, the same proof used for $\Lambda\in (\Lambda_{*,p},\Ls)$ also works for the case $\Lambda=\Lambda_{*,p}$.
\endproof

\section{Proof of Theorem \ref{thm2}}

\begin{lem}\label{ukbdd} Let $(u_k)$ be a sequence solving \eqref{eq-0} with $\Lambda=\Lambda_k\uparrow \Ls$. Then we have $u_k(0)\to\infty$ as $k\to\infty$.
\end{lem}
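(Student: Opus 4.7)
I would argue by contradiction, supposing that along a subsequence $u_k(0)$ remains bounded. Combined with the lower bound $u_k(0)\ge -C$ from Lemma~\ref{ukgeqC} and the fact that $u_k$ is radially decreasing (by the argument of Lemma~\ref{lem1}, which goes through unchanged here), this yields a uniform pointwise bound $u_k\le C$ on all of $\R^4$.

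The first ingredient is the Pohozaev identity \eqref{poho}, which applies to each $u_k$ because the asymptotic decay \eqref{asymp} from Theorem~\ref{thm1} implies \eqref{cond-poho} (we need $\Lambda_k>\Lambda_{*,p}$, which holds eventually since $\Lambda_k\uparrow\Ls$). As $\Lambda_k\uparrow\Ls$ the identity forces
\[
\int_{\R^4}|x|^p e^{4u_k}\,dx \;=\; \frac{4\Lambda_k}{p\Ls}\bigl(\Ls-\Lambda_k\bigr)\;\longrightarrow\; 0,
\]
and combined with \eqref{V1} gives the uniform bound $\int_{\R^4}(1+|x|^p)e^{4u_k}\,dx \le C$.

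Next I would use the integral representation \eqref{eqIk}, noting that $c_k=u_k(0)$ (set $x=0$ in \eqref{eqIk}). Splitting $\R^4=\{|y|\le 2R\}\cup\{|y|>2R\}$ for $x\in B_R$, the first region contributes $O_R(1)$ because $\log(|y|/|x-y|)$ is locally integrable and $(1-|y|^p)e^{4u_k}$ is uniformly bounded on $B_{2R}$; the second region contributes at most
\[
\int_{|y|>2R}\frac{CR}{|y|}(1+|y|^p)e^{4u_k}\,dy \;\le\; C\int_{\R^4}(1+|y|^p)e^{4u_k}\,dy \;\le\; C,
\]
using $R/|y|\le 1/2$ and $|\log(|y|/|x-y|)|\le CR/|y|$ when $|y|>2R$. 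This shows $u_k-u_k(0)$ is locally uniformly bounded, and together with the bound on $u_k(0)$ gives a local $L^\infty$ bound on $u_k$. Elliptic regularity applied to $\Delta^2 u_k=(1-|x|^p)e^{4u_k}$ then delivers uniform local $C^{3,\alpha}$ bounds, so a subsequence converges to some $\bar u\in C^3(\R^4)$ in $C^3_{loc}$. Finally, by dominated convergence (using the local uniform upper bound on $u_k$),
\[
\int_{B_R}|x|^p e^{4\bar u}\,dx \;=\; \lim_{k\to\infty}\int_{B_R}|x|^p e^{4u_k}\,dx \;\le\; \lim_{k\to\infty}\int_{\R^4}|x|^p e^{4u_k}\,dx \;=\; 0
\]
for every $R>0$. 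Since $|x|^p>0$ for $x\ne 0$ and $\bar u$ is continuous and finite, this is impossible, and the contradiction forces $u_k(0)\to\infty$.

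The step I expect to be most delicate is the $L^\infty_{loc}$ bound on $u_k$: it rests on the uniform $L^1$ bound for $(1+|y|^p)e^{4u_k}$ delivered by the Pohozaev identity, which is precisely the new ingredient relative to earlier lemmas in the section and which prevents the log-kernel in \eqref{eqIk} from being uncontrollable at infinity.
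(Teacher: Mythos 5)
Your argument is correct, and its first half is essentially the paper's: the paper handles the case of bounded $u_k(0)$ by citing Lemma \ref{lemconv} (whose proof is exactly your compactness step — the uniform bound $\int_{\R^4}(1+|x|^p)e^{4u_k}\,dx\le C$ from the Pohozaev identity, the splitting of the log-kernel in \eqref{eqIk} to get local uniform bounds, and elliptic estimates to extract a $C^3_{loc}$ limit $\bar u$). Where you genuinely diverge is the final contradiction. The paper pushes further through Lemma \ref{lemconv} to conclude that $\bar u$ is a \emph{normal} solution of \eqref{eq-0} with total curvature $\ge\Ls$ (the inequality via Fatou), and then invokes the nonexistence result of Theorem \ref{thm0}. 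You instead use the Pohozaev identity quantitatively: $\int_{\R^4}|x|^pe^{4u_k}\,dx=\frac{4\Lambda_k}{p\Ls}(\Ls-\Lambda_k)\to0$, while locally uniform convergence forces $\int_{B_R}|x|^pe^{4u_k}\,dx\to\int_{B_R}|x|^pe^{4\bar u}\,dx>0$ since $\bar u$ is finite and continuous. This is a bit more economical: you never need to verify that the limit is normal (the passage to the limit in the integral equation), nor to route through Theorem \ref{thm0} — although both contradictions ultimately trace back to the same Pohozaev identity. Two minor remarks: to justify \eqref{cond-poho} for each fixed $u_k$ it is slightly cleaner to cite Lemma \ref{lemasymp} (valid for any normal solution, giving $u_k\le-\frac{\Lambda_k}{8\pi^2}\log|x|+O(1)$ with $\frac{\Lambda_k}{2\pi^2}>4+p$ once $\Lambda_k>\Lambda_{*,p}$) rather than \eqref{asymp} of Theorem \ref{thm1}, though your version is also fine since $\Lambda_k>\Lambda_{*,p}$ eventually; and in the last display uniform convergence on $B_R$ already suffices, dominated convergence is not needed.
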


\begin{proof} By Lemma \ref{ukgeqC} we have $u_k(0)\ge -C$. Assume by contradiction that, up to a subsequence, $u_k(0)\to  \ell \in \R$. Then, by Lemma \ref{lemconv} (or, rather, following its proof) we have $u_k\to \bar u$, normal solution to \eqref{eq-0} for some $\Lambda\ge \Ls$, contradicting Theorem \ref{thm0}.
\end{proof}

Differentiating under the integral sign in \eqref{eqIk} and integrating over $B_1$ we obtain \eqref{nablauk}. By Lemma \ref{ukbdd}, the sequence $(u_k)$ blows up at the origin. This, \eqref{nablauk} and \cite[Theorem 2]{MarOpen} imply \eqref{convetak}.
This completes the proof of Theorem \ref{thm2}. \hfill$\square$

\section{Proof of Theorem \ref{thm-positive2}}

We start by looking for normal solutions with prescribed value at the origin.

\begin{thm} \label{thm-positive} For every $p>0$ and $\rho\in \R$ there exists a unique radially symmetric normal solution to
\begin{align}\label{eq-rho}
\D^2u=(1+|x|^p)e^{4u}\quad\text{in }\R^4,\quad u(0)=\rho,\quad (1+|x|^p)e^{4u}\in L^1(\R^4). \end{align}
\end{thm}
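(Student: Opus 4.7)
I plan to prove the theorem by a shooting argument in the parameter $\alpha:=\Delta u(0)$. Radial symmetry forces $u'(0)=(\Delta u)'(0)=0$, so the Cauchy problem
\begin{equation*}
\bigl(r^3(\Delta u)'\bigr)'=r^3(1+r^p)e^{4u},\qquad u(0)=\rho,\qquad \Delta u(0)=\alpha
\end{equation*}
has a unique maximal radial solution $u_\alpha$ on $[0,R_\alpha)$, and the positive forcing makes $v_\alpha:=\Delta u_\alpha$ strictly increasing on $(0,R_\alpha)$. Differentiating \eqref{eqIbis} one sees that any radial normal solution must have $\Delta u(r)\uparrow 0$ as $r\to\infty$, so the correct shooting parameter must satisfy $\alpha<0$ with $v_\alpha(\infty)=0$.

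\textbf{Monotonicity and uniqueness.} For any $\alpha_1<\alpha_2$ with $u_{\alpha_i}$ both defined on $[0,R)$, set $w=u_{\alpha_2}-u_{\alpha_1}$; then $w(0)=w'(0)=0$, $\Delta w(0)=\alpha_2-\alpha_1>0$ and $w(r)=\tfrac{\alpha_2-\alpha_1}{8}r^2+o(r^2)$ near $r=0$, so $w>0$ on a right neighbourhood of $0$. On any interval where $w\ge 0$, $e^{4u_{\alpha_2}}\ge e^{4u_{\alpha_1}}$ gives $(r^3(\Delta w)')'\ge 0$ and hence $\Delta w\ge\alpha_2-\alpha_1$, which in turn forces $(r^3 w')'>0$, $w'>0$ and $w$ strictly increasing; this precludes $w$ from returning to $0$. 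Therefore $w>0$ and $\Delta w\ge\alpha_2-\alpha_1>0$ throughout $[0,R)$. This immediately yields uniqueness (two normal solutions would contradict $\Delta u_i\to 0$ at infinity) and also the strict monotonicity of $\alpha\mapsto v_\alpha(\infty)$ in $\R\cup\{+\infty\}$.

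\textbf{Existence.} I partition the shooting space into $A_-:=\{\alpha:R_\alpha=\infty,\ v_\alpha(\infty)<0\}$, $A_+:=\{\alpha:v_\alpha$ becomes positive on $[0,R_\alpha)\}$, and $A_0:=\R\setminus(A_-\cup A_+)$. For $\alpha\ll 0$, the bootstrap $v_\alpha\le \alpha/2\Rightarrow u_\alpha(r)\le\rho+\tfrac{\alpha}{16}r^2$ (via \eqref{identity}) produces Gaussian decay of $e^{4u_\alpha}$ and a total increment $v_\alpha(\infty)-\alpha=O(|\alpha|^{-2})$, closing the bootstrap and placing $\alpha$ in $A_-$. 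Symmetrically, once $v_\alpha(r_0)>0$, monotonicity of $v_\alpha$ and \eqref{identity} yield $u_\alpha(r)\gtrsim r^2$ for $r$ large, producing super-exponential growth of $(1+r^p)e^{4u_\alpha}$ and finite-radius blow-up, so $(0,\infty)\subset A_+$. Both sets are open (Gaussian decay, respectively blow-up, are stable under small perturbations of $\alpha$), hence by connectedness of $\R$ the set $A_0$ is non-empty; the strict monotonicity of $v_\alpha(\infty)$ from the previous step then gives $A_0=\{\alpha^*\}$. By construction $u_{\alpha^*}$ is global, $v_{\alpha^*}\le 0$, and $v_{\alpha^*}(\infty)=0$.

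\textbf{Normality of $u_{\alpha^*}$.} The remaining step is to pass from $v_{\alpha^*}(\infty)=0$ to $(1+|x|^p)e^{4u_{\alpha^*}}\in L^1(\R^4)$ and to the integral representation \eqref{eqIbis}. From $r^3 v_{\alpha^*}'(r)=\tfrac{1}{\omega_3}\int_{B_r}(1+|y|^p)e^{4u_{\alpha^*}}dy$ and $\int_0^\infty v_{\alpha^*}'(s)ds=-\alpha^*<\infty$, a scaling-consistency argument rules out $\Lambda=\infty$ (the only self-consistent asymptotic profile compatible with $v(\infty)=0$ is the logarithmic one $u(r)\sim -\tfrac{\Lambda}{8\pi^2}\log r$, which requires $\Lambda<\infty$); alternatively, approaching $\alpha^*$ from $A_-$ and using uniform Pohozaev-type bounds for the corresponding $\Lambda_\alpha$, Fatou's lemma gives $\Lambda_{\alpha^*}<\infty$. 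A final spherical-averaging computation identifies $u_{\alpha^*}$ with the right-hand side of \eqref{eqIbis} with constant $c=\rho$, completing the proof.

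The main obstacle will be this last step: the Gaussian-versus-logarithmic dichotomy at the critical shooting parameter needs to be handled carefully to guarantee integrability, and the openness of $A_-$ requires an estimate showing that the ``tail'' contribution to $v_\alpha(\infty)$ depends continuously on $\alpha$, even though the domain is unbounded.
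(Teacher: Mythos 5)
Your shooting argument is sound in outline but is a genuinely different route from the paper's. The paper constructs the solution by a Schauder fixed--point argument for the regularized weight $(1+|x|^p)e^{-\ve|x|^2}$ in the weighted space $\|v\|_X=\sup|v|/\log(2+|x|)$, with the a priori bound coming from the monotonicity $\bar v\le\bar v(0)=\rho$; it then gets a uniform bound $\Lambda_\ve<(1+\tfrac p4)\Ls$ from the Pohozaev identity and passes to the limit $\ve\to0$ by the compactness argument of Lemma \ref{lemconv}. The advantage of that construction is that normality is built in: the fixed point \emph{is} the integral operator, so the representation \eqref{eqI} and the finiteness of $\Lambda$ never have to be verified a posteriori. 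Your ODE shooting in $\alpha=\Delta u(0)$ is more elementary (no function spaces, no regularization) and your uniqueness step is essentially identical to the paper's one-line appeal to monotone dependence on initial data, here usefully spelled out; the price is precisely the final step you flag.

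Two remarks on that final step. First, the ``scaling-consistency'' claim that $v_{\alpha^*}(\infty)=0$ forces $\Lambda<\infty$ is correct but needs the actual argument (it is the same dichotomy the paper uses in Lemma \ref{lem-mu0} and Proposition \ref{propfinite}): if $\Gamma(r):=\int_{B_r}(1+|y|^p)e^{4u}dy\to\infty$, then $\Delta u(r)=-\int_r^\infty\frac{\Gamma(t)}{\omega_3t^3}dt\le-\frac{M}{r^2}$ for every $M$ and $r$ large, whence by \eqref{identity} $u(r)\le-\tfrac M2\log r$ eventually, which makes $(1+|x|^p)e^{4u}$ integrable --- a contradiction. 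Note that finiteness of $\int_0^\infty v'\,ds$ alone does \emph{not} give $\Lambda<\infty$ (take $\Gamma(s)\sim\log s$). Second, your alternative route via ``uniform Pohozaev-type bounds'' for $\alpha\in A_-$ is problematic as stated: those solutions have $u_\alpha(r)\sim \tfrac{c_0}{8}r^2$ with $c_0<0$, so they are not normal and \eqref{general Poho} does not apply to them without reworking the boundary terms. Stick with the first route, and finish by the biharmonic Liouville argument (as at the end of the paper's Section 7): with $\Lambda<\infty$ the difference $h$ between $u_{\alpha^*}$ and the potential $\frac{1}{8\pi^2}\int\log\frac{|y|}{|x-y|}Ke^{4u}dy$ is radial and biharmonic with $\Delta h\equiv\mathrm{const}=\lim\Delta h=0$, so $h\equiv\rho$; with this kernel the additive constant is indeed $\rho$, whereas for the $\log\frac{1}{|x-y|}$ kernel of \eqref{eqIbis} it is not.
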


\begin{proof}
For every $\ve>0$ we claim that there exists a radial normal solution to
\begin{equation}\label{veps}
\D^2 v_\ve=(1+|x|^p)e^{-\ve |x|^2}e^{4v_\ve}\quad\text{in }\R^4,\quad v_\ve(0)=\rho.
\end{equation}
To this end, we set
$$X=\{v\in C^0_{rad}(\R^4):\|v\|_X<\infty\},\quad \|v\|_X:=\sup_{x\in\R^4}\frac{|v(x)|}{\log(2+|x|)},$$
and define the operator $T_\ve:X\to X$, $T_\ve v=\bar v$, where $$\bar v(x):=\frac{1}{8\pi^2}\int_{\R^4}\log\left(\frac{|y|}{|x-y|}\right)(1+|y|^p)e^{-\ve|y|^2}e^{4v(y)}dy+\rho.$$
By the Arzerl\`a-Ascoli theorem it follows that $T_\ve$ is compact. 

Notice that $\Delta \bar v<0$, hence $\bar v$ is monotone decreasing by \eqref{identity}, and
$$\bar v\leq \bar v(0)=\rho.$$
In particular, if $v$ is a solution to $v=tT_\ve (v)$ with $0<t\leq 1$, then $$|v(x)|\leq  t\rho+\frac{te^{4\rho}}{8\pi^2}\int_{\R^4}\left|\log\left(\frac{|y|}{|x-y|}\right)\right|(1+|y|^p)e^{-\ve|y|^2} dy\leq C\log(2+|x|),$$ where $C>0$ is independent of $v$ and $t$. Then, by the Schauder fixed-point theorem, $T_\ve$ has a fixed point, which we call $v_\ve$, and which is a radial normal solution to \eqref{veps} by definition, so the claim is proven.
 
Setting
$$\Lambda_\ve=\int_{\R^4}(1+|x|^p)e^{-\ve|x|^2}e^{4 v_\ve(x)}dx,$$
and using the Pohozaev identity (Proposition A.1 in \cite{HMM} or a minor modification of  Proposition \ref{poho-3}, with $K(x)=(1+|x|^p)e^{-\ve|x|^2}$), we get
 $$\frac{\Lambda_\ve}{\Ls}(\Lambda_\ve-\Ls)<\frac{p}{4}\int_{\R^4}|x|^pe^{-\ve|x|^2}e^{4 v_\ve(x)}dx<\frac p4\Lambda_\ve,$$
 which implies
 $$\Lambda_\ve<\(1+\frac p4\)\Ls.$$
 Now one can follow the proof of Lemma \ref{lemconv} to conclude that $v_\ve\to v$, where $v$ is  a normal solution with $v(0)=\rho$. 
 
The uniqueness follows by the monotonicity property of solutions to ODEs with respect to the initial data.  
\end{proof}

Notice that the result of Theorem \ref{thm-positive} does not hold in the case of Problem \eqref{eq-0}, see Lemma \ref{ukgeqC}.

\begin{lem}\label{T6L1} Let $u$ be a normal solution to \eqref{eq-positive} for some $\Lambda>0$. Then we have
\begin{equation}\label{u+low}
u(x)\geq -\frac{\Lambda}{8\pi^2}\log|x|+O(1)\quad\text{as }|x|\to\infty,
\end{equation}
hence $\Lambda>\Lambda_{*,p}$.
\end{lem}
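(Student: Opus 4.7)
\noindent The lemma has the flavour of a reverse Lemma~\ref{lemasymp}: the upper bound there used $K\le 0$ at infinity to discard the positive values of $\log(|y|/|x-y|)$, whereas here $K=1+|\cdot|^p\ge 0$ lets us discard them on the ``good'' piece of the domain. The plan is to work directly from the integral representation \eqref{eqI}. Evaluating at $x=0$ gives $c=u(0)$, so it suffices to bound
$$u(x)-u(0)=\frac{1}{8\pi^2}\int_{\R^4}\log\frac{|y|}{|x-y|}(1+|y|^p)e^{4u(y)}\,dy$$
from below as $|x|\to\infty$.

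For $|x|\ge 4$ I would decompose $\R^4=A_1\cup A_2\cup A_3$ with $A_1=B_{|x|/2}(x)$, $A_2=B_1(0)$, and $A_3$ the complement. On $A_1$, the reverse triangle inequality gives $|y|\ge |x|/2\ge |x-y|$, so $\log(|y|/|x-y|)\ge 0$ and, since $K\ge 0$, this piece contributes a nonnegative amount. On $A_2$ one has $\log(|y|/|x-y|)\ge \log|y|-\log(2|x|)$; since $\log|y|\in L^1(B_1)$ and $Ke^{4u}$ is bounded on $B_1$ (a normal solution is continuous on compacts), this piece contributes at least $-C-(\log|x|)\int_{A_2}Ke^{4u}\,dy$. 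On $A_3$, where $|y|\ge 1$, the crude estimate $|x-y|\le|x|+|y|\le 2|x||y|$ gives $\log(|y|/|x-y|)\ge-\log(2|x|)$, so this piece contributes at least $-(\log|x|+O(1))\int_{A_3}Ke^{4u}\,dy$. Summing the three contributions and using $\int_{A_2\cup A_3}Ke^{4u}\le\Lambda$ yields exactly \eqref{u+low}.

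Once \eqref{u+low} is established, the second assertion is immediate: we obtain $e^{4u(x)}\ge C|x|^{-\Lambda/(2\pi^2)}$ for $|x|$ large, hence $(1+|x|^p)e^{4u(x)}\ge C|x|^{p-\Lambda/(2\pi^2)}$; integrability over $\R^4\setminus B_1$ then forces $p-\Lambda/(2\pi^2)<-4$, i.e., $\Lambda>(4+p)2\pi^2=\Lambda_{*,p}$, with strict inequality since equality would leave a non-integrable tail. The only mildly delicate point is the $A_2$ estimate, which requires a uniform bound on $Ke^{4u}$ over $B_1$ together with the integrability of $\log|y|$ there; both are routine consequences of the integral representation, and I do not expect any substantial obstacle.
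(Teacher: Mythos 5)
Your proposal is correct and is essentially the paper's own argument: the authors simply say that \eqref{u+low} ``follows as in Lemma \ref{lemasymp} by changing the sign of $K$,'' and your three-region decomposition $A_1=B_{|x|/2}(x)$, $A_2=B_1$, $A_3$ the rest, with the $A_1$ contribution now discarded because it is nonnegative, is exactly that sign-reversed argument, while your derivation of $\Lambda>\Lambda_{*,p}$ from the integrability of $|x|^p e^{4u}$ matches the paper's second sentence. No gaps worth flagging.
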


\begin{proof} 
The proof of \eqref{u+low} follows as in Lemma \ref{lemasymp} by changing the sign of $K$. Using that $|\cdot|^pe^{4u}\in L^1(\R^4)$ together with \eqref{u+low} we then infer that $\Lambda>(1+\frac p4)\frac{\Ls}{2}=\Lambda_{*,p}$.
\end{proof} 

\begin{lem}\label{T6L2} Let $u$ be a normal solution to \eqref{eq-positive} for some $\Lambda>0$. Then we have
\begin{equation}\label{u+upp}
u(x)\le -\frac{\Lambda}{8\pi^2}\log|x|+o(\log|x|)\quad\text{as }|x|\to\infty,
\end{equation}
\end{lem}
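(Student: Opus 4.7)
My plan is to restate the desired inequality as
\[
r(x):=u(x)+\frac{\Lambda}{8\pi^2}\log|x|-c=\frac{1}{8\pi^2}\int_{\R^4}\log\frac{|x|\,|y|}{|x-y|}\,f(y)\,dy=o(\log|x|),\qquad |x|\to\infty,
\]
where $f(y):=(1+|y|^p)e^{4u(y)}$ has total mass $\Lambda$ and $c$ is the constant in the integral representation \eqref{eqI} of $u$. Given $\ve>0$ I first fix $R\ge 1$ with $\int_{|y|>R}f\,dy<\ve$ (possible since $f\in L^1$) and split $\R^4$ into the four regions $A_1=B_R$, $A_2=B_{|x|/2}\setminus B_R$, $A_3=B_{2|x|}\setminus B_{|x|/2}$, $A_4=\R^4\setminus B_{2|x|}$.

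Routine kernel estimates in each region handle everything except one local term. On $A_1$, $\log(|x||y|/|x-y|)$ converges uniformly to $\log|y|$, contributing $O(1)$. On $A_2$, $|x-y|\ge|x|/2$ forces $\log(|x||y|/|x-y|)\le\log(2|y|)\le\log|x|+O(1)$, which against the $f$-mass $<\ve$ produces $\le\ve\log|x|+O(1)$. On $A_4$, $|x-y|\ge|y|/2$ gives a kernel $\le\log(2|x|)$ against a vanishing $f$-mass, hence $o(\log|x|)$. On $A_3\setminus B_1(x)$, $|x-y|\ge 1$ gives a kernel $\le 2\log|x|+O(1)$, again against vanishing mass, hence $o(\log|x|)$. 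Everything thus reduces to showing that the local contribution
\[
J(x):=\int_{B_1(x)}\log\frac{1}{|x-y|}\,f(y)\,dy
\]
satisfies $J(x)=o(\log|x|)$, which I expect to be the main obstacle.

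For this local bound, my plan is a two-step bootstrap. First, by the very same domain-splitting (but retaining only coarse estimates), I would derive a preliminary upper bound of the form $u(y)\le A_0\log|y|+O(1)$ for $|y|$ large, with some $A_0\in\R$; combining this with $\int(1+|y|^p)e^{4u}\,dy<\infty$ and with the lower bound from Lemma \ref{T6L1} forces $A_0<-1-p/4$ (otherwise $f(y)\gtrsim |y|^{p+4A_0}$ would fail to be integrable at infinity). Second, once such a bound is available, $f(y)\le C|y|^{p+4A_0}$ with exponent $p+4A_0<-4$ on $\{|y|\ge 1\}$, hence $\sup_{B_1(x)}f\le C|x|^{p+4A_0}\to 0$, and consequently $J(x)\le\|\log(1/|\cdot|)\|_{L^1(B_1)}\cdot\sup_{B_1(x)}f\le C|x|^{p+4A_0}$, which tends to $0$ and is in particular $o(\log|x|)$. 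Feeding this back into the regional estimates yields $r(x)\le\frac{\ve}{8\pi^2}\log|x|+o(\log|x|)$ for every $\ve>0$, whence $r(x)=o(\log|x|)$ as required.
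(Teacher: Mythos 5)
Your reduction of \eqref{u+upp} to the local term $J(x)=\int_{B_1(x)}\log\frac{1}{|x-y|}\,f(y)\,dy$ is sound, and the outer-region estimates reproduce exactly the inequality \eqref{estimateLin} that the paper imports from Lin. The gap is in how you propose to bound $J$. Your first bootstrap step --- deriving a preliminary pointwise bound $u(y)\le A_0\log|y|+O(1)$ ``by the very same domain-splitting with coarse estimates'' --- is circular: that splitting leaves precisely $J$ uncontrolled, and $J$ is the only possible source of a large positive contribution to $u$. Knowing only $f\in L^1(\R^4)$ does not bound $J$: mass of $f$ concentrating near points $x_k$ with $|x_k|\to\infty$, paired with the logarithmic singularity of the kernel, is compatible with $J(x_k)\to\infty$, so no pointwise upper bound on $u$, for any exponent $A_0$, is available before $J$ is controlled. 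A secondary flaw: the claim that integrability of $f$ ``forces $A_0<-1-p/4$'' is not a valid deduction --- an upper bound on $u$ yields an upper bound on $f$, whereas integrability of $f$ constrains lower bounds (this is how Lemma \ref{T6L1} yields $\Lambda>\Lambda_{*,p}$); it says nothing about which upper exponents $A_0$ are admissible.

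The missing ingredient is an integral, rather than pointwise, estimate on $e^{4u}$ near $x$. From \eqref{estimateLin} and Jensen's inequality applied to the probability measure obtained by normalizing $f\,dy$ on a region of small mass, one gets, for every $q\ge1$ and $\ve'>0$,
\begin{equation*}
\int_{B_1(x)}e^{4qu}\,dy\le C|x|^{-\left(\frac{\Lambda}{2\pi^2}-\ve'\right)q}
\end{equation*}
(a Brezis--Merle type bound; see \cite[Lemma 3.5]{HMM}). Then H\"older's inequality against $\log\frac{1}{|x-\cdot|}\in L^{q'}(B_1(x))$ gives $J(x)\le C|x|^{p-\frac{\Lambda}{2\pi^2}+\ve'}\le C$, using $\Lambda>\Lambda_{*,p}>2\pi^2p$ from Lemma \ref{T6L1} to choose $\ve'$ small. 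This is the route the paper takes; your argument would close if you replace the circular pointwise bootstrap by this exponential-integrability step.
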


\begin{proof}
From the same proof of \cite[p. 213]{Lin}, we easily get that for every $\ve>0$ there exists $R(\ve)>0$ such that 
\begin{equation}\label{estimateLin}
u(x)\le \(-\frac{\Lambda}{8\pi^2}+\ve\)\log|x| +\frac{1}{8\pi^2} \int_{B_1(x)}\log \(\frac{1}{|x-y|}\) K(y)e^{4u(y)}dy, \quad |x|\ge R(\ve),
\end{equation}
where $K(y)=1+|y|^p$.
As in \cite[Lemma 3.5]{HMM}, from \eqref{estimateLin} and Jensen's inequality we get that for every $\ve'>0$ and $q\ge 1$ there is a constant $C=C(\ve',q)$ such that
$$\int_{B_1(x)}e^{4qu}dy \le C|x|^{-\(\frac{\Lambda}{2\pi^2}-\ve'\)q}.$$
With H\"older's inequality we then infer for $|x|$ large
\begin{equation}\label{T6L2E2}
\begin{split}
\int_{B_1(x)}\log \(\frac{1}{|x-y|}\) K(y)e^{4u(y)}dy&\le C|x|^p \int_{B_1(x)}\log \(\frac{1}{|x-y|}\) e^{4u(y)}dy\\
&\le C|x|^p\|e^{4u}\|_{L^q(B_1(x))}\\
&\le C|x|^{p-\frac{\Lambda}{2\pi^2}+\ve'}\le C
\end{split}
\end{equation}
since by Lemma \ref{T6L1} we have $\Lambda> \Lambda_{*,p}>\frac{p\Ls}{8}=2\pi^2 p$ and we can choose $0<\ve'< \frac{\Lambda}{2\pi^2}-p$.
Plugging \eqref{T6L2E2} into \eqref{estimateLin}, we obtain \eqref{u+upp}.
\end{proof}

\begin{lem}\label{T6L3} Let $u$ be a normal solution to \eqref{eq-positive} for some $\Lambda\in \R$. Then $u$ satisfied the Pohozaev identity \eqref{general Poho} and
\begin{equation}\label{neceb}
\max\{\Ls,\Lambda_{*,p}\}<\Lambda<2\Lambda_{*,p}.
\end{equation}
\end{lem}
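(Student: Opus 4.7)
The plan is to reduce the statement to a direct application of Proposition \ref{poho-3}, using Lemmas \ref{T6L1} and \ref{T6L2} to verify its hypotheses, and then to extract both inequalities from the resulting identity together with the elementary estimate $\int |x|^p e^{4u}dx<\Lambda$.

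First I would check the decay condition \eqref{cond-poho}. By Lemma \ref{T6L1} we already know $\Lambda>\Lambda_{*,p}=(4+p)2\pi^2$, and Lemma \ref{T6L2} gives
$$u(x)\le-\frac{\Lambda}{8\pi^2}\log|x|+o(\log|x|),\quad\text{as }|x|\to\infty,$$
so $e^{4u(x)}\le|x|^{-\Lambda/(2\pi^2)+o(1)}$ and hence
$$R^{4+p}\max_{|x|=R}e^{4u(x)}\le R^{4+p-\Lambda/(2\pi^2)+o(1)}\xrightarrow{R\to\infty}0,$$
since $\Lambda/(2\pi^2)>4+p$. Because $u$ is a normal solution we have $(1+|x|^p)e^{4u}\in L^1(\R^4)$, and in particular $|\cdot|^p e^{4u}\in L^1(\R^4)$, so all the hypotheses of Proposition \ref{poho-3} (with $K(x)=1+|x|^p$) are met. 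Invoking it yields the Pohozaev identity
$$\frac{\Lambda}{\Ls}(\Lambda-\Ls)=\frac{p}{4}\int_{\R^4}|x|^p e^{4u(x)}dx.$$

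For the lower bound I would observe that the right-hand side of this identity is strictly positive (since $p>0$ and the integrand is positive and nonzero), while $\Lambda>0$ trivially; therefore $\Lambda>\Ls$. Combining with $\Lambda>\Lambda_{*,p}$ from Lemma \ref{T6L1} gives $\Lambda>\max\{\Ls,\Lambda_{*,p}\}$. For the upper bound, I use the strict inequality
$$\int_{\R^4}|x|^p e^{4u}dx<\int_{\R^4}(1+|x|^p)e^{4u}dx=\Lambda,$$
which, inserted in the Pohozaev identity and divided by $\Lambda>0$, yields
$$\frac{\Lambda-\Ls}{\Ls}<\frac{p}{4}\quad\Longleftrightarrow\quad \Lambda<\Ls\Bigl(1+\tfrac{p}{4}\Bigr)=2\Lambda_{*,p}.$$

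I do not anticipate any real obstacle: the only step requiring care is the verification of the pointwise decay \eqref{cond-poho}, and even that is essentially immediate once Lemma \ref{T6L2} is in hand, provided one has already established the strict lower bound $\Lambda>\Lambda_{*,p}$ of Lemma \ref{T6L1}. Everything else reduces to arithmetic manipulation of the Pohozaev identity.
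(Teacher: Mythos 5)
Your proposal is correct and follows essentially the same route as the paper: verify hypothesis \eqref{cond-poho} via Lemmas \ref{T6L1} and \ref{T6L2}, invoke Proposition \ref{poho-3} with the $+$ sign, and read off both bounds from $0<\int_{\R^4}|x|^pe^{4u}dx<\Lambda$. The only difference is that you spell out the decay verification in slightly more detail, which the paper leaves implicit.
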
 
\begin{proof} From Lemma \ref{T6L1} we have $\Lambda>\Lambda_{*,p}$, which together with Lemma \ref{T6L2} implies that \eqref{cond-poho} is satisfied, hence the Pohozaev identity \eqref{general Poho} holds. From it we obtain
\begin{equation}\label{pohopositive}
\frac{\Lambda}{\Ls}(\Lambda-\Ls)=\frac{p}{4}\int_{\R^4}|x|^pe^{4u}dx.
\end{equation}
Since
$$0< \int_{\R^4}|x|^pe^{4u}dx <\Lambda,$$
\eqref{pohopositive} implies
$$\Ls<\Lambda<\(1+\frac{p}{4}\)\Ls=2\Lambda_{*,p},$$
and since we have already proven that $\Lambda>\Lambda_{*,p}$, \eqref{neceb} follows.
\end{proof}

\begin{lem}\label{ukcontinuous} Let $(u_k)$ be a sequence of radially symmetric normal solutions to \eqref{eq-positive} with $\Lambda=\Lambda_k$. If the sequence  $(u_k(0))$ is bounded, then also the sequence $(\Lambda_k)$ is bounded and, up to a subsequence, $u_k\to \bar u$, $\Lambda_k\to\bar\Lambda\in (0,\infty)$, where $\bar u$ is a normal solution to \eqref{eq-positive} with $\Lambda=\bar\Lambda.$
 \end{lem}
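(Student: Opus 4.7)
The plan is to combine local compactness with a Pohozaev identity argument to rule out loss of mass at infinity. First, the boundedness of $(\La_k)$ is automatic: by Lemma \ref{T6L3} every normal solution of \eqref{eq-positive} satisfies $\La_k\in(\max\{\Ls,\La_{*,p}\},2\La_{*,p})$, so after passing to a subsequence $\La_k\to\bar\La\in[\max\{\Ls,\La_{*,p}\},2\La_{*,p}]\subset(0,\infty)$. Since $K=1+|x|^p>0$, differentiating the integral representation of $u_k$ gives $\D u_k<0$, so by \eqref{identity} $u_k$ is radially decreasing, whence $u_k\le u_k(0)\le C$ uniformly. Then, exactly as in Lemma \ref{lemconv}, differentiating the integral equation and applying Fubini together with the uniform bound $\int_{\R^4}(1+|y|^p)e^{4u_k}dy=\La_k\le C$ yields $\int_{B_R}|\D u_k|dx\le C(R)$ and uniform local bounds on $\nabla^\ell u_k$ for $\ell=1,2,3$. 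By elliptic regularity and Arzel\`a--Ascoli, up to a subsequence $u_k\to\bar u$ in $C^3_{loc}(\R^4)$.

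Next I verify that $\bar u$ is itself a normal solution. The passage to the limit in the integral representation
$$u_k(x)=\frac{1}{8\pi^2}\int_{\R^4}\log\frac{|y|}{|x-y|}(1+|y|^p)e^{4u_k(y)}dy+u_k(0)$$
proceeds as in Lemma \ref{lemconv}: split the integral into $B_R$ and $B_R^c$, use dominated convergence on $B_R$ via the uniform upper bound $u_k\le C$, and use $|\log(|y|/|x-y|)|\le 2|x|/|y|$ for $|y|\ge 2|x|$ together with $\int_{\R^4}(1+|y|^p)e^{4u_k}dy\le C$ on $B_R^c$. Letting $k\to\infty$ then $R\to\infty$, and noting $u_k(0)\to\bar u(0)$, we obtain the integral representation for $\bar u$. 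Setting $\tilde\La:=\int_{\R^4}(1+|x|^p)e^{4\bar u}dx$, Fatou's lemma gives $\tilde\La\le\bar\La$, and since $e^{4\bar u}$ is strictly positive one has $\tilde\La>0$, so Lemma \ref{T6L3} applied to $\bar u$ gives $\tilde\La\in(\max\{\Ls,\La_{*,p}\},2\La_{*,p})$.

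The main obstacle, handled last, is to upgrade $\tilde\La\le\bar\La$ to equality, i.e.\ to exclude loss of curvature at infinity. The key observation is that both $u_k$ and $\bar u$ satisfy the hypotheses of Proposition \ref{poho-3} (via Lemmas \ref{T6L1}--\ref{T6L2}), so combining \eqref{general Poho} with the total mass identity yields
\begin{equation*}
\int_{\R^4}|x|^pe^{4u_k}dx=\frac{4\La_k(\La_k-\Ls)}{p\Ls},\qquad \int_{\R^4}e^{4u_k}dx=\frac{\La_k\bigl((p+4)\Ls-4\La_k\bigr)}{p\Ls},
\end{equation*}
with the analogous identities for $\bar u$ in terms of $\tilde\La$. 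Applying Fatou's lemma separately to $e^{4u_k}$ and to $|x|^pe^{4u_k}$ then produces $f(\tilde\La)\le f(\bar\La)$ and $g(\tilde\La)\le g(\bar\La)$, where $f(\La):=\La(\La-\Ls)$ and $g(\La):=\La\bigl((p+4)\Ls-4\La\bigr)$. A direct computation shows $f'(\La)=2\La-\Ls>0$ and $g'(\La)=(p+4)\Ls-8\La<0$ on the interval $(\La_{*,p},2\La_{*,p}]$, and both $\tilde\La,\bar\La$ lie in this interval; hence $f$ is strictly increasing and $g$ strictly decreasing there, and the two inequalities force $\tilde\La\le\bar\La$ and $\tilde\La\ge\bar\La$, so $\tilde\La=\bar\La$. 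The clever point bypassing delicate tail estimates is that the Pohozaev identity couples $\int e^{4u_k}$ with $\int |x|^pe^{4u_k}$, so any simultaneous loss of mass at infinity would violate one of these monotonicity relations.
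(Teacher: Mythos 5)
Your proof is correct, and while the first half (boundedness of $\Lambda_k$ via the Pohozaev identity, radial monotonicity, local $C^3$ compactness, passage to the limit in the integral representation, and $\tilde\Lambda\le\bar\Lambda$ by Fatou) coincides with the paper's argument, your treatment of the key step --- upgrading $\tilde\Lambda\le\bar\Lambda$ to equality, i.e.\ excluding loss of curvature at infinity --- is genuinely different. The paper performs a Kelvin transform $\tilde u_k(x)=u_k(x/|x|^2)-\frac{\Lambda_k}{8\pi^2}\log|x|$ and shows $\sup_{B_1}\tilde u_k=O(1)$ by a blow-up analysis: if $\tilde u_k(0)\to\infty$, rescaling produces a normal solution of $\Delta^2\eta=|x|^{p_\infty}e^{4\eta}$ whose quantized mass $(1+\frac{p_\infty}{4})\Ls$ (via the classification result of \cite{HMM}) forces a logarithmic lower bound on the limiting Kelvin transform that makes $|x|^{p_\infty}e^{4\tilde u}$ non-integrable near $0$, contradicting Fatou. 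You instead exploit that the Pohozaev identity pins down \emph{both} $\int_{\R^4}|x|^pe^{4u}dx=\frac{4}{p\Ls}\Lambda(\Lambda-\Ls)$ and $\int_{\R^4}e^{4u}dx=\frac{1}{p\Ls}\Lambda\((p+4)\Ls-4\Lambda\)$ as explicit functions of $\Lambda$; applying Fatou separately to the two integrands and noting that the first function is strictly increasing and the second strictly decreasing on $(\Lambda_{*,p},2\Lambda_{*,p}]$ (where $\tilde\Lambda$ lies strictly by Lemma \ref{T6L3} and $\bar\Lambda$ lies by the limit of the bounds on $\Lambda_k$) forces $\tilde\Lambda=\bar\Lambda$. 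I checked the monotonicity: $g'(\Lambda)=(p+4)\Ls-8\Lambda<0$ exactly for $\Lambda>\Lambda_{*,p}$, and $f'(\Lambda)=2\Lambda-\Ls>0$ there, so the pinching is valid, and there is no circularity since Lemmas \ref{T6L1}--\ref{T6L3} apply to $\bar u$ once its normality is established. Your route is shorter and more elementary, bypassing the Kelvin-transform blow-up machinery and the classification theorem of \cite{HMM} entirely; it is in the same spirit as the paper's own use of \eqref{V1}--\eqref{V2} in Lemma \ref{limsupuk}. What the paper's argument buys in exchange is finer information (a uniform bound on the Kelvin transforms near the origin, i.e.\ a uniform decay rate of $u_k$ at infinity), which is of independent interest but not needed for the statement of the lemma.
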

 
\begin{proof} By Lemma \ref{T6L3} the Pohozaev identity \eqref{general Poho} holds, hence
$$\frac{\Lambda_k}{\Ls}\left(\Lambda_k-\Ls\right)=\frac p4\int_{\R^4}|x|^pe^{4u_k}dx<\frac p4 \Lambda_k,$$
which then implies
$$\Lambda_k<\(1+\frac p4\)\Ls=2\Lambda_{*,p}.$$
Following the proof of Lemma \ref{lemconv} (with inequality reversed in the last line because of the positivity of $K$) we obtain that, up to a subsequence $u_k\to \bar u$, normal solution to \eqref{eq-positive} for some $\Lambda =\tilde \Lambda\le \bar \Lambda$.

To prove that $\tilde \Lambda =\bar \Lambda$ it suffices to show that
\begin{equation}\label{49}
\lim_{R\to\infty}\lim_{k\to\infty}\int_{B_R^c}|x|^pe^{4u_k}dx=0.
\end{equation}
Upon the Kelvin transform
$$\tilde u_k(x):=u_k\(\frac {x}{|x|^2}\)-\frac{\Lambda_k}{8\pi^2}\log|x|,$$
\eqref{49} is equivalenti to
\begin{align}\label{50}
\lim_{r\to0}\lim_{k\to\infty}\int_{B_r}|x|^{p_k}e^{4\tilde u_k}dx=0,\quad p_k:=\frac{\Lambda_k}{2\pi^2}-p-8.
\end{align}
By Proposition \ref{PKelvin}, $\tilde u_k$ is a normal solution to
$$\Delta^2 \tilde u_k=\(1+|x|^p\)|x|^{p_k}e^{4\tilde u_k},\quad \int_{\R^4}\(1+|x|^p\)|x|^{p_k}e^{4\tilde u_k}dx=\Lambda_k.$$
Since $\bar u$ is a normal solution, by Lemma \ref{T6L1} we have $\tilde \Lambda>\Lambda_{*,p}$. Therefore $\Lambda_{*,p}<\tilde \Lambda\leq \bar \Lambda$, which implies that for some $\delta>0$ and $k$ large, we have $p_k\geq -4+\delta$. Hence, \eqref{50} will follow if we show that 
\begin{equation}\label{77}
\lim_{k\to\infty}\sup_{B_1}\tilde u_k<\infty.
\end{equation}
First we note that by differentiating the integral formula of $\tilde u_k$ we obtain $\Delta u_k<0$, hence by \eqref{identity} we have that $\tilde u_k$ is monotone decreasing. Therefore, if $\tilde u_k(0)\to\infty$, then up to a subsequence, the   rescaled function $$\eta_k(x)=\tilde u_k(r_kx)-\tilde u_k(0),\quad r_k:= e^{-\frac{4}{4+p_k}\tilde u_k(0)},$$
which is a normal solution to
$$\Delta^2 \eta_k=(1+o(1))|x|^{p_k}e^{4\eta_k},$$
with $o(1)\to 0$ locally uniformly, converges to a limit function $\eta$ (see the proof of Proposition 4.1 in \cite{HMM} for details),  where $\eta$ is a normal solution to
$$\D^2 \eta=|x|^{p_\infty}e^{4\eta},\quad p_\infty:=\lim_{k\to\infty}p_k>-4.$$   Then by  \cite[Theorem 1]{HMM} we have
$$ \int_{\R^4}|x|^{p_\infty}e^{4\eta} dx=\(1+\frac{p_\infty}4\)\Ls.$$
Then, from Fatou's lemma we have
\begin{equation}\label{78}
\lim_{r\to0}\lim_{k\to\infty}\int_{B_r}|y|^{p_k}e^{4\tilde u_k}dy\geq  \(1+\frac{p_\infty}4\)\Ls.
\end{equation}
Moreover, as in the proof of Lemma \ref{lemLambdabar} we estimate
\begin{align*}
\tilde u_k(x)&\geq\frac{1}{8\pi^2}\int_{B_1}\log\(\frac{1}{|x-y|}\)|y|^{p_k}e^{4\tilde u_k(y)}dy+O(1),\quad x\in B_1.
\end{align*}
Since $\tilde u_k\to\tilde{u}$ outside the origin, where $\tilde u=\tilde{\bar u}$ is the Kelvin transform of $\bar u$, together with \eqref{78}
we get that
\begin{align*}
\tilde{u}(x)&\geq \frac{1}{8\pi^2}\(1+\frac{p_\infty}{4}\)\Ls\log\(\frac{1}{|x|}\)+O(1)\\
&=2\(1+\frac{p_\infty}{4}\)\log\(\frac{1}{|x|}\)+O(1),\quad x\in B_1.
\end{align*}
In particular, as $p_\infty>-4$, we have $$|x|^{p_\infty}e^{4\tilde u(x)}\geq \delta |x|^{-4}\quad x\in B_1,$$ for some $\delta>0$. This shows that  $|x|^{p_\infty}e^{4\tilde u(x)}\not\in L^1(B_1)$, however, by Fatou's lemma $$\int_{B_1}|x|^{p_\infty}e^{4\tilde u(x)}dx\leq \liminf_{k\to\infty }\int_{B_1}|x|^{p_k}e^{4\tilde u_k(x)}\leq \liminf_{k\to\infty}\Lambda_k\leq 2\Lambda_{*,p}.$$
This contradiction completes the proof of \eqref{77}, hence of the lemma.
\end{proof}

\noindent\emph{Proof of Theorem \ref{thm-positive2} (completed).} We have already proven the necessary conditions \eqref{nece0}-\eqref{nece} in Lemma \ref{T6L3}, so it remains to prove the existence part and the necessary condition \eqref{nece2} in the radial case with $p>4$.

By Lemma \ref{ukcontinuous}, the map
$$\R\ni\rho\mapsto \Lambda_\rho:=\int_{\R^4}(1+|x|^p)e^{4u_\rho}dx$$
is continuous, where $u_\rho$ is the solution to \eqref{eq-rho} given by Theorem \ref{thm-positive}.

We now have
\begin{equation}\label{T6E1}
\lim_{\rho\to -\infty}\Lambda_\rho= \(1+\frac p4\)\Ls=2\Lambda_{*,p},
\end{equation}
which is a consequence of \eqref{V1} and
$$\int_{\R^4}|x|^pe^{4u_\rho}dx\leq C\quad \Rightarrow\quad\int_{\R^4}e^{4u_\rho}dx\to0.$$ Taking $\rho\to\infty$ we see that the blow-up around the origin is spherical (see e.g. \cite{MarOpen}), and
$$\lim_{\rho\to+\infty}\int_{\R^4}e^{4u_\rho}dx= \Ls.$$
Again by \eqref{V1}, and as $\Lambda_\rho>\max\{\Ls,\Lambda_{*,p}\}$, we conclude that \begin{equation}\label{T6E2}
\lim_{\rho\to\infty }\Lambda_\rho=\max\left\{\Ls,\frac p4\Ls\right\}.
\end{equation}
Then, by continuity, we have existence for every $\max\{\Ls,\Lambda_{*,p}\}<\Lambda<2\Lambda_{*,p}$. 

\medskip

It remains to prove the stronger necessary condition \eqref{nece2} for $p>4$ in the radial case. 

Assume by contradiction that for a sequence $(\Lambda_k)$ with $\Lambda_k\downarrow \Lambda_{*,p}$ there are radial solutions $u_k$ to \eqref{eq-positive} with $\Lambda=\Lambda_k$. Since
$$\Lambda_{*,p}<\frac{p}{4}\Ls<2\Lambda_{*,p},$$
from \eqref{T6E1}-\eqref{T6E2} we obtain that the sequence $(u_k(0))$ is bounded. Then by Lemma \ref{ukcontinuous} we have that (up to a subsequence) $u_k\to \bar u$ locally uniformly, where $\bar u$ is a normal solution to \eqref{eq-positive} with $\Lambda=\Lambda_{*,p}$, and this contradicts Lemma \ref{T6L3}.
\endproof


\section{Proof of Proposition \ref{prop1}}\label{Sec:Prop2.1}

By \cite[Theorem 2.1]{CC} (and its proof), setting $K_\lambda= (\lambda-|x|^p)e^{-|x|^2}$ and given $\mu=1-\frac{\Lambda}{\Ls}\in (0,1)$ one can find a solution $u_\lambda$ to
$$\Delta^2 u_\lambda = K_\lambda e^{4u_\lambda}\quad \text{in }\R^4,$$
such that
$$\int_{\R^4}K_\lambda e^{4u_\lambda} dx=(1-\mu)\Ls=\Lambda.$$ 
Moreover $u_\lambda$ is of the form $u_\lambda= w\circ \Pi^{-1} +(1-\mu)\eta_0$ where $\eta_0(x)=\log\(\frac{2}{1+|x|^2}\)$, $\Pi:S^4\to\R^4$ denotes the stereographic projection, and $w\in H^2(S^4)$ minimizes a certain functional on $S^4$. This leads to the Euler-Lagrange equation
$$P^4_{g_0}w +6(1-\mu)= (K_\lambda \circ\Pi) e^{-4\mu (\eta_0\circ \Pi)} e^{4w},$$
where $P^4_{g_0}=\Delta_{g_0}(\Delta_{g_0}-2)$ is the Paneitz operator on $S^4$ with respect to the round metric $g_0$. Since $(K_\lambda \circ\Pi) e^{-4\mu (\eta_0\circ \Pi)}\in L^\infty(S^4)$, and $e^{4w}\in L^q(S^4)$ for every $q\in [1,\infty)$ by the Moser-Trudinger inequality, by elliptic estimates, we obtain $w\in C^{3,\alpha}(S^4)$ for $\alpha\in (0,1)$. In particular $w$ is continuous at the South pole $S=(0,0,0,0,-1)$ (the singularity of the stereographic projection), hence
$$u_\lambda(x) =  (1-\mu)\eta_0(x) + w(S) +o(1)= \frac{\Lambda}{8\pi^2}\log|x|+C +o(1)\quad \text{as }|x|\to\infty.$$
Now, setting
$$v_\lambda=\frac{1}{8\pi^2}\int_{\R^4}\log\(\frac{1}{|x-y|}\) K_\lambda(y)e^{4u_\lambda(y)}dy $$
we observe that $h_\lambda:=u_\lambda-v_\lambda$ satisfies
$$\Delta^2 h_\lambda=0,\quad h_\lambda(x)=O(\log|x|)\quad \text{as }|x|\to\infty.$$
Hence by the Liouville theorem we get $h_\lambda = const$. In particular $u_\lambda$ is a normal solution, i.e. it satisfies \eqref{prop1eq1}. 
This completes the proof of Proposition \ref{prop1}. \hfill$\square$

\end{document}